\newcommand{\ds}{\displaystyle}
\newcommand{\cA}{{\mathcal{A}}}
\newcommand{\cH}{\mathcal{H}}
\newcommand{\cD}{\mathscr{D}}
\newcommand{\cE}{{\mathcal{E}}}
\newcommand{\cF}{\mathcal{F}}
\newcommand{\Om}{\Omega}
\newcommand{\om}{\omega}
\theoremstyle{plain}
\newtheorem{theorem}{Theorem}[section]
\newtheorem{lemma}[theorem]{Lemma}
\newtheorem{proposition}[theorem]{Proposition}
\newtheorem{definition}{Definition}
\theoremstyle{remark}
\newtheorem{remark}{Remark}[section]
\numberwithin{equation}{section}
\numberwithin{theorem}{section}
\numberwithin{remark}{section}
\numberwithin{assumption}{section}
\numberwithin{condition}{section}
\definecolor{cut}{rgb}{.7,0,.8}
\newcommand{\bbA}{\mathbb{A}}
\newcommand{\bbR}{\mathbb{R}}
\newcommand{\cK}{\mathcal{K}}
\newcommand{\cM}{\mathcal{M}}
\newcommand{\cS}{\mathcal{S}}
\newcommand{\scrH}{\mathscr{H}}
\newcommand{\al}{\alpha}
\newcommand{\bet}{\beta}
\newcommand{\del}{\delta}
\newcommand{\gam}{\gamma}
\newcommand{\eps}{\varepsilon}
\newcommand{\lam}{\lambda}
\renewcommand{\th}{\theta}
\newcommand{\sig}{\sigma}
\newcommand{\zet}{\zeta}
\newcommand{\txtin}{\quad\text{in}\quad}
\newcommand{\txton}{\quad\text{on}\quad}
\newcommand{\txtfor}{\quad\text{for}\quad}
\newcommand{\txtand}{\quad\text{and}\quad}
\newcommand{\txtwith}{\quad\text{with}\quad}
\newcommand{\txtforall}{\quad\text{for all}\quad}
\newcommand{\Rn}{\mathbb{R}^n}
\newcommand{\cl}{\overline}
\newcommand{\half}{\frac{1}{2}}
\newcommand{\fourth}{\frac{1}{4}}
\newcommand{\dom}{\mathscr{D}}
\newcommand{\tl}{\tilde}
\newcommand{\p}{\partial}
\newcommand{\dfn}{\mathrel{\mathop:}=}
\newcommand{\intT}{\int_0^T}
\newcommand{\ltom}{{L^2(0,L)}}
\newcommand{\iffq}{\quad\Leftrightarrow\quad}
\newcommand{\const}{\text{const}\,}
\renewcommand{\lg}{\langle}
\newcommand{\rg}{\rangle}
\newcommand{\vecn}[1]{\begin{bmatrix} #1 \end{bmatrix}}
\title{A Cantilevered Extensible Beam in Axial Flow:\\ Semigroup Well-posedness and Post-flutter Regimes}
 \author{\normalsize \begin{tabular}[t]{c@{\extracolsep{.6em}}c@{\extracolsep{.6em}}c}
      Jason S. Howell &  Daniel Toundykov & Justin T. Webster  \\
\it  Carnegie Mellon Univ. ~~~& \it Univ.~of Nebraska--Lincoln    &\it ~~~ Univ.~of Maryland, Baltimore County \\
\it Pittsburgh, PA & \it Lincoln, NE &\it Baltimore, MD\\
\texttt{howell4@cmu.edu} & \texttt{dtoundykov@unl.edu} &  
\texttt{websterj@umbc.edu}
\end{tabular}}
\begin{document}
\maketitle

\begin{abstract} {\noindent
We consider a cantilevered (clamped-free) beam in an axial potential flow. Certain flow velocities may bring about a bounded-response instability in the structure, termed {\em flutter}. As a preliminary analysis, we employ the theory of {\em large deflections} and utilize a piston-theoretic approximation of the flow for appropriate parameters,  yielding a nonlinear (Berger/Woinowsky-Krieger) beam equation with a non-dissipative RHS. As we obtain this structural model via a simplification, we arrive at a nonstandard nonlinear boundary condition that necessitates careful well-posedness analysis. We account for rotational inertia effects in the beam and discuss technical issues that necessitate this feature. 

We demonstrate nonlinear semigroup well-posedness of the model with the rotational inertia terms. For the case with no rotational inertia, we utilize a Galerkin approach to establish existence of weak, possibly non-unique, solutions. For the former, inertial model, we prove that the associated non-gradient dynamical system 
has a compact global attractor. 
Finally, we study stability regimes and  {\em post-flutter} dynamics (non-stationary end behaviors) using numerical methods for models with, and without, the rotational inertia terms. 
  \\[.3cm]
\noindent {\bf Key terms}: Krieger beam, cantilever, flutter, aeroelasticity, semigroup, global attractor, finite elements
 \\[.3cm]
\noindent {\bf MSC 2010}: 74F10, 37L05, 35B41, 74B20}
\end{abstract}

\maketitle

\section{Introduction}
Axial flow flutter of a beam or plate is a topic of great recent interest in the engineering literature \cite{b:paidoussis:98,tan-yam-dow:03, tan-pai:07:JSV,tan-pai:08:JSV2,tan-pai:08:JSV,langre1,langre2}, as well as, very recently, the mathematical literature \cite{bal} (and many references therein). In general, the so called {\em flutter phenomenon} may occur when a thin elastic structure is immersed in a fluid flow. For particular flow parameters the onset (a bifurcation) of a dynamic instability resulting from the feedback coupling of the structure's natural elastic modes and the aerodynamic loading may occur. In the context of beams or thin plates, this can happen when the unperturbed flow runs {\em normal} to the principal axis of the beam---{\em normal flow}, or {\em along} the principal axis\footnote{The clamped end is known as the {\em leading edge} and the free end is known as the {\em trailing edge}.} of the beam---{\em axial flow}. The axial flow flutter of a cantilevered beam is an interesting and difficult problem. As with all {\em flutter problems}, the {\em onset} of instability can be studied from the point of view of a linear structural theory---typically as an eigenvalue problem, see \cite{jfs,vedeneev} for recent discussions. Indeed, if one can determine the dynamic load across the surface of the structure, the issue is simply to determine under what conditions this loading will ``destabilize" the natural structural modes. However, if one wishes to study the dynamics in the post-flutter regime, the analysis will require {\em some} physical nonlinear restoring force that will keep solutions {\em bounded in time} \cite{survey1,survey2}. In particular, to study flutter dynamics for a given system, the theory of nonlinear dynamical systems is often utilized, e.g.,  compact global {\em attractors}  \cite{springer, survey1, survey2}.

In the model addressed here, there is a nontrivial interaction between our standard choice of a restoring force and the incorporation of the free boundary condition at the trailing edge. Additionally, it is empirically known that, for cantilevered structures in axial flow, the onset of flutter can occur for very low flow velocities. The characteristic displacements of structures in this configuration empirically fall outside the realm of {\em large deflection} theory \cite{lagleug,inext1,inext2}.  Lastly, in the cantilevered configuration, the aerodynamics near the free end of the beam are highly nontrivial \cite{tan-pai:07:JSV,tan-pai:08:JSV2}. Thus, there are significant challenges in the modeling and analysis of flow driven cantilevered beams. 

{\em Our goal in this paper is to provide a mathematical analysis of a fluttering cantilevered beam model.} We focus our attention on a simple model, whose analysis will already be quite challenging\footnote{We do not claim that this model is the most accurate physically speaking (see Remark \ref{inext} below).}:  {\em an extensible, nonlinear beam} \cite{dickey}; we address the technical challenges associated with obtaining solutions and providing some qualitative discussion. Specifically, the presence of the nonlinearity in this configuration necessitates control of boundary traces at the trailing edge of the form $w_t(L)$---a unique feature. To address such a term rigorously we consider rotational inertia effects in the filaments of the beam, as this will provide additional smoothness of  the velocity $w_t$. Note that one can also address this issue by imposing frictional boundary damping at at $x=L$, see \cite{beam1, beam2, beam3, beam4}. Though theoretical results include rotational inertia effects, we do provide some discussion of the existence of solutions without inertia, as well as interesting numerical simulations in that scenario.

The analysis in this   treatise, apart from its novel mathematical features, serves as a valuable baseline for future studies of more complex dynamics, for instance, inextensible dynamics, as described in Remark \ref{inext} below. The theoretical and numerical work herein can be utilized in comparative studies with more complex dynamics. Such comparisons might support using the {\em much simpler} extensible model as an admissible simplification of the inextensible model in flutter studies.  
It ought also be noted that although the discussion focuses on beams, as they sufficiently demonstrate the key issues at play, much of the analysis here directly applies to two dimensional Berger plates, see Section \ref{mains}.

\begin{remark}[Inextensible dynamics]\label{inext}
Perhaps the most important distinction between cantilevered structures in axial flow, versus fully restricted structures (completely clamped or hinged) in such a flow, is the effect of extensibility \cite{lagnese}. In the case of {\em extensible} beams, transverse deflection necessarily leads to (local) stretching, which is a principal contributor to the elastic restoring force; in the case of a clamped-free condition, the engineering literature indicates that the beam should be taken to be {\em inextensible} \cite{paidoussis, inext1}. The property of inextensibility is best characterized as local arc length preservation throughout deflection. Letting $u$ and $w$ correspond respectively to the in-plane and out-of-plane (Lagrangian) deflections, the condition manifests itself in the requirement:
$\ds 1=\sqrt{\left(1+u_x\right)^2+w_x^2}.$
This relation is then typically linearized to produce what \cite{paidoussis, inext1} refer to as the {\em inextensibility} condition/constraint:
$\ds
u_x+\frac{1}{2}[w_x]^2\equiv 0.$
One can consider inextensible cantilevered beams (and plates), as has been done relatively recently in the engineering literature \cite{inext1,inext2}, {\em albeit there is no rigorous theory available}. Indeed, the lack of mathematical analysis is due to the fact that the constraint leads to complex nonlinear and nonlocal equations of motion. 
\end{remark}

\subsection{Flow effects}
Though there are various ways to consider flow-beam coupling \cite{tan-pai:07:JSV,tan-pai:08:JSV2}, perhaps the most pervasive is to eliminate the fluid dynamics entirely. Such an elimination has the benefit of reducing to a single (non-conservative) dynamics, and eliminates the issues associated with a free or moving boundary for the fluid. The elimination of the fluid can be rigorous, or at least semi-rigorous\footnote{This involves solving the fluid equation for a boundary input, given by the structural displacement, taking the trace of the resulting solution, and feeding the result into the structural dynamics as a force or pressure.} \cite{delay, springer,dowell} or by employing the assumptions of {\bf piston theory}\footnote{Also known as the {\em law of plane sections} \cite{bolotin}.} \cite{dowell, jfs, vedeneev, survey1} for a beam in a potential flow. In the case of large flow velocities, the fluid pressure $p(x,t)$ on the top surface of the structure can be approximated (point-wise) by the fluid pressure on the head of a piston moving through a column of fluid. The dynamic pressure can be written in terms of the {\em down-wash} of the fluid, $d$. This results in the following nonlinear expression:
\begin{equation}\label{actualpist}
  p_*(x,t) = p_0 - \mu \left[1+\dfrac{d}{U} \right]^{\gamma},
\end{equation}
where $d$ is the downwash of the flow, $U >1$ is the (normalized) unperturbed flow velocity, $\gamma>1$ and $\mu>0$ are physical parameters \cite{pist1,pist2}.
 Letting $w$ be the out-of-plate displacement of the beam, the downwash is given by $d= [\partial_t+U\partial_x]w$ on the surface of the beam. Simplifying \eqref{actualpist} via a linearization, and adjusting $p_0$, we obtain the following {\em piston-theoretic} pressure term:
\begin{equation}\label{linpist}
p_1(x,t)=p_0-\beta(w_t+Uw_x),
\end{equation}
which is valid for $U$ sufficiently large (for instance if $U>\sqrt 2$). Here $\beta>0$ is a physical parameter that typically depends on $U$, though, for this mathematical study we have formally decoupled $\beta$ and $U$.

\subsection{Structural models}
For the structure, we begin with a ``large deflection" model for a beam, which, in addition to standard elasticity assumptions, invokes a quadratic strain-displacement law \cite{lagleug}. This model takes into account both in-plane and out-of-plane dynamics, and is {\em extensible}, taking into account the local effect of stretching on bending. Let $(w,w_t)$ correspond to out-of-plane dynamics, and $(u,u_t)$ the in-plane dynamics. The terms $g_i(x)$ are ``edge" forces on the beam. The coefficient $\alpha \ge 0$ below represents rotational inertia in the filaments of the beam\footnote{Often, when considering plates (2-D), $\alpha$ is taken to be zero \cite{lagnese}.}. The terms $D_1, ~D_2>0$ are elastic coefficients.

  \begin{equation}\label{LLsystem}
\begin{cases} u_{tt} -D_1\left[u_x+\frac{1}{2}(w_x)^2\right]_x=0 \\
(1-\alpha\partial_x^2)w_{tt}+D_2\partial_x^4 w -D_1\left[ w_x(u_x+\frac{1}{2}w_x^2)\right]_x = p(x,t) \\
u(t=0)=u_0;~~u_t(t=0)=u_1 \\
w(t=0)=w_0;~~w_t(t=0)=w_1 \\
u(0)=0;~~D_1\left[u_x(L)+\frac{1}{2}w_x^2(L)\right]=g_1(x) \\
w(0)=w_x(0)=0;~~D_2w_{xx}(L)=0;\\
-\alpha\partial_xw_{tt}(L)+D_2\partial_x^3w(L)-D_1w_x(L)\left[u_x(L)+\frac{1}{2}w_x(L)^2\right]=g_2(x). \end{cases}
\end{equation}
\begin{remark}
 In the unscaled version of the equations $\ds D_1=\dfrac{E}{\rho}$, $\ds \alpha=\dfrac{I}{A}$, and $\ds D_2=\dfrac{EI}{\rho A}$, where $\rho$ is the mass density (per unit volume) of the beam, $I$ is the beam's moment of inertia w.r.t. the $y$-axis, $E$ is the Young's modulus, and $A$ is the cross-sectional area of the beam at rest.  
\end{remark}
The principal model under consideration here can be viewed as a simplification of the system above when we take in-plane accelerations to be negligible. Indeed, we can assume that $u_{tt} \approx 0$, and also take the edge forces to vanish $g_i \equiv 0$. Then the first equation above becomes
$$D_1\left[u_x+\frac{1}{2}(w_x)^2\right]_x=0,$$ which can be rewritten as 
$u_x+\frac{1}{2}w_x^2 = c(t).$ Integrating in $x$ from $[0,L]$, we have
$$u(L)-u(0)=c(t)L-\frac{1}{2}\int_0^L w^2_x(\xi)d\xi.$$
We can impose the assumption that the in-plane displacements at the free end of the beam must remain approximately fixed: 
$u(0,t)=0,$ and $u(L,t)=C$, where $C>0$ represents initial in-plane stretching, and $C<0$ compression. As a result, we see that $\ds c =\frac{C}{L}+ \frac{1}{L}\int_0^L w_x^2(\xi)d\xi.$ Plugging this back into \eqref{LLsystem}, we obtain the model of interest here \eqref{Bergerplate}, which we now formally introduce. 

\begin{remark}
As mentioned in \cite{beam1,beam2}, the beam equation (which did not consider a free boundary condition, nor rotational inertia) originally studied was given by \cite{wonkrieg}:
\begin{equation}\label{krieger}
w_{tt} +\dfrac{EI}{\rho}\partial_x^4 w-\left(\dfrac{H}{\rho}+\dfrac{EA}{2\rho L}\int_0^L|w_x|^2 dx\right)w_{xx}=0,
\end{equation} where $L, E, I, A,$ and $\rho$ are as before, and  $H$ is the tension per unit length in the rest position.
\end{remark}

\subsection{Model under consideration}
	
The model we consider is a clamped-free extensible beam model with rotational inertia effects, taken with a (linear) piston-theoretic RHS. For convenience we have  adjusted the parameter names.

 \begin{equation}\label{Bergerplate}
\begin{cases} (1-\alpha\partial_x^2)w_{tt}+D\partial_x^4 w +k_0(1-\alpha\partial_x^2) w_t+(b_1-b_2\|w_x\|^2)w_{xx} = p_0(x)-\beta(w_t+Uw_x) \\
w(t=0)=w_0;~~w_t(t=0)=w_1 \\
w(0)=w_x(0)=0;~~w_{xx}=0; \\
-\alpha\partial_x[w_{tt}+k_0w_t]+D\partial_x^3 w+(b_1-b_2\|w_x\|^2)w_x=0~\text{ at } x=L. \end{cases}
\end{equation}
Above, $b_1 \in \mathbb R$ represents in-plane stretching ($b_1<0$) or compression ($b_1>0$) at equilibrium; $b_2$ is the physical parameter that measures the effect of stretching on bending (i.e., the strength of the nonlinear restoring force). For our theoretical analysis below, we will mostly consider $\alpha>0$---this will be a rather indispensable condition. However, we do provide some discussion of the ``non-rotational'' case $\alpha =0$, as well as numerical investigations concerning the effect of $\alpha$. 

The term $k_0$ measures the strength of structural damping in the beam; when $\alpha=0$, this damping is purely {\em frictional} in nature, and for $\alpha>0$ it is of the {\em ``square root"} type. 

\begin{remark}[Square root damping]\label{sqrt}
Note that when rotational inertia is present, the natural structure of the damping $k_0(1-\alpha\partial_x^2)w_t$ is tailored to the inertial term $(1-\alpha\partial_x^2)w_{tt}$. For our theoretical results on long-time behavior, this is necessary, though we investigate the issue further using numerical techniques. Later, in Section \ref{numerics}, we will  decouple the damping term from the inertial operator to study them as independent parameters.
Formally the damping term $\p_x^2 w_t$ is ``half'' the order of the principal stress operator $B=\p_x^4$, accounting for the  boundary conditions. This concept can be generalized to fractional powers $[B]^\th w_t$ for $\th\in[0,1]$, which at the two extremes yield the usual viscous damping for $\th=0$ and  visco-elastic (Kelvin-Voigt) damping at $\th=1$.  The corresponding square root scenario $\th=1/2$ for a  system of elastic type was proposed in \cite{che-rus:82:QAM} as  this feedback turns out to reproduce energy decay rates empirically observed elastodynamics. This  model was thoroughly investigated in \cite{che-tri:88, che-tri:89:PJM} for elasticity, demonstrating, in particular, that the ensuing evolution semigroup is  analytic if and only if $\th \geq \half$.  The square root-like damping $\p_x^2 w_t$ also arises naturally in other beam models. Consider, for instance,  the Mead-Markus
model \cite{mea-mar:69:JSV} for a  sandwich beam \cite{fab-han:01:DCDS, raj-han:05:DCDS}. In the notation of \cite{raj-han:05:DCDS}, one may: (i) rewrite the shear $s$ as the sum of effective angle $\xi$ and $w_x$, (ii) combine the equations, (iii) compute $s_x$,  and (iv) substitute the result into the out-of-plane equation to rewrite $\xi_{xxx}$. This  explicitly recovers the term $\p_x^2 w_t$ in the out-of-plane displacement equation. Such a representation of the system was explored in \cite{han-las:00} to study stability properties and the analyticity of the semigroup.
\end{remark}

\begin{remark}[Rotational inertia]
From the physical point of view, there is  a disparity between the inclusion of the rotational inertia and the aerodynamic damping provided by piston theory: indeed, the piston-theoretic RHS provides viscous damping of the form $\beta w_t$ (with ``correct'' dissipative sign). However, as per the discussion above, if one includes rotational inertia, this damping from the flow is not appropriate to uniformly stabilize trajectories to some fixed, bounded set in the state space.
\end{remark}

In an auxiliary fashion, let us mention the standard Rayleigh beam with clamped-free conditions:
\begin{equation}\label{linearplate}
\begin{cases} (1-\alpha\partial_x^2)w_{tt}+D\partial_x^4w +k_0(1-\alpha\partial_x^2)w_t = p_0(x)-\beta(w_t+Uw_x) \\
w(t=0)=w_0;~~w_t(t=0)=w_1 \\
w(0)=w_x(0)=0;~~w_{xx}(L)=0,\quad-\alpha\partial_x[w_{tt}(L)+k_0w_t]+D\partial_x^3w(L)=0. 
\end{cases}
\end{equation}
The intrinsic parameters $D$, $L$, and $\beta$ will not be central to our analysis below. For more in-depth numerical parameter analyses see \cite{comppaper}. Thus, we consider these as fixed positive constants and will not frequently mention them hereafter. The parameter $\alpha \ge 0$ is the central parameter in our well-posedness analysis, and, the parameters $U, k_0$ and $\alpha$ are central to discussions of stability and long-time behavior of solutions. 

The onset of flutter in the applied setting can be viewed as the self-excitation of the solutions to \eqref{linearplate} (due to a bifurcation of the dynamics in the parameters $U$, $L$, or $\beta$). In studying the qualitative properties of a fluttering structure, one must invoke a nonlinear restoring force (as in \eqref{Bergerplate}) and can utilize the theory of dynamical systems. In particular, the non-stationary end behaviors of trajectories can be viewed in terms of a {\em compact global attractor} for the dynamics, if one exists. 

\begin{remark}
A natural question would ask about the stability of the nonlinear model ($b_2 \neq 0$) in the presence of {\em standard linear} clamped-free boundary conditions (as in \eqref{linearplate}). We assert that this combination is non-physical, as there is no {\em conservation of energy} associated to the dynamics. Indeed, as is shown in \cite{comppaper}, one can achieve arbitrary growth (in time) of displacements (or energies) for finite difference approximate solutions to:
\begin{equation}
\begin{cases} w_{tt}+\partial_x^4w-\|w_x\|^2w_{xx}  = 0 \\
w(t=0)=0;~~w_t(t=0)=cx \\
w(0)=w_x(0)=0;~~w_{xx}(1)=0,~~\partial_x^3w(1)=0, \end{cases}
\end{equation}
for $c$ sufficiently large. Specifically, if $c=12$, the dynamics exhibit periodic behavior; when $c=13$, the dynamics grow with exponential rate. When the boundary conditions are adjusted appropriately (i.e., $\partial_x^3w(1)=||w_x||^2w_x(1)$), the nonlinear energy $\mathcal E(t)$ (defined below) is nearly {\em perfectly conserved}.
\end{remark}

\subsection{Definitions of spaces and solutions}
Throughout the paper, the notation $(\cdot,\cdot)$ and $\|\cdot\|$ will stand for, respectively, the inner product and the norm in $\ltom$, though, when there is no ambiguity, ``$(p,q)$'' may also refer to an ordered pair. We proceed  with the (now-standard in large deflection beam theory) ``displacement'' state space:
\[
  H^2_* = \{ v \in H^2(0,L) :  v(0) =0,\quad v_x(0) = 0 \}
\]
equipped with an equivalent inner product
\begin{equation}\label{H2*prod}
   (v,w)_{H^2_*} = D (v_{xx}, w_{xx}).
 \end{equation}
 Let $R$ denote the Riesz isomorphism $H^2_*\to [H^2_*]'$ given by:
 \begin{equation}\label{def:R}
 R(v)(w) \dfn  (v,w)_{H^2_*}\,.
 \end{equation}
Note that the same framework may be obtained by introducing the operator
\[
  \cA_0 : \dom(\cA_0)\subset \ltom \to \ltom
\]
\begin{equation}\label{def:cA0}
  \cA_0 f \dfn D\p_x^4 f,\quad \dom(\cA_0) = \{ f \in H^4(0,L) : f(0) =f_x(0)=0, \; f_{xx}(L)=f_{xxx}(L) = 0\}
\end{equation}
\[
  \dom(\cA_0^{1/2}) = H^2_*, \quad \dom(\cA_0^{-1/2}) = [H^2_*]' \txtand \cA_0^{1/2} = R \quad \text{(the Riesz isomorphism \eqref{def:R})}.
\]
Then we could define $(u,v)_{H^2_*}$ as the extension of $(\cA_0 u, v)$ from $\dom(\cA_0)$ to $H^2_*$ which gives \eqref{H2*prod}. 

Next, introduce 
\[
   H^1_* = \{v \in H^1(0,L) : v(0) = 0\},
\]
Topologically we could identify $H^1_*$ with $\dom(\cA_0^{1/4})$, but for the purposes of this discussion it helps to define the  topology on $H^1_*$ as induced by a second-order elliptic operator:
\begin{equation}\label{def:C}
  C f=  -\p_{xx} f,\quad \dom(C) \dfn \{ f\in H^2(0,L) : f(0)=0,\; f_x(L) = 0\}.
\end{equation}
Then $H^1_*\cong \dom(C^{1/2})$. The associated equivalent inner product is given by $(u,v)_{H^1_*} = (Cu,v)$ if $u\in \dom(C)$, and it extends by density to 
\[
(u,v)_{H^1_*} = (u_x,v_x).
\]
We introduce the following additional notation to simplify the exposition below:
\[
  L_{\alpha} = \begin{cases} H^1_* & \text{ for }~\alpha>0  \\ L^2(0,L) & \text{ for }~\alpha =0. \end{cases}
  \]
Then define operator
\begin{equation}\label{def:Cal}
  C_\al = I+\al  C
\end{equation}
We can  define an equivalent inner product on $L_\al$ by means of
\begin{equation}\label{H1-prod}
  (u,v)_{L_\al}= (C_\al^{1/2}u, C_\al^{1/2} v), ~~ \|w\|^2_{L_{\alpha}}:=\alpha \|w\|^2_{H^1_*}+\|w\|^2_{L^2(0,L)}.
\end{equation}
Using the above spaces we equip $\scrH_{\alpha} = H^2_*\times L_\al$ with the product: $y=(y_1,y_2), z=(z_1,z_2) \in \mathscr H_{\alpha}$
\begin{equation}\label{Hal-prod}
  (y,z)_{\scrH_\al} = (y_1,z_1)_{H^2_*} + (y_2,z_2)_{L_\al}.
\end{equation}
Accordingly, the state space for the problem \eqref{Bergerplate} will depend on the presence of the rotational inertia terms (determined by the coefficient $\al$), and will be given by
  \begin{equation}\label{def:state-space}
  \scrH_{\alpha} \dfn H^2_* \times L_{\alpha},
\end{equation}
with the norm
\begin{equation}\label{def:norm}
\|(w_0,w_1)\|^2_{\mathscr H_{\alpha}} =  D\|w_{0,xx}\|^2+\alpha \|w_{1,x}\|^2 + \|w_1\|^2, ~~\alpha \ge 0.
\end{equation}
\begin{definition}[Definite energy]
\begin{equation}\label{def:energy2}
  \cl{E}(t) \dfn \half \|w_t(t)\|^2 + \frac{\al}{2} \|w_{tx}(t)\|^2 + \half D \|w_{xx}(x)\|^2 + \fourth b_2\|w_x(t)\|^4
\end{equation}
\end{definition}
\begin{definition}[Total energy]
\begin{equation}\label{def:energy}
  \cE(t) \dfn \half \|w_t(t)\|^2 +\frac{\al}{2} \|w_{tx}(t)\|^2 + \half D \|w_{xx}\|^2 + \fourth b_2\|w_x\|^4 - \half b_1\|w_x\|^2.
\end{equation}
\end{definition}
The above two energy functionals obey the following  estimates:
\begin{proposition}[Energy comparison]\label{prop:compare}
If $b_1< 0$, then
\begin{align}\label{energy-compare1}
  \half \cE(t) & - \frac{b_1^2}{8b_2}\leq  \cl{E}(t)  \leq \cE(t),~~b_1 <0\\[.2cm]
\label{energy-compare2}
\cE(t) & \leq \cl{E}(t) \leq 2 \cE(t) + b_1^2/b_2,~~b_1>0.
\end{align}
\end{proposition}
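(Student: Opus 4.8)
The plan is to reduce both one-sided estimates to an elementary scalar inequality in the single quantity $s(t):=\|w_x(t)\|^2\ge 0$, exploiting that the two functionals share a common nonnegative part. Writing $K(t):=\half\|w_t(t)\|^2+\frac{\al}{2}\|w_{tx}(t)\|^2+\half D\|w_{xx}(t)\|^2\ge 0$, one has $\cl{E}(t)=K(t)+\fourth b_2 s(t)^2$ and $\cE(t)=K(t)+\fourth b_2 s(t)^2-\half b_1 s(t)=\cl{E}(t)-\half b_1 s(t)$; throughout we use $b_2>0$. Thus the two energies differ only by the term $\half b_1 s(t)$, whose sign is that of $b_1$, and each of the four inequalities will come from discarding $K$ (or part of it) and completing the square in $s$.

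Case $b_1<0$. Since $-\half b_1 s(t)\ge 0$ we get at once $\cl{E}(t)\le\cl{E}(t)-\half b_1 s(t)=\cE(t)$, which is the upper bound in \eqref{energy-compare1}. For the lower bound, substitute $\cE=\cl{E}-\half b_1 s$ to obtain $\cl{E}-\half\cE=\half\cl{E}+\fourth b_1 s$, and then drop $\half K\ge 0$ and complete the square:
\[
\cl{E}(t)-\half\cE(t)=\half K(t)+\frac{b_2}{8}s(t)^2+\fourth b_1 s(t)\ \ge\ \frac{b_2}{8}\Big(s(t)+\frac{b_1}{b_2}\Big)^2-\frac{b_1^2}{8b_2}\ \ge\ -\frac{b_1^2}{8b_2},
\]
which rearranges to $\half\cE(t)-\frac{b_1^2}{8b_2}\le\cl{E}(t)$.

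Case $b_1>0$. Now $\half b_1 s(t)\ge 0$ gives $\cE(t)=\cl{E}(t)-\half b_1 s(t)\le\cl{E}(t)$, the lower bound in \eqref{energy-compare2}. For the upper bound, use $\cE=\cl{E}-\half b_1 s$ again and complete the square the other way:
\[
2\cE(t)+\frac{b_1^2}{b_2}-\cl{E}(t)=\cl{E}(t)-b_1 s(t)+\frac{b_1^2}{b_2}=K(t)+\frac{b_2}{4}\Big(s(t)-\frac{2b_1}{b_2}\Big)^2\ \ge\ 0,
\]
i.e. $\cl{E}(t)\le 2\cE(t)+b_1^2/b_2$.

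There is no real analytic content to overcome here: once one observes that $\cl{E}$ and $\cE$ differ only by $\half b_1\|w_x\|^2$ and that both contain the same manifestly nonnegative block $K$, the whole statement reduces to the two completing-the-square identities above. The only point requiring care is the bookkeeping—deciding which square, $(s+b_1/b_2)^2$ or $(s-2b_1/b_2)^2$, and how much of $K$ to retain, so as to land exactly on the advertised constants $b_1^2/(8b_2)$ and $b_1^2/b_2$. In particular nothing about the beam dynamics, the boundary conditions, or the value of $\al$ enters beyond $b_2>0$, so the argument is identical for $\al=0$ and $\al>0$.
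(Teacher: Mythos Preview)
Your proof is correct and follows essentially the same approach as the paper: the paper's proof simply observes that the result reduces to computing the minima of the polynomials $\frac{1}{8}b_2 s^4 - \fourth |b_1| s^2$ (for $b_1<0$) and $\fourth b_2 s^4 - b_1 s^2$ (for $b_1>0$), which is exactly what your completing-the-square computations accomplish (with your $s=\|w_x\|^2$ playing the role of their $s^2$). You have merely supplied more explicit detail, including the trivial sign-based inequalities that the paper leaves implicit.
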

\begin{proof}
  The result follows by looking at the minimum values of polynomial functions $\frac{1}{8} b_2 s^4 - \fourth |b_1| s^2$ for $b_1<0$ and $\fourth b_2 s^4 -  b_1 s^2$ for $b_1>0$: respectively $-b_1^2/(8b_2)$ and $-b_1^2/b_2$. 
\end{proof}

We now discuss the pertinent notions of solution. Informally, we have that:

   {\em Weak solutions} satisfy a variational (in time and space) formulation of \eqref{Bergerplate}. One of the key features of such solutions is that the principal, second-order time derivative  $w_{tt}$, is interpreted only in distributional sense. 

   {\em Strong solutions} will refer to weak solutions possessing additional regularity which permits a classical point-wise interpretation of the second-order evolution \eqref{Bergerplate} (albeit, the boundary condition may still have to be interpreted weakly due to subtleties in the case $\al>0$). Such solutions will only be considered for the rotational model, $\al>0$.

   {\em Generalized solutions} are  $C([0,T],\scrH_\al)$ limits of strong solutions. They are also weak solutions, but here the distinction is made that they admit smooth approximations, and thus potentially inherit some properties of strong solutions that hold with respect to the topology of $\scrH_\al$. On the other hand, when talking about weak solutions in general, we do not assert that strong solutions exist to begin with.

   {\em Semigroup solutions} do not technically form a separate class. Rather, this term will refer to the case when the  dynamics can be represented as a semi-flow with an  $\om$-m-dissipative evolution generator. Strong solutions will correspond precisely to those whose initial data resides in the appropriate domain  $\dom(\bbA)\subset \scrH_\al$, invariant under the flow, of that generator. For semigroup initial data only in $\mathscr H_{\alpha}$, the corresponding function is a generalized solution (and hence, also weak).

We now give precise definitions of the solutions discussed above:
\begin{definition}[Weak solution]\label{def:weak}
  We say a function $w \in H^1(0,T; \ltom)$ is a weak solution of \eqref{Bergerplate} on the interval $[0,T]$ if
  \[
    w \in L^\infty(0,T; H^2_*) \cap C_w([0,T]; H^2_*);~~
    w_t \in L^\infty(0,T; L_\al) \cap C_w([0,T]; L_\al).\;\; \footnote{$C_w$ denoting weakly continuous functions.}
  \]
  In addition, for every $\phi \in H^2_*$ we have
  \begin{equation}\label{weak-sol}
    \begin{split}
      & \frac{d}{dt} \left[(w_t, \phi)  + \al (w_{tx}, \phi_x)\right] + (w,\phi)_{H^2_*} + (b_2 \|w_x\|^2-b_1)(w_x,\phi_x)
      + k_0(w_t,\phi) + \al k_0 (w_{tx},\phi_x)\\
   =& (p_0, \phi) -\bet (w_t, \phi) +\bet U (w_x,\phi),
 \end{split}
  \end{equation}
  where $d/dt$ denotes a derivative in the sense of distributions $\cD'(0,T)$. Moreover, for any $\chi \in H^2_*$, $\psi\in \ltom$
  \begin{equation}\label{weak-ic}
    (w,\phi)_{H^2_*}\big|_{t\to 0^+} = (w_0, \chi)_{H^2_*},\quad (w_t,\psi)\big|_{t\to 0^+} = (w_1,\psi).
  \end{equation}
\end{definition}
More regular solutions will be considered in the following sense:
\begin{definition}[Strong solution]
  We say a weak solution on $[0,T]$ is strong if it possesses the following additional regularity: $w\in L^\infty(0,T; W)$, where 
\begin{equation}\label{def:W}
  W \dfn \{ v \in H^2_*\cap H^3(0,L) : v_{xx}(L) = 0\},
\end{equation}
    and $w_t\in L^\infty(0,T; H^2_*)$. In addition $\frac{d^+}{dt^+}w_t$ is right-continuous and $L^\infty(0,T)$ with values in $H^1_*$.  (Note that such solutions still satisfy the dynamic, third-order, boundary  condition only in a weak sense---see \cite{springer} for more discussion of a comparable scenario).
\end{definition}

Finally, the most convenient notion of solutions will be that induced by a semigroup flow:
\begin{definition}[Semigroup well-posedness]\label{def:semigroup}
  We will say \eqref{Bergerplate} is semigroup well-posed if there is a family of  (nonlinear) locally Lipschitz operators $t\mapsto \cS(t)$ on $\scrH_\al$, such that   (i) for any $y_0=(w_0,w_1)\in \scrH_\al$ the function $t\mapsto S(t)y_0$ is in $C([0,T]; \scrH_\al)$, (ii) is a weak solution to \eqref{Bergerplate}, and (iii) is a strong $C([0,T];\scrH_\al)$ limit of strong solutions to \eqref{Bergerplate} on every $[0,T]$, $T >0$. In particular, solutions are unique and depend continuously in $C([0,T];\scrH_\al)$ on the initial data from $\scrH_\al$. Furthermore, there exists some subset, denoted $\dom(\bbA)$ (to be precisely defined below), invariant under the flow such that all solutions originating therein are strong solutions.
  \end{definition}

\section{Main results}\label{mains}
In this section we present the main theoretical results proven in this treatment, and provide some commentary. We do not discuss our numerical analysis of \eqref{Bergerplate} here, instead relegating that to Section \ref{numerics}. Lastly, the relationship between our results and those in the literature is the topic of Section \ref{discussions}.

\begin{remark}[Plate models]
  The Theorems \ref{thm:weak} and \ref{thm:semigroup} below are stated and demonstrated for a Krieger beam.  But exactly the same proofs  extend to the Berger plate model (described in Section \ref{nbm}) as well.
\end{remark}

\begin{theorem}\label{thm:weak}
Let $U,\beta, k_0 \ge 0$, and let $p_0 \in L^2(0,L)$. Then for $\alpha \ge 0$ and any $T>0$ there exists a weak solution to \eqref{Bergerplate}. 
\end{theorem}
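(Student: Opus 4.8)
The plan is to construct a weak solution by a Faedo--Galerkin approximation, with $n$-uniform bounds coming from the energy identity together with the comparison of Proposition~\ref{prop:compare}, and then to pass to the limit by compactness. A useful preliminary observation is that the dynamic third--order boundary condition in \eqref{Bergerplate} has already been \emph{absorbed} into the variational identity \eqref{weak-sol}: integrating $D\partial_x^4 w$, $-\alpha\partial_x^2 w_{tt}$, $-\alpha k_0\partial_x^2 w_t$ and $(b_1-b_2\|w_x\|^2)w_{xx}$ by parts against $\phi\in H^2_*$ produces boundary terms at $x=L$ that regroup into precisely that condition and cancel. Hence no boundary trace estimate (in particular, nothing on $w_t(L)$) is needed at the level of existence, and the whole argument runs uniformly in $\alpha\ge 0$; only the space in which the velocity is controlled --- $L^2(0,L)$ for $\alpha=0$, $H^1_*$ for $\alpha>0$ --- changes. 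Concretely, let $\{e_j\}_{j\ge1}\subset H^2_*$ be the eigenfunctions of the clamped--free bending operator, $D(e_{j,xx},\phi_{xx})=\lambda_j(e_j,\phi)$ for all $\phi\in H^2_*$ (orthogonal in both $L^2(0,L)$ and $H^2_*$, dense in $H^2_*$), and seek $w_n(t)=\sum_{j=1}^n c_j^n(t)e_j$ satisfying \eqref{weak-sol} tested against $e_1,\dots,e_n$ with $w$ replaced by $w_n$, and with $w_n(0)\to w_0$ in $H^2_*$, $\dot w_n(0)\to w_1$ in $L_\alpha$. This is a second--order ODE system with invertible mass matrix $m_{jk}=(e_j,e_k)+\alpha(e_{j,x},e_{k,x})$ and polynomial (hence locally Lipschitz) nonlinearity, so a local solution exists.

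\emph{A priori bounds.} Multiplying the $j$-th equation by $\dot c_j^n(t)$ and summing yields the energy identity
\[
\frac{d}{dt}\,\cE_n(t) \;=\; (p_0,\dot w_n)\;-\;(\beta+k_0)\|\dot w_n\|^2\;-\;\alpha k_0\|\dot w_{n,x}\|^2\;+\;\beta U\,(w_{n,x},\dot w_n),
\]
where $\cE_n$ is the total energy \eqref{def:energy} evaluated on $w_n$. Estimating the right side by Cauchy--Schwarz and Young gives $\tfrac{d}{dt}\cE_n\le C\big(1+\|\dot w_n\|^2+\|w_{n,x}\|^2\big)$. The definite energy $\cl{E}_n$ (the functional \eqref{def:energy2} on $w_n$) controls $\|\dot w_n\|^2+\|w_{n,x}\|^2$ up to an additive constant, Proposition~\ref{prop:compare} gives $\cl{E}_n\le C(1+\cE_n)$, and the nonlinear part $\tfrac14 b_2\|w_{n,x}\|^4-\tfrac12 b_1\|w_{n,x}\|^2$ of $\cE_n$ is bounded below; Gr\"onwall then bounds $\cE_n$ on each $[0,T]$ uniformly in $n$. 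Hence the approximate solutions are global, and
\[
\{w_n\}\ \text{is bounded in}\ L^\infty(0,T;H^2_*),\qquad \{\dot w_n\}\ \text{is bounded in}\ L^\infty(0,T;L_\alpha).
\]

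\emph{Compactness and limit.} Pass to a subsequence with $w_n\wstarto w$ in $L^\infty(0,T;H^2_*)$ and $\dot w_n\wstarto w_t$ in $L^\infty(0,T;L_\alpha)$. Since $\{w_n\}$ is bounded in $L^\infty(0,T;H^2_*)$ and $\{\dot w_n\}$ in $L^\infty(0,T;L^2(0,L))$, the Aubin--Lions--Simon lemma (with $H^2_*\hookrightarrow\hookrightarrow H^1(0,L)\hookrightarrow L^2(0,L)$) gives $w_n\to w$ in $C([0,T];H^1(0,L))$, so $w_{n,x}\to w_x$ in $C([0,T];L^2(0,L))$ and $\|w_{n,x}(t)\|^2\to\|w_x(t)\|^2$ uniformly on $[0,T]$. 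Testing the $m$-th Galerkin equation against $\theta\in C_c^\infty(0,T)$ for $n\ge m$: the linear terms pass to the limit by the weak convergences, the leading time--derivative term after integration by parts in $t$, and the nonlinear term since $\|w_{n,x}\|^2(w_{n,x},e_{m,x})\to\|w_x\|^2(w_x,e_{m,x})$ a.e.\ in $t$ under a uniform bound. Density of $\spn\{e_j\}$ in $H^2_*$ extends the identity to all $\phi\in H^2_*$. The regularity $w\in L^\infty(0,T;H^2_*)\cap C_w([0,T];H^2_*)$, $w_t\in L^\infty(0,T;L_\alpha)\cap C_w([0,T];L_\alpha)$, $w\in H^1(0,T;L^2(0,L))$ follows from the uniform bounds and standard weak--continuity arguments, and the initial conditions \eqref{weak-ic} are read off from $w_n(0)\to w_0$, $\dot w_n(0)\to w_1$ using weak continuity.

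\emph{Main obstacle.} The limit passage in the cubic nonlinearity $\|w_x\|^2 w_{xx}$ is routine once Aubin--Lions delivers strong $H^1$-convergence of $w_n$. The genuinely delicate step is the \emph{$n$-uniform} a priori estimate in the presence of the non--dissipative aerodynamic loading $\beta U w_x$ and the possibly indefinite stiffness $(b_1-b_2\|w_x\|^2)w_{xx}$: here one cannot expect energy decay, and must instead use Proposition~\ref{prop:compare} to dominate the $L^2$-type norms by the definite energy and close the estimate by Gr\"onwall, accepting exponential-in-time growth. A secondary point requiring care --- carried out once, when \eqref{weak-sol} is derived --- is the bookkeeping of the $x=L$ boundary terms so that the contributions of $-\alpha\partial_x w_{tt}(L)$, $-\alpha k_0\partial_x w_t(L)$, $D\partial_x^3 w(L)$ and $(b_1-b_2\|w_x\|^2)w_x(L)$ cancel exactly against the dynamic boundary condition.
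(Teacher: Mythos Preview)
Your proposal is correct and follows essentially the same approach as the paper: Galerkin approximation via eigenfunctions of the clamped--free bending operator $\cA_0$, $n$-uniform energy bounds obtained by testing with $\dot w_n$ and applying Gr\"onwall together with Proposition~\ref{prop:compare}, Aubin--Lions--Simon compactness to upgrade to strong $H^1$ convergence (hence uniform convergence of $\|w_{n,x}\|^2$), and passage to the limit in the variational identity. The paper formally restricts to $\al=0$ (deferring $\al>0$ to the stronger semigroup result), while you treat both cases at once via the mass matrix $m_{jk}=(e_j,e_k)+\al(e_{j,x},e_{k,x})$; this is a cosmetic difference only.
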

\begin{remark}
It is not immediately clear that such solutions (for any value of $\alpha \ge 0$) remain bounded for all time.
In the general case of weak solutions, with $\alpha$ possibly zero (no rotational inertia effects), we make no assertion concerning the uniqueness or continuous dependence upon data.
\end{remark}
In the model with rotational inertia, we can prove something much stronger, namely {\em semigroup well-posedness} of the problem. Depending on the regularity of the initial data, this will yield {\em generalized solutions}  or {\em strong solutions}.
\begin{theorem}[Semigroup flow]\label{thm:semigroup}
Suppose $\al>0$,

Then  system \eqref{Bergerplate} is semigroup-wellposed in the sense of Definition \ref{def:semigroup}, with the domain of the evolution generator given by:
\[ \label{domain}
  \dom(\bbA) = \{ (w,v) : w \in H^2_*\cap H^3(0,L), w_{xx}(L) = 0, v\in H^2_*\}.
\]
In addition, every such solution satisfies the energy identity
\begin{equation}\label{ei}
  \begin{split}
  &  \cE(T) + (k_0 + \bet) \intT \|w_t(s)\|^2 ds+ \al k_0 \intT\|w_{tx}(s)\|^2 ds \\
    =& \cE(0)+\intT\big(p_0 - \bet U w_x(s),\; w_t(s)\big) ds \txtforall T\geq 0.
  \end{split}
\end{equation}
\end{theorem}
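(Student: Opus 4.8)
The plan is to recast \eqref{Bergerplate} as an abstract Cauchy problem $y' = \bbA y + \cF(y)$ on $\scrH_\al$, where $\bbA$ captures the linear conservative-plus-damping part (the Rayleigh/Kelvin-Voigt beam \eqref{linearplate} with the linear boundary terms and the frictional/square-root damping $k_0 C_\al$, plus possibly the linear piston pieces $-\bet w_t$ and a bounded skew part from $\bet U w_x$), and $\cF$ collects the nonlinear restoring force $(b_1 - b_2\|w_x\|^2)w_{xx}$ together with the nonlinear boundary term, realized variationally as an element of $[H^2_*]'$ or, after applying $C_\al^{-1}$ in the second component, as a map into $\scrH_\al$. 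First I would make $\bbA$ precise: on $\dom(\bbA)$ as stated, $\bbA(w,v) = (v,\, -C_\al^{-1}[D\p_x^4 w + k_0 C_\al v + \text{linear flow terms}])$, with the third-order dynamic boundary condition absorbed into the definition of $C_\al^{-1}$ acting from $[H^2_*]'$ (this is the standard ``fractional-power / weak boundary condition'' setup, cf.\ \cite{springer}); here $\al>0$ is essential so that $C_\al = I + \al C$ is boundedly invertible with $\dom(C_\al^{1/2}) = H^1_*$, giving $w_t$ the extra spatial smoothness needed to make sense of the trace $w_t(L)$ in the boundary condition. I would check that $\bbA - \om I$ is m-dissipative on $\scrH_\al$ for suitable $\om$: dissipativity follows from the energy computation $(\bbA y, y)_{\scrH_\al} = -k_0\|v\|^2_{L_\al} - \bet\|v\|^2 + \bet U(w_x, v) \le \om\|y\|^2_{\scrH_\al}$ after Cauchy-Schwarz on the last term; maximality (range condition $(\om I - \bbA)y = f$ solvable) reduces to a coercive elliptic problem for $w$ in $H^2_*$, solved by Lax-Milgram, with $v = \om w - f_1 \in H^2_*$ and elliptic regularity upgrading $w$ to $W$.

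Next I would handle the nonlinearity. The map $\cF: \scrH_\al \to \scrH_\al$, $\cF(w,v) = (0,\, C_\al^{-1}[(b_1 - b_2\|w_x\|^2)w_{xx} + \text{bdry}])$, viewed variationally via $\phi \mapsto (b_1 - b_2\|w_x\|^2)(w_x,\phi_x)$ on $H^2_*$, is locally Lipschitz on bounded sets of $\scrH_\al$: the cubic term $\|w_x\|^2 w_{xx}$ is handled by the algebra/multiplier structure of $H^2_* \hookrightarrow W^{1,\infty}$ in one dimension, and the boundary contribution $(b_1 - b_2\|w_x\|^2)w_x(L)\phi(L)$ is controlled by the trace theorem $H^2_* \hookrightarrow C^1[0,L]$ — crucially this is a \emph{genuine} part of the variational functional, not an independent term, because the natural (free) boundary condition of $\p_x^4$ on $H^2_*$ is precisely $D\p_x^3 w + (b_1 - b_2\|w_x\|^2)w_x = \al\p_x(w_{tt}+k_0 w_t)$ at $x=L$. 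With $\bbA$ m-dissipative (modulo $\om I$) and $\cF$ locally Lipschitz, the nonlinear semigroup theory for $\om$-m-dissipative-plus-locally-Lipschitz generators (Kato / Segal; see \cite{springer, survey1}) yields a unique local mild solution; a priori boundedness of the energy $\cE(t)$ — obtained by formally testing the equation with $w_t$ (rigorously, on strong solutions) and using the energy-comparison Proposition \ref{prop:compare} to bound $\cl E$ and hence $\|(w,w_t)\|_{\scrH_\al}$ from below by $\cE$ up to additive constants, then Grönwall on \eqref{ei} — rules out blow-up and gives global existence on every $[0,T]$. Local Lipschitz dependence on data, strong solutions for $y_0 \in \dom(\bbA)$, and generalized solutions as $C([0,T];\scrH_\al)$-limits all follow from the standard semigroup package, matching Definition \ref{def:semigroup}.

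Finally, the energy identity \eqref{ei}: I would derive it first for strong solutions, where $w_t \in H^2_*$ legitimizes pairing the equation with $w_t$ in the duality $[H^2_*]' \times H^2_*$; the $\p_x^4 w$ term integrates by parts to $\frac{d}{dt}\frac12 D\|w_{xx}\|^2$ with the boundary terms at $x=L$ exactly cancelling the nonlinear and inertial boundary contributions (this cancellation is the whole point of the ``physically correct'' boundary condition, as emphasized in the last Remark of the excerpt), the inertial term gives $\frac{d}{dt}\frac12\|w_t\|^2_{L_\al}$, the damping gives $(k_0+\bet)\|w_t\|^2 + \al k_0\|w_{tx}\|^2$, the nonlinear restoring force gives $\frac{d}{dt}(\frac14 b_2\|w_x\|^4 - \frac12 b_1\|w_x\|^2)$, and $\bet U w_x$ stays on the right; integrating in time over $[0,T]$ gives \eqref{ei}. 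Passing to generalized solutions is by the continuity of all terms in \eqref{ei} under $C([0,T];\scrH_\al)$ convergence (the nonlinear terms converge since $w \mapsto \|w_x\|$ etc.\ are continuous on $H^2_*$, and the damping integrals are lower-semicontinuous but in fact continuous here because convergence of strong solutions is strong in the relevant norms by the semigroup construction).

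\medskip

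\noindent\textbf{Main obstacle.} The delicate point is the rigorous treatment of the third-order dynamic boundary condition containing $\al\p_x w_{tt}(L)$ — establishing that $\bbA$ with the stated domain really is m-dissipative requires correctly interpreting this boundary term through the $C_\al^{-1}:[H^2_*]' \to H^2_*$ lifting and verifying that strong solutions satisfy it only in the weak/variational sense (as the theorem statement itself flags). Closely related, and genuinely requiring $\al>0$, is showing that the nonlinear boundary trace $(b_1-b_2\|w_x\|^2)w_x(L)$ — and in the energy identity the velocity trace $w_t(L)$ — are well-defined and locally Lipschitz/continuous on the state space; for $\al=0$ the velocity lives only in $L^2$ and these traces are meaningless, which is exactly why the semigroup theory collapses there and one is forced back to the Galerkin weak-solution result of Theorem \ref{thm:weak}.
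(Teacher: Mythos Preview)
Your approach is correct but takes a genuinely different route from the paper's. The paper does \emph{not} split off the nonlinearity as a Lipschitz perturbation of a linear generator; instead it builds the full nonlinear operator $A$ (Krieger term plus the nonlinear boundary contribution) directly into the evolution generator $\bbA$, introduces truncated operators $A_M$ (replacing $\|w_x\|^2$ by $M^2$ once $\|w_x\|>M$), proves the truncated generator $\bbA_M$ is $\om$-m-dissipative on $\scrH_\al$, establishes maximality via a nonlinear elliptic lemma, invokes Kato's theorem for nonlinear contraction semigroups, and finally removes the truncation by an a priori bound on $\|w_x\|$ coming from the energy identity. In fact, the paper explicitly asserts (Section~\ref{mainpoints}) that the perturbation route is blocked: ``one has to start with a fully nonlinear generator, as opposed to writing it as a linear generator with a locally Lipschitz perturbation.'' That claim refers to the \emph{standard} perturbation picture with $f(w)=(b_1-b_2\|w_x\|^2)w_{xx}$ regarded as a map into $L^2(0,L)$, where the free-end boundary term obstructs Lipschitz continuity. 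Your variational reinterpretation---treating the nonlinearity as the functional $\phi\mapsto (b_1-b_2\|w_x\|^2)(w_x,\phi_x)$ in $[H^1_*]'$ and pulling it back via $C_\al^{-1}$---sidesteps this obstruction and does yield a locally Lipschitz map $\scrH_\al\to\scrH_\al$, precisely because $\al>0$ makes $C_\al^{-1}:[H^1_*]'\to H^1_*=L_\al$ bounded; the nonlinear boundary condition then emerges correctly from the variational identity on strong solutions. Your route is more elementary (linear Lumer--Phillips plus bounded Lipschitz perturbation, then Gr\"onwall for global existence); the paper's truncation route makes the $\om$-dissipativity estimate for the nonlinear difference $(\bbA_M y-\bbA_M z,\,y-z)_{\scrH_\al}$ and the necessity of $\al>0$ therein more explicitly visible. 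One small imprecision in your write-up: you place the nonlinear functional in $[H^2_*]'$, but for $C_\al^{-1}$ to land in $L_\al$ you need it in $[H^1_*]'$---which it is, since the test function enters only through $\phi_x$.
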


Now, we move into the results on long-time behavior. In the case $\alpha>0$, the semigroup well-posedness presented above guarantees the existence of a dynamical system $(\mathscr H_{\alpha},\mathcal S(t))$. We direct the reader to the Appendix for basic terminology and facts about the dynamical systems analysis herein. 
\begin{theorem}\label{attractor}
Let $U,\beta \ge 0$, suppose $\alpha>0$, and take $k_0>0$.  Then the dynamical system $(\mathscr H_{\alpha},\mathcal S(t))$ has a compact global attractor $\mathbf A \subset \mathscr H_{\alpha}$.
\end{theorem}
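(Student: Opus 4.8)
The plan is to invoke the standard abstract criterion for attractor existence in the \emph{non-gradient} setting (see, e.g., Chueshov--Lasiecka \cite{springer}): a dynamical system $(\mathscr H_{\alpha}, \mathcal S(t))$ that is \emph{dissipative} (admits a bounded absorbing set) and \emph{asymptotically smooth} possesses a compact global attractor. Since the non-conservative flow term $-\bet U w_x$ obstructs a gradient structure, both properties must be established directly: dissipativity from the energy identity \eqref{ei}, and asymptotic smoothness from a quasi-stability (stabilizability) estimate for differences of trajectories.

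\textbf{Step 1: dissipativity.} Starting from \eqref{ei}, I would form the perturbed Lyapunov functional
\[
  V(t) = \cE(t) + \eps\big[(w_t,w) + \al(w_{tx},w_x)\big],
\]
and, using Proposition~\ref{prop:compare} to bound the possibly indefinite $\cE$ below by a multiple of the definite energy $\cl{E}$ (minus a constant), show that for $\eps$ small and some $\om>0$ one has $\tfrac{d}{dt}V \le -\om V + C(p_0,U,\bet)$. Here $k_0>0$ is essential: the dissipation $k_0\|w_t\|^2 + \al k_0\|w_{tx}\|^2$, together with the aerodynamic $\bet\|w_t\|^2$, must absorb the equipartition terms produced by the multiplier $w$ as well as the flow term $\bet U(w_x,w_t)$, while the nonlinear contribution $b_2\|w_x\|^2(w_x,w_x)$ carries a favorable sign. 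Gr\"onwall's inequality then yields an absorbing ball $\mathcal B \subset \mathscr H_{\alpha}$.

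\textbf{Step 2: asymptotic smoothness and conclusion.} For two semigroup trajectories $z^i = \mathcal S(\cdot)y_i$ originating in $\mathcal B$, the difference $z = z^1 - z^2$ solves \eqref{Bergerplate} with zero forcing and with the nonlinearity replaced by $\mathcal N(w^1) - \mathcal N(w^2)$, which on the bounded set $\mathcal B$ is Lipschitz from $H^2_*$ into $[H^2_*]'$ modulo terms that are \emph{compact} relative to the $\mathscr H_{\alpha}$-topology. Testing the $z$-equation with $z_t$ and with $z$, then combining, I would derive
\[
  E_z(T) + c\int_0^T E_z(s)\,ds \;\le\; C E_z(0) + C\int_0^T \lot(z(s))\,ds,
\]
where $E_z$ is the quadratic energy of $z$ and $\lot(z)$ collects lower-order quantities ($\|z_x\|^2$, $\|z\|^2$, and the boundary traces arising when integrating by parts the dynamic boundary condition). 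Since $\al>0$ forces $z_{tx}\in L^2$, the damping controls $\|z_t\|^2_{L_\al}$ and hence the trace $z_t(L)$; absorbing the remaining lower-order terms via a compactness--uniqueness argument produces a quasi-stability bound
\[
  \|\mathcal S(t)y_1 - \mathcal S(t)y_2\|^2_{\mathscr H_{\alpha}} \le C e^{-\om t}\|y_1-y_2\|^2_{\mathscr H_{\alpha}} + C\sup_{s\in[0,t]} \mathfrak{n}(z(s)),
\]
with $\mathfrak{n}$ a seminorm compact on $\mathscr H_{\alpha}$ --- i.e., asymptotic smoothness. Combining this with Step 1 and the abstract theorem gives the compact global attractor $\mathbf A \subset \mathscr H_{\alpha}$ (and, as a byproduct of quasi-stability, finite fractal dimension and smoothness of $\mathbf A$, though this is beyond the statement).

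\textbf{Main obstacle.} The crux is Step 2, and within it the behavior at $x=L$: the multiplier identities generate the trace $z_t(L)$ (and $\al\partial_x z_{tt}(L)$-type terms) through the nonstandard, nonlinear third-order condition $-\al\partial_x[w_{tt}+k_0 w_t] + D\partial_x^3 w + (b_1-b_2\|w_x\|^2)w_x = 0$ at $x=L$. This is precisely where rotational inertia ($\al>0$) is indispensable: it supplies the extra spatial regularity of $w_t$ needed both to make sense of $w_t(L)$ and to control it by the square-root damping $\al k_0\|w_{tx}\|^2$. A secondary difficulty is that this square-root-type damping is weaker than full $H^2_*$ dissipation, so the missing directions in the energy norm must be recovered by routing them through the elliptic part $D\partial_x^4$ of the equation and through compact lower-order terms.
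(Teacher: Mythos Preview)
Your proposal is correct and identifies both the right two-step strategy and the decisive obstacle (the boundary trace $z_t(L)$, controlled precisely because $\al>0$ places $w_t$ in $H^1_*$). Your routes differ mildly from the paper's in each step.

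For dissipativity, you use a differential Lyapunov functional $V=\cE+\eps[(w_t,w)+\al(w_{tx},w_x)]$ and Gr\"onwall. The paper instead works integrally: it integrates the energy identity over $[t,T]$ and again in $t\in[0,T]$, combines with the equipartition multiplier $w$, and iterates the resulting bound $\cl{E}(T)(1+cT)\le C\cl{E}(0)+\cK_T$ along multiples of a fixed large $T$. Both are standard; yours is more direct, while the paper's avoids differentiating in time and passes to generalized solutions by density without further comment.

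For asymptotic smoothness, you aim for a full \emph{quasi-stability} estimate (exponential contraction plus a compact seminorm), which would indeed yield more than the statement asks (finite fractal dimension, smoothness of $\mathbf A$). The paper takes the lighter route: it stops at the observability inequality $TE_z(T)+\int_0^T E_z \le C(R)E_z(0)+C(R,T)\,l.o.t.$ (essentially your intermediate step) and feeds $\Psi=\sup_{[0,T]}\|z\|_{2-\eta}^2$ directly into the $\Psi$-criterion (Theorem~\ref{psi}), so no iteration to exponential decay is needed. One terminological slip in your outline: you invoke ``compactness--uniqueness'' to handle the lower-order terms, but in the quasi-stability framework those terms are \emph{retained} as the compact seminorm $\mathfrak n$, not absorbed; a genuine compactness--uniqueness absorption would be circular here. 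Also, your phrasing that the nonlinearity is ``Lipschitz $H^2_*\to[H^2_*]'$ modulo compact terms'' is a bit glib---the paper stresses that the nonlinear boundary condition prevents treating $f$ as a perturbation---but at the level of differences the decomposition $\cF(z)=B(w^1)z_{xx}+[B(w^1)-B(w^2)]w^2_{xx}$ does give exactly the structure you need, with the problematic trace terms $w^1_t(L)$ and $z_t(L)$ appearing after integration by parts and controlled via $\al>0$.
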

As described above, the attractor in Theorem \ref{attractor} ``determines", in some sense, the flutter behavior of the model. In particular, as a set, it contains the stationary points of they dynamics, but it is not necessarily identified with the unstable manifold, as the dynamics here are non-gradient. Thus the physical, flutter behaviors can be viewed in the context of convergence of trajectories to this invariant, compact subset of the state space $\mathscr H_{\alpha}$. Later, we will study the attractor numerically by investigating a large class of trajectories for various parameters. Note that such a study can be performed numerically for $\alpha=0$, though we cannot demonstrate the existence of an attractor (nor a proper dynamical system) in that case.

Theorem \ref{attractor} above follows directly---via Theorem \ref{dissmooth}---from two supporting propositions that will be proved individually:
\begin{proposition}\label{ball}
Let $U,\beta \ge 0$, suppose $\alpha>0$, and take $k_0>0$.  Then the dynamical system $(\mathscr H_{\alpha},\mathcal S(t))$ is ultimately dissipative, in the sense that there exists a bounded absorbing set $\mathbf B \subset \mathscr H_{\alpha}$ for the dynamics $\mathcal S$.  
\end{proposition}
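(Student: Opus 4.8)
The plan is to build a Lyapunov-type functional that is equivalent, up to additive constants, to $\|\cdot\|_{\scrH_\al}^2$, and to show it decays to an absolute constant; the absorbing set is then a ball in $\scrH_\al$ whose radius depends only on the data $p_0,\bet,U,k_0,b_1,b_2,D,\al,L$. All estimates are first carried out for strong solutions, where the manipulations below are rigorous, and then transferred to generalized (hence semigroup) solutions by the density built into Definition \ref{def:semigroup}; the energy identity \eqref{ei} and the multiplier identity below extend by the same limiting argument. Differentiating \eqref{ei} in $T$ gives
\[
 \frac{d}{dt}\cE = -(k_0+\bet)\|w_t\|^2 - \al k_0\|w_{tx}\|^2 + (p_0-\bet Uw_x,\,w_t),
\]
while testing the weak form \eqref{weak-sol} with $\phi=w$ (admissible since $w(t)\in H^2_*$, and accounting for the time dependence of $\phi$, which produces $\|w_t\|^2+\al\|w_{tx}\|^2$) and setting $\Psi(t):=(w_t,w)+\al(w_{tx},w_x)$ yields
\[
 \frac{d}{dt}\Psi = \|w_t\|^2+\al\|w_{tx}\|^2 - D\|w_{xx}\|^2 - b_2\|w_x\|^4 + b_1\|w_x\|^2 - k_0\Psi + (p_0,w)-\bet(w_t,w)+\bet U(w_x,w).
\]
The role of the multiplier $\Psi$ is to supply the negative potential terms $-D\|w_{xx}\|^2 - b_2\|w_x\|^4$, which the energy identity alone does not see.

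Next I would set $V:=\cE+\nu\Psi$ for a small $\nu>0$. First, $|\Psi|\le c_0\cl E$ (by Poincar\'e and Young, using $w(0)=w_x(0)=0$), so by the energy comparison Proposition \ref{prop:compare}, which controls the possibly indefinite $\cE$ from below and above by the definite $\cl E$ up to additive constants, for $\nu$ sufficiently small one has $a_1\cl E - a_2\le V\le a_3\cl E$ with $a_i>0$; since $\cl E\ge \half\|(w,w_t)\|^2_{\scrH_\al}$ by \eqref{def:energy2}--\eqref{def:norm}, controlling $V$ controls the $\scrH_\al$-norm. Adding the two identities above, the coefficients of $\|w_t\|^2$, $\al\|w_{tx}\|^2$, $\|w_{xx}\|^2$, $\|w_x\|^4$ become $-(k_0+\bet-\nu)$, $-\al(k_0-\nu)$, $-\nu D$, $-\nu b_2$, all strictly negative once $\nu<k_0$. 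Hence the leading part of $\frac{d}{dt}V$ is bounded above by $-c\big(\|w_t\|^2+\al\|w_{tx}\|^2+\|w_{xx}\|^2+\|w_x\|^4\big)\le -c'\cl E$.

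What remains are the genuinely lower-order contributions $b_1\|w_x\|^2$, $k_0\Psi$, $(p_0,w)$, $\bet(w_t,w)$, $\bet U(w_x,w)$, and $(p_0-\bet Uw_x,w_t)$. Each is absorbed by Young's and Poincar\'e's inequalities into a small fraction of $\|w_t\|^2+\al\|w_{tx}\|^2+\|w_x\|^4$ plus a constant depending only on the data — repeatedly using that the superlinear term $\fourth b_2\|w_x\|^4$ dominates every quadratic-in-$w_x$ quantity up to a constant, with the Young parameters chosen in the order: fix $\nu$, then the $\|w_t\|^2$-splitting relative to $k_0+\bet-\nu$, then the $\|w_x\|^4$-splitting relative to $\nu b_2$ (so there is no circularity). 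This gives $\frac{d}{dt}V\le -c''\cl E+C\le -aV+C'$ for some $a,C'>0$, and Gronwall then yields $V(t)\le V(0)e^{-at}+C'/a$, hence $\cl E(t)\le K\,V(0)\,e^{-at}+R_0$ with $R_0$ depending only on the data. Since $V(0)$ is bounded by a continuous function of $\|(w_0,w_1)\|_{\scrH_\al}$, every bounded set of initial data enters, after a uniform time, the ball $\mathbf{B}=\{y\in\scrH_\al:\|y\|_{\scrH_\al}\le\rho\}$ with $\rho^2:=2R_0+1$; this $\mathbf{B}$ is the desired bounded absorbing set.

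The main obstacle is reconciling the non-dissipative flow loading $-\bet(w_t+Uw_x)$ — especially the term $-\bet Uw_x$, which carries no definite sign and can a priori feed energy into the structure — with the indefiniteness of the total energy $\cE$ in the compressive regime $b_1>0$. The resolution is that the quartic restoring force renders both obstructions lower order for large data, so a single multiplier together with a careful ordering of the small-parameter choices closes the estimate; one must also verify that no step covertly requires $\bet$, $U$, or $k_0$ to be small (only $k_0>0$ is used, and $\rho$ is allowed to depend on all parameters). The only remaining, purely technical, point is the density argument transferring the a priori bounds from strong to generalized solutions.
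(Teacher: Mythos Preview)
Your argument is correct. Both your proof and the paper's use the same two multipliers---the energy multiplier $w_t$ and the equipartition multiplier $w$---and both exploit the fact that the quartic term $\|w_x\|^4$ renders all indefinite and flow-driven contributions lower order. The difference is in how the two identities are combined. You work at the differential level: forming the perturbed Lyapunov functional $V=\cE+\nu\Psi$, establishing $\frac{d}{dt}V\le -aV+C'$, and concluding via Gr\"onwall. The paper instead works at the integral level: it integrates the energy identity over $[t,T]$ and then over $t\in[0,T]$, combines with the time-integrated equipartition relation, and arrives at a discrete-time contraction $\cl E(T)\le\sigma\,\cl E(0)+\cM_T$ with $\sigma<1$ for $T$ large, which is then iterated. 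Your route is somewhat more streamlined and yields an explicit exponential rate to the absorbing ball; the paper's integral approach has the modest advantage of manipulating only time-integrated identities (exactly the form \eqref{ei} guaranteed for semigroup solutions), so the density step is marginally cleaner. One small inaccuracy: your upper bound $V\le a_3\cl E$ should carry an additive constant when $b_1<0$ (cf.\ Proposition \ref{prop:compare}), but this does not affect the conclusion since $\frac{d}{dt}V\le -c''\cl E + C$ still yields $\frac{d}{dt}V\le -aV+C'$ after absorbing that constant.
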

\begin{proposition}
Let $U,\beta \ge 0$, suppose $\alpha>0$, and take $k_0>0$. Then the dynamical system $(\mathscr H_{\alpha},\mathcal S(t))$ is asymptotically compact.
\end{proposition}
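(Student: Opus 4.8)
The plan is to establish a \emph{quasi-stability} (smoothing) estimate for the difference of two trajectories emanating from the bounded absorbing set, and then invoke the standard criterion for asymptotic smoothness in terms of a contractive function (recorded in the Appendix). All computations are first carried out on strong solutions, where every integration by parts is justified, and then extended to generalized solutions by the density built into the semigroup well-posedness of Theorem~\ref{thm:semigroup}. Fix the absorbing set $\mathbf{B}\subset\scrH_\al$ from Proposition~\ref{ball}; after the usual enlargement it may be taken bounded and positively invariant. For $y_1,y_2\in\mathbf{B}$ write $u=\cS(\cdot)y_1$, $\hat u=\cS(\cdot)y_2$ and $z=u-\hat u$; then $z$ solves
\[
  C_\al z_{tt}+D\p_x^4 z+k_0 C_\al z_t+\bet z_t+\bet U z_x=-\big[f(u)-f(\hat u)\big],\qquad f(w)=(b_1-b_2\|w_x\|^2)w_{xx},
\]
with $z(0)=z_x(0)=0$, $z_{xx}(L)=0$, and the dynamic condition $-\al\p_x[z_{tt}+k_0 z_t]+D\p_x^3 z+\big[g(u)-g(\hat u)\big]=0$ at $x=L$, where $g(w)=(b_1-b_2\|w_x\|^2)w_x$.

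Multiplying the $z$-equation by $z_t$ and integrating, the boundary contributions at $x=L$ coming from $D\p_x^4 z$, from $\al\p_x z_{tt}$, and from $\al k_0\p_x z_t$ collapse---exactly as in the derivation of the energy identity \eqref{ei}---to the single trace term $\big[g(u)-g(\hat u)\big](L)\,z_t(L)$; this in turn cancels against the traces produced when $(f(u)-f(\hat u),z_t)$ is integrated by parts, since $g$ is the boundary flux of $f$. What survives on the right-hand side are genuinely lower-order interactions: using $\|u_x\|^2-\|\hat u_x\|^2=(u_x+\hat u_x,z_x)$, the $\scrH_\al$-boundedness of $u,\hat u$, the trace estimate $|z_t(L)|^2\le C\big(\|z_t\|^2+\al\|z_{tx}\|^2\big)$ (with $C$ depending on $\al$, valid since $\al>0$ embeds $z_t$ in $H^1_*\hookrightarrow C[0,L]$), and Young's inequality, each such term---and likewise the non-dissipative aerodynamic contribution $\bet U(z_x,z_t)$---is bounded by $\eps\big(\|z_t\|^2+\al\|z_{tx}\|^2\big)+C(\mathbf{B},\eps)\|z_x\|^2$. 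Absorbing the $\eps$-terms into the dissipation $(k_0+\bet)\|z_t\|^2+\al k_0\|z_{tx}\|^2$---here $k_0>0$ is indispensable to dominate $z_{tx}$---and folding the remaining total time-derivatives into a modified energy $E_z(t)$ equivalent to $\|z(t)\|^2_{\scrH_\al}$ modulo lower-order norms, one obtains
\[
  \frac{d}{dt}E_z(t)+c_0\big(\|z_t(t)\|^2+\al\|z_{tx}(t)\|^2\big)\le C(\mathbf{B})\,\|z_x(t)\|^2,\qquad c_0>0.
\]

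To convert this into decay of the whole $\scrH_\al$-norm, add $\eta$ times the multiplier relation obtained by testing the $z$-equation with $z$; for small $\eta>0$ this supplies the missing term $+D\|z_{xx}\|^2$. Its boundary terms again reduce---via the same cancellation through the dynamic condition---to lower-order traces, which are controlled using $z_x(0)=0$ in the form $z_x(L)^2\le 2\|z_x\|\,\|z_{xx}\|$ together with $|z(L)|\le C\|z_x\|$. The Lyapunov functional $V=E_z+\eta\big[(z_t,z)+\al(z_{tx},z_x)\big]$, still equivalent to $\|z\|^2_{\scrH_\al}$ up to lower-order norms, then satisfies $\frac{d}{dt}V+c_1 V\le C(\mathbf{B})\|z(t)\|^2_{H^1(0,L)}$; integrating and passing back to the state-space norm gives the quasi-stability estimate
\[
  \|\cS(T)y_1-\cS(T)y_2\|^2_{\scrH_\al}\le C(\mathbf{B})\,e^{-c_1 T}\|y_1-y_2\|^2_{\scrH_\al}+C(\mathbf{B},T)\sup_{t\in[0,T]}\|(u-\hat u)(t)\|^2_{H^1(0,L)}.
\]
Since trajectories issuing from $\mathbf{B}$ are bounded in $C([0,T];H^2_*)$ with time derivative bounded in $C([0,T];L_\al)$, the map $y\mapsto\cS(\cdot)y$ sends $\mathbf{B}$ into a precompact subset of $C([0,T];H^1(0,L))$ (Aubin--Lions), so the last term above defines a contractive function on $\mathbf{B}\times\mathbf{B}$. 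Choosing $T$ so large that $C(\mathbf{B})e^{-c_1 T}<1/4$ and applying the asymptotic-smoothness criterion yields the asserted asymptotic compactness of $(\scrH_\al,\cS(t))$; as a byproduct the displayed inequality is a full quasi-stability estimate, which, together with Proposition~\ref{ball}, also delivers Theorem~\ref{attractor} directly.

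The main obstacle is the second step: one must check that the dangerous velocity traces $z_t(L)$---and, after the $z$-multiplier, $z_{ttx}(L)$ and $z_{tx}(L)$---genuinely cancel thanks to the precise form of the nonlinear boundary condition in \eqref{Bergerplate}, and that the boundary traces that do survive are absorbed by the $\al k_0\|z_{tx}\|^2$ dissipation. This is exactly where both hypotheses $\al>0$ and $k_0>0$ are used, and where a less carefully chosen model (or the standard linear clamped-free conditions combined with the nonlinearity) would break down.
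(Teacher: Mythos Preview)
Your argument is correct and shares the paper's overall scheme---use the multipliers $z_t$ and $z$ on the difference equation, exploit the cancellation of boundary traces built into the nonlinear dynamic condition at $x=L$, and conclude via the asymptotic-smoothness criterion (Theorem~\ref{psi}). There are, however, two genuine differences in execution worth noting.

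First, you package the estimate as a \emph{quasi-stability inequality}: a Lyapunov functional $V\sim E_z$ satisfying $\frac{d}{dt}V+c_1V\le C\|z\|_{H^1}^2$, which after Gr\"onwall yields $E_z(T)\le Ce^{-c_1T}E_z(0)+C\sup_{[0,T]}\|z\|_{H^1}^2$. The paper instead derives an integrated observability inequality $TE_z(T)+\int_0^TE_z\le C(R)E_z(0)+C(R,T)\sup_{[0,T]}\|z\|_{2-\eta}^2$ (Lemma~\ref{observe}) and takes $T$ large. Both feed into Theorem~\ref{psi}, but your version is strictly stronger: it is a full quasi-stability estimate in the sense of Chueshov--Lasiecka, and would immediately give finite fractal dimension of the attractor and the existence of an exponential attractor---facts the paper does not claim.

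Second, your treatment of the interior nonlinear remainder $B(u)(z_x,z_{tx})+[B(u)-B(\hat u)](\hat u_x,z_{tx})$ is more direct: you bound each by $\eps\|z_{tx}\|^2+C\|z_x\|^2$ and absorb into the damping $\al k_0\|z_{tx}\|^2$. The paper instead rewrites $B(w^1)(z_x,z_{tx})$ as a total time derivative plus $\tfrac12\frac{d}{dt}B(w^1)\,\|z_x\|^2$, then integrates $\frac{d}{dt}B(w^1)$ by parts in $x$, which \emph{reintroduces} the velocity trace $w^1_t(L)$ (see \eqref{bad}--\eqref{bad*}). That is why Remark~\ref{alphazerosmooth} singles out $w^1_t(L)$ and $z_t(L)$ as the crucial obstructions. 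In your route the trace $z_t(L)$ genuinely cancels and never reappears, so the appeal to $|z_t(L)|^2\le C(\|z_t\|^2+\al\|z_{tx}\|^2)$ in your sketch is in fact unnecessary; the role of $\al>0$ in your argument is solely to furnish the dissipation $\al k_0\|z_{tx}\|^2$ needed to absorb the $\eps\|z_{tx}\|^2$ pieces.
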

\begin{remark}
In practice, we will show that $(\mathscr H_{\alpha},\mathcal S(t))$ is {\em asymptotically smooth}, using the criterion in Theorem \ref{psi}. For dissipative dynamical systems, the two properties are equivalent (see the Appendix). 
\end{remark}
Lastly, we note that {\em global-in-time boundedness}  for weak solutions of \eqref{Bergerplate} with $\alpha=0$ follows {\em formally} from the steps in the proof of Proposition \ref{ball} above for $\alpha=0$. We elaborate on this below.

\section{Previous Literature and Relationship with Current Analysis}\label{discussions}
We now provide a thorough literature survey, with three primary focal points: Section \ref{nbm} referencing analyses of various {\em extensible} beam and plate models; Section \ref{bmcb} with a collection of recent papers that look specifically at cantilevered extensible beams, closely related to the model here; and Section \ref{fs} with {\em very recent} studies in the engineering literature which consider the effects of {\em inextensibility} in structural models immersed in axial flow. 

\subsection{Nonlinear beam and plate models}\label{nbm}
We begin by noting that, for the linear beams, when $\alpha=0$, the general form of equation \eqref{Bergerplate}  is known as the Euler-Bernoulli beam, while for $\alpha>0$, one has the so called Rayleigh beam\footnote{See \cite{slenderness} for a nice comparison between four principal linear beam theories across various configurations.}. All nonlinear models considered here are so called {\em large deflection} beam or plate models.  We point out some ambiguity in the terminology associated with these nonlinear models. The vectorial beam equation given as \eqref{LLsystem} above is the beam analogue of the so-called full von K\'arm\'an plate system \cite{lagnese}. In neglecting in-plane accelerations, one can simplify both the 1-D or 2-D systems. In the case of the beam, we have demonstrated that simplification, which yields the beam model here in \eqref{Bergerplate}. Generally speaking, this model has been historically referred to in the literature as a Krieger or Krieger-Woinowsky beam \cite{wonkrieg, beam4}, for instance. In the case of the 2-D full von K\'arm\'an system, when one takes in-plane accelerations to be negligible, the so-called scalar von K\'arm\'an equation \cite{lagnese, springer} are obtained. If one further assumes the {\em second strain invariant is small}, the scalar von K\'arm\'an equation simplifies to what is known as a Berger plate equation \cite{berger}. The nonlinearities in the case of both the Berger plate and the Krieger beam have the same nonlocal structure:
\[
  f(w)=\left(b_1-b_2\int_\Om |\nabla w|^2dX\right)\Delta w,
\]
where, in case of a beam, $w=w(x)$ and $\Om=[0,L]$. In this treatment, we will refer to dynamics (beam or plate) with the nonlinear structure above with the name {\em Krieger}.

Now, we note the classic references \cite{ball, ball2,biacrip,dickey, edenmil} that discuss the well-posedness, stabilization, and attractors for extensible (nonlinear) beam models and Berger plates, with  various boundary conditions---typically {\em clamped} or {\em hinged}. The already referenced \cite{lagleug} discusses the vectorial model \eqref{LLsystem}---which includes rotational inertia---and develops a theory of well-posedness through a clever variable change; this allows for a boundary stabilization analysis. See \cite{pue-tuc:96:AMO, tat-tus:97:NoDEA, koc-las:02, lasiecka:99:CPDE,lasiecka:98:SICON}
for well-posedness and stability analyses of the challenging full von K\'arm\'an plate system. The monograph \cite{lagnese} provides a nice account of the elasticity modeling in the theory of large deflections, and discusses boundary stabilization of various models, though the focus is on plates. 
	
	In  \cite{menz} the authors discuss  the convergence of dynamics of the full 1-D von K\'arm\'an \eqref{LLsystem} system (with rotational inertia) to solutions of the Krieger beam (with rotational inertia). The analysis is highly dependent upon boundary conditions taken in \eqref{LLsystem}, and only clamped or hinged boundary conditions are considered. The work \cite{gw} (and references therein) discusses the general impact of taking {\em free} boundary conditions on a portion of the boundary for a Berger plate, investigating the issues associated with the nonlocal nonlinearity and a third-order boundary condition. Last, but not least, we note the paper \cite{cotizelati} which derives a general nonlinear Krieger-like beam from variational principles in the stationary case, and discusses the structure of the set of solutions.

      \subsection{Berger model for cantilevered beams}\label{bmcb}
      Though there is extensive literature for nonlinear beams (and plates), to the best of the authors' knowledge, there is very little mathematical discussion of {\em nonlinear} cantilevered beams. And, as we have pointed out above, to study the dynamics in the {\em post-flutter} regime, one requires some nonlinear restoring force. We do acknowledge that there is a nontrivial literature for nonlinear plates (von K\'arm\'an---scalar and full) with free boundary conditions---see \cite{daniellorena, newirena}, or \cite{springer} (and numerous references therein). 
      
      The principal theme for nonlinear elastic structures with a portion of the boundary free seems to be the need to adjust/augment the boundary conditions to accommodate other aspects of the theory---for instance in the study of long time behavior \cite{survey1,survey2}. In the analysis here, the boundary conditions are augmented (relative to the standard linear boundary conditions) owing to the analysis of the constant of integration, following the reduction from \eqref{LLsystem}.   
      A cantilevered model of this type, though with more general nonlinearity, was first considered in \cite{beam1} (well-posedness and uniform stability), and later in \cite{beam2,beam4}. In \cite{beam2} the author considers the static version of the problem in \cite{beam1}, with the associated nonlinear free boundary condition; that paper includes a very brief numerical study with specific polynomial nonlinear structure.  The subsequent work \cite{beam4} is our primary motivating reference here, and addresses the dynamic equations with a general Berger-like nonlinearity from the point of view of {\em attractors}, following from \cite{beam2,beam3}. The latter analysis utilizes Lyapunov methods to demonstrate the dissipativity of the dynamical system, as well as a version of Theorem \ref{psi} to demonstrate asymptotic compactness.
      
      In each of the above references, however,  {\em rotational inertia effects are neglected, and boundary damping (as well as boundary sources) is included.} Additionally, the focus of these papers is not flutter, and thus the piston-theoretic terms in \eqref{Bergerplate} are not present. Due to the regularizing presence of boundary damping, the notion of solution considered in these papers corresponds to generalized and strong solutions only. In this treatment we also discuss weak solutions for a model without rotational terms (and no boundary damping). 
   We emphasize that the inclusion of boundary damping in the existing papers is precisely to accommodate troublesome trace terms arising in the analysis of the difference of two trajectories---a key component in uniqueness, continuous dependence, and asymptotic compactness of associated dynamical systems. See further comments in Remark \ref{alphazerosmooth} below.

		\subsection{Flutter studies}\label{fs}
		We begin by noting that a bulk of the engineering literature addressing flutter, especially in the axial configuration, omit the effects of the rotational inertia. Indeed, a scaling argument is typically invoked, and for ``thin" structures the term is often neglected \cite{slenderness, lagnese}. Mathematically, however, the presence of rotational inertia is highly nontrivial due to its regularizing effects on the velocity $w_t \in H^1$ for $\alpha>0$. In this paper we note that its presence is central for us to obtain a bulk of our theoretical results in the situation when there is no boundary damping imposed on the free end. We point out that there are other (typically nonlinear) problems where the regularizing effect of rotational inertia for the velocity $w_t$ is paramount, see, for example, \cite{lagleug, newirena} and references therein. 
		
As discussed in Remark \ref{inext}, the key distinction for cantilevered beam models arises in the discussion of the dominant nonlinear effects: extensible---nonlinear effect of stretching on bending, or inexensible---nonlinear inertial and stiffness effects. Future work will address well-posedness of recently derived inextensible beam models \cite{inext1,inext2}, and (both theoretical and numerical) long-time behavior analyses in the presence of aerodynamic loading will follow. See also the papers \cite{paidoussis,paidoussis2,paidoussis3} for further discussions of the need for, and effects, of implementing the inextensibility constraint in the context of fluttering beams, as well as tubes conveying fluid. 

For more general studies of fluttering beams and plates, we point to the classical references \cite{bolotin,dowell}. Specifically, in the discussions of piston theory, we  should mention the modern engineering references \cite{jfs,vedeneev}, classic engineering references \cite{pist1,pist2, bolotin}, and the mathematical surveys \cite{survey1,survey2}. Other mathematical references addressing piston-theoretic models include \cite{hlw, Memoires, daniellorena,springer}. The article \cite{hlw} performs numerical simulations (which guide our work in Section \ref{numerics} below); theoretically, \cite{hlw} addresses various classes of {\em attractors} arising in the clamped beam/plate configuration.  We follow much of the analysis there, in particular tracking stablity/instability of dynamics with respect to piston-theoretic terms, as well as types and size of damping effects.

	\subsection{New goals and challenges}\label{mainpoints}

	In the context of the aforementioned body of work in Section \ref{discussions}, and a no less vast field of open questions in this area, the present article accomplishes several important goals and addresses multiple technical challenges which we now articulate.

	   The reduction, here, of the system \eqref{LLsystem} to a single nonlocal beam equation comes at the cost of a highly nonlinear boundary term. This nontrivial term disallows the (standard) treatment of the interior nonlinearity as a perturbation of the linear dynamics \eqref{linearplate}. The ensuing challenge is that  ($\om$)-dissipativity of the generator, in the presence of nonlinear Neumann condition for a hyperbolic-like problem, generally requires  that \emph{the velocity trace be well-defined for weak solutions.} 
	Traditionally that has been achieved by including boundary damping, e.g., \cite{gra-bel:12:JDE}, as well as \cite{beam1,beam4}. Instead, we opt to include rotational inertia terms to boost the regularity of the velocity, while also demonstrating that the system admits a suitable monotonic structure. The work here---taken in conjunction with \cite{beam1,beam4} (and related references)---further indicates a strong need for some mitigating factor in addressing boundary traces of the form $w_t(L)$ for nonlinear, cantilevered models. 
	    
	    To obtain a semigroup formulation, one {\em must} include the boundary conditions  in the domain the of the principal operator (due to the inextricable link between the nonlinearity and free end at $x=L$). Thus, one has to start with a fully nonlinear  generator, as opposed to writing it as a linear generator with a locally Lipschitz perturbation, as is typical in the theory of large deflections. 
	    
	  The system is not monotonically dissipative, and from the energy law it is not obvious that the energy is globally bounded in time in the first place. The dynamical system associated to solutions is of non-gradient type. While a similar framework has been considered before \cite{daniellorena}, the aforementioned non-linearity in the boundary condition prevents the applicability of the most methods \cite{Memoires, springer} to prove the asymptotic smoothness of the dynamics, and here we must adjust the ``standard" approach. 
	
	 To prompt further investigation of asymptotic regimes of this system, we provide detailed numerical analysis demonstrating the effect of the nonlinearity, critical parameters (e.g., $U$, $\alpha$, and $k_0$). This is particularly pertinent in the case $\alpha=0$, when we have only existence of weak solutions, and no energy identity for the piston-theoretic dynamics. We investigate the post-flutter regime thoroughly for $\alpha \ge 0$, which provides direction and intuition for future studies. 

\section{Well-posedness and Energy Identities}\label{sec:wellposed}
\subsection{$\alpha \ge 0$: Existence of weak solutions}

This section is devoted to the proof of Theorem \ref{thm:weak}. The theorem admits any $\al\geq 0$, but for $\al>0$ we will prove a much stronger result in the sequel, hence this section focuses on $\al=0$ and $L_\al = \ltom$ with the respective state space $\scrH_\al = H^2_*\times \ltom$. The weak formulation \eqref{weak-sol} does not recognize higher-order boundary conditions, so to build such a solution we will use the basis of eigenfunctions of the unbounded positive self-adjoint operator with compact resolvent, $\cA_0$, introduced earlier in \eqref{def:cA0}. Appropriately scaled eigenfunctions $(e_k)$ of $\cA_0$, with respective positive eigenvalues $0<\lam_1 \leq \lam_2 \leq \cdots \leq \lam_k \leq \cdots \to \infty$,  form an orthonormal basis for $\ltom$ and orthogonal basis for every $\dom(\cA_0^{s})$, $s\in \bbR$.  

Using this basis we define smooth approximations
\[
  w^n(t,x) \dfn \sum_{k=1}^n \eta_k(t) e_k(x)
\]
and consider the $n$-system of ODEs in $t$-variable for the coefficient functions $\{\eta_1, \eta_2,\ldots,\eta_n\}$ obtained by replacing $w$ with $w^n$ in \eqref{weak-sol}, and $\phi$ with $e_k$, $k=1,\ldots,n$. Since the basis is orthonormal in $L^2$, the principal part of the system is diagonal (recall, again, that we can restrict to $\al=0$):
\[
  \eta_k''(t) + P_{3,k}(\eta_1(t),\ldots, \eta_n(t)) + (k_0+\bet) \eta_k'(t) = (p_0, e_k) + \sum_{j=1}^n \eta_j (e_{jx},e_k),\quad k=1,\ldots,
\]
where $P_{3,1},\ldots, P_{3,n}$ are cubic polynomials in $n$ variables.  The initial conditions are determined by
\[
  \eta_k(0) =  \frac{1}{\lam_k}(w_0,e_k)_{H^2_*}, \quad \eta'_k(0) = (w_1,e_k).
\]
The nonlinearity is polynomial so this system of ODEs has a local solution on some interval $[0,T_0]$:
\[
  \eta \in C^2([0,T_0]; \Rn).
\]
Let $\cE^n$ denote the full energy functional corresponding to $w^n$, and $\cl{E}^n$ be the corresponding definite energy. Substituting $w^n_t$ for the test function in \eqref{weak-sol} we obtain for $ t \in [0,T_0]$
\[
   \begin{split}
     \cE^n(t)  +  (k_0 +\bet)\int_0^t \|w^n_t(s)\|ds  =\cE^n(0) + \int_0^t (p_0, w^n_t(s)) ds - \int_0^t \bet U(w^n_x(s), w^n_t(s)).
 \end{split}
 \]
 We can invoke Gr\"onwall's inequality, along with the comparison of $\cE^n$ and $\cl{E}^n$ shown in Proposition \ref{prop:compare}, to deduce that for some number $m \geq 0$ we have:
 \[
   \cl{E}^n(t) \leq  C(\cl{E}^n(0)) e^{mt} \txtforall t\in[0,T_0].
 \]
 This estimate on the energy implies that (i) any solution $\eta$, hence $w^n$, has right-maximal interval of existence $[0,\infty)$ in time; (ii) for any $T>0$, the sequence $(w^n)$  is bounded in $C([0,T]; H^2_*)$; and (iii) the sequence  $(w^n_t)$ is bounded in $C([0,T]; \ltom)$.  In particular,  $w^n$ is bounded in $H^1(0,T; \ltom)$ and, up to a subsequence, has a weak limit $w$, which in addition has the following regularity: 
 \[
   w \in H^1(0,T;\ltom) \cap L^\infty(0,T; H^2_*) \txtwith w_t \in L^\infty(0,T; \ltom).
 \]
 We have, 
 \[
   w^n \stackrel{\text{weakly}*}{\to} w  \in L^\infty(0,T; H^2_*),\quad w^n_t \stackrel{\text{weakly}*}{\to} w_t \in L^\infty(0,T; \ltom),
 \]
 and, by Aubin-Simon compactness result \cite{simon:87:AMPA}, for any $\eps>0$,
 \[
   w^n \to w \quad\text{strongly}\txtin L^\infty(0,T; H^{2-\eps}(0,L)).
 \]
Furthermore, because $H^2_*$ and $\ltom$ are separable, up to a subsequence we can claim that 
\begin{equation}\label{weak-pointwise}
  w^n(t) \stackrel{\text{weakly}}{\to} w(t) \txtand w^n_t(t) \stackrel{\text{weakly}}{\to} w_t(t) 
\end{equation}
for almost every $t\in [0,T]$.

 Taking $n\to \infty$ we see that for any $\phi\in H^2_*$ and any $\zet \in C_c^\infty((0,T))$ we have
\[    
 \begin{split}
      & -\intT (w_t, \phi) \zet'(t) dt + \intT (w,\phi)_{H^2_*}\zet(t) dt + \intT (b_2 \|w_x\|^2-b_1)(w_x,\phi_x) \zet(t)
   +\intT  (k_0+\bet)(w_t,\phi) \zet(t)dt\\
  & = \intT (p_0, \phi)\zet(t)dt +\bet U \intT (w_x,\phi)\zet(t) dt
 \end{split}
\]
which shows that $w$ verifies the variational identity \eqref{weak-sol}. From the same identity it follows that 
$ w_{tt} \in \cD'((0,T); [H^2_*]')$ and, moreover, this distribution is defined point-wise a.e.  $(0,T)$ as the element of $[H^2_*]'$ given by
\[
  w_{tt} (t) =- (w,\cdot)_{H^2_*} - (b_2 \|w_x\|^2-b_1)(w_x,\p_x \cdot )
   - (k_0+\bet)(w_t,\cdot)+ (p_0, \cdot) +\bet U(w_x,\cdot).
 \]
 Since $\|w_{tt}(t)\| \leq C \left( \|w\|_{L^\infty(0,T;H^2_*)} \right)$, then we conclude
 \[
   w_{tt} \in L^\infty(0,T; [H^2_*]').
 \]
 The regularity of $w$, $w_t$, and $w_{tt}$ now imply that  \cite[Lemma 8.1--8.2, pp. 275--276]{b:lio-mag:v1}
 \[
   w \in C_w([0,T]; H^2_*) \txtand w_t \in C_w([0,T];\ltom).
 \]
 By construction, the initial data  $w^n(0)$  and $w^n_t(0)$ converge strongly to $w_0$ and $w_1$ in $H^2_*$ and $\ltom$ respectively. The weak point-wise a.e. convergence  \eqref{weak-pointwise} and the weak continuity of $w$ and $w_t$ imply that, for every $\chi \in H^2_*$, $\psi \in \ltom$, and $t\in [0,T]$, we have
 \[
   \lim_{n\to \infty}(w^n(t), \chi)_{H^2_*}= (w(t), \chi)_{H^2_*},\quad  \lim_{n\to \infty}(w_t^n(t), \psi) = (w_t(t), \psi).
 \]
\qed 

\begin{remark} For such solutions, no comments on uniqueness are made, but the situation will improve if rotational effects are included, as discussed in the next section.
\end{remark}

\subsection{$\alpha>0$: Full Hadamard well-posedness}\label{sec:semigroup}

The discussion here is devoted to the proof of Theorem \ref{thm:semigroup} and divided into several subsections.

\subsubsection{Nonlinear elliptic preliminaries}

Consider the following nonlinear functional: 
\begin{equation}\label{def:J}
  J(v) \dfn \half D \|v_{xx}\|^2 + \fourth b_2 \|v_x\|^4  - \half b_1 \|v_x\|^2 \txtfor v\in H^2_*.
\end{equation}
The Fr\'echet derivative of $J$ at $v\in H^2_*$, as an element of the dual space $[H^2_*]'$, is given by
\[
  DJ(v)(\phi) = D(v_{xx},\phi_{xx})  + b_2 \|v_x\|^2(v_x,\phi_x) - b_1 (v_x,\phi_x) \txtforall \phi \in H^2_*\,.
\]
Next, introduce the nonlinear  operator
\begin{equation}\label{def:A}
  A v \dfn  D \p^4_x v - b_2\|v_x\|^2 v_{xx} + b_1 v_{xx}
\end{equation}
with domain
\[
\cD(A) \dfn \{ v \in H_*^2 :  Av\in \ltom,\quad v_{xx}(L) = 0,\quad D v_{xxx}(L)- \|v_x\|^2 v_{x}(L) + b_1v_x(L)  = 0 \}.
\]
From the definition we immediately have that $\dom(A) \subset H^4\cap H^2_*$, 
whence the traces are well-defined. Even without appealing to full $H^4$ regularity, $\p_{xx}v$ is defined for such $H^2_*$ functions by duality: 
\begin{equation}\label{vxx}
  \lg v_{xx}, \psi \rg = (v_{xx},\Psi_{xx}) -(\p_x^4 v, \Psi)
\end{equation}
for any $\psi$ (in higher dimensions $\psi\in H^{1/2}(\p \Om)$) and any $\Psi\in H^2_*$  whose normal trace is $\psi$, for example 
\[
\p_x^4 \Psi \in L^2(0,L),\quad \Psi_x(L) = \psi,\qquad \Psi(L) =0.
\]
The trace $v_{xxx}$ for $v\in H^2_*$ is likewise is given by duality 
\begin{equation}\label{vxxx}
  \lg v_{xxx}, \psi \rg = (\p_x^4 v, \Psi)  - (\p_{xx} v, \p_{xx} \Psi)
\end{equation}
for any $\psi$ (in higher dimensions $\psi\in H^{1/2}(\p\Om)$) and any $\Psi\in H^2_*$ such that 
\begin{equation}\label{dom:A}
\p_x^4 \Psi\in L^2(0,L),\quad \Psi(L) = \psi,\qquad \Psi_{xx}(L) =0.
\end{equation}
Because the Dirichlet traces above order $3/2$ are not bounded with respect to the $H^2_*$ topology, the domain $\dom(A)$ is dense in $H^2_*$. Moreover, for any $v\in\dom(A)$ we have $(Av, \phi) = DJ(v)(\phi)$. 
Consequently if we associate $A$ with its closure in $H^2_*\times [H^2_*]'$ we can say that
\[
   A = DJ\txton H^2_*
\]

An additional observation will be needed due to the later discussion on rotational inertia terms. Recall the definition of $W$:
\[
  W \dfn \{ v \in H^2_*\cap H^3(0,L) : v_{xx}(L) =0\}
\]
Then for  any  $v\in W$ and $\phi \in H^2_*$ we can integrate the higher-order term by parts and obtain 
\[
  (Av,\phi) = - D(v_{xxx}, \phi_x)  + b_2 \|v_x\|^2(v,x \phi_x) - b_1(v_x,\phi_x) 
\]
This action extends continuously to any $\phi \in H^1_*$. Thus we have~ $A: W \to [H^1_*]'.$

\subsubsection{Weak nonlinear problem}

We will take advantage of the following nonlinear existence result: 

\begin{lemma}\label{lem:Av=F} Let $J$ be given by \eqref{def:J}. For any $\cF \in [H^2_*]'$, $\lam\geq 0$, and $\mu\in \bbR$ there exists $v\in H^2_*$ such that
  \begin{equation}\label{weak-eqn}
    \lam (v,\phi)_{H^2_*} + \mu (v,\phi)_{H^1_*}+ DJ(v)(\phi) = \cF(\phi)\txtforall \phi\in H^2_*,
\end{equation}
Or, equivalently, for $R$ as in \eqref{def:R}, $A$ as in \eqref{def:A}, and $C$ as in \eqref{def:C}, there exists a weak solution to
\begin{equation}
(\lam R+ \mu C +A)(v) = \cF  \txtin [H^2_*]'.
\end{equation}
In addition, if $\cF \in [H^1_*]'$, then $v \in W$. And if $\cF$ is given by integration against an $\ltom$ function $f$, then $v\in \dom(A)$.
\end{lemma}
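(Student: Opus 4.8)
The plan is to establish existence of a weak solution to \eqref{weak-eqn} via the direct method of the calculus of variations, by minimizing the functional
\[
  \Phi(v) \dfn \tfrac{\lam}{2}\|v\|_{H^2_*}^2 + \tfrac{\mu}{2}\|v\|_{H^1_*}^2 + J(v) - \cF(v),\qquad v\in H^2_*.
\]
First I would check that $\Phi$ is coercive on $H^2_*$: the $J$ term contains $\tfrac14 b_2\|v_x\|^4 - \tfrac12 b_1\|v_x\|^2$ plus $\tfrac12 D\|v_{xx}\|^2 = \tfrac12 \|v\|_{H^2_*}^2$, and the quartic dominates the (possibly negative) quadratic term after a Young-type estimate (this is exactly the polynomial-minimum bookkeeping already used in Proposition \ref{prop:compare}), while $\cF(v)$ is controlled by $\eps\|v\|_{H^2_*}^2 + C_\eps\|\cF\|_{[H^2_*]'}^2$; the $\lam,\mu$ terms only help. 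Hence $\Phi(v)\to+\infty$ as $\|v\|_{H^2_*}\to\infty$ and $\Phi$ is bounded below. Next I would verify weak lower semicontinuity: $\|v\|_{H^2_*}^2$ and $\|v\|_{H^1_*}^2$ are convex continuous hence weakly l.s.c.; the map $v\mapsto \|v_x\|^2$ is weakly continuous on $H^2_*$ because $H^2_*\hookrightarrow H^1$ is compact (so $v_x$ converges strongly in $L^2$), which makes both $\tfrac14 b_2\|v_x\|^4$ and $-\tfrac12 b_1\|v_x\|^2$ weakly continuous, and $\cF$ is weakly continuous being linear and bounded. Therefore a minimizer $v\in H^2_*$ exists, and the Euler--Lagrange equation $\Phi'(v)=0$ is precisely \eqref{weak-eqn}, using the formula for $DJ$ recorded just above the lemma and the fact that $\tfrac{d}{dt}\big|_{t=0}\tfrac12\|v+t\phi\|_{H^2_*}^2 = (v,\phi)_{H^2_*}$, similarly for $H^1_*$.

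It remains to prove the regularity upgrades. For the case $\cF\in[H^1_*]'$: the weak identity reads $\lam(v,\phi)_{H^2_*} = \cF(\phi) - \mu(v,\phi)_{H^1_*} - b_2\|v_x\|^2(v_x,\phi_x) + b_1(v_x,\phi_x)$ for all $\phi\in H^2_*$; since $v\in H^2_*$ is already fixed, the right-hand side defines a bounded linear functional on $H^1_*$ (each term is an $H^1_*$-inner-product-type pairing, and $\|v_x\|^2$ is just a scalar), so $v$ weakly solves an equation of the form $\lam D\p_x^4 v = (\text{an }[H^1_*]'\text{ datum})$ with the natural boundary conditions $v_{xx}(L)=0$ and the third-order condition; standard elliptic regularity for the fourth-order operator on $(0,L)$ with these mixed clamped/free conditions then gives $v\in H^3(0,L)$ with $v_{xx}(L)=0$, i.e.\ $v\in W$ (here I would invoke the duality characterizations \eqref{vxx}--\eqref{vxxx} to make sense of the traces, exactly as set up in the paper; when $\lam=0$ the ``elliptic'' part degenerates and one instead reads the equation as $\mu C v = (\text{l.o.t.})$, a second-order problem, still landing in $W$, and if $\lam=\mu=0$ the claim is vacuous or follows directly from $Av=\cF\in[H^1_*]'$). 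For the case $\cF = (f,\cdot)$ with $f\in\ltom$: now the right-hand side is in $\ltom$, so bootstrapping the same elliptic equation gives $v\in H^4(0,L)$, whence $Av\in\ltom$, and the boundary conditions $v_{xx}(L)=0$ together with $D v_{xxx}(L) - \|v_x\|^2 v_x(L) + b_1 v_x(L)=0$ are recovered by integrating by parts in the weak identity against $\phi\in H^2_*$ and reading off the natural (boundary) terms; this places $v\in\dom(A)$.

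The main obstacle I anticipate is not the minimization itself but the boundary-regularity bookkeeping for the fourth-order operator with this nonstandard free-end condition: one must justify that the weak solution of $\lam D\p_x^4 v + (\text{lower order})=\cF$ with $v\in H^2_*$ and $\cF\in[H^1_*]'$ actually lies in $W$, including the precise sense in which $v_{xx}(L)=0$ holds, and — when $\lam=0$ — handle the degeneration carefully so the argument still closes through the second-order operator $C$. The nonlinear term $-b_2\|v_x\|^2 v_{xx}$ is harmless for regularity since, once $v\in H^2_*$ is fixed, $\|v_x\|^2$ is a constant and the term $\|v_x\|^2 v_{xx}\in\ltom$ is already as regular as needed; similarly $b_1 v_{xx}\in\ltom$. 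So the only genuinely delicate point is elliptic regularity up to the boundary for the linear fourth-order problem with mixed clamped/free boundary conditions, for which I would cite the standard theory (or the analysis of the operator $\cA_0$ introduced in \eqref{def:cA0}).
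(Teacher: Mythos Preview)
Your direct-minimization approach is correct and is in fact cleaner than what the paper does. The paper reduces to $\lam=\mu=0$, then splits on the sign of $b_1$: for $b_1\le 0$ it argues that $J$ is strictly convex so $A$ is a maximal monotone subdifferential and hence surjective; for $b_1>0$ it runs a Galerkin scheme, minimizes a finite-dimensional functional on each approximating subspace, and passes to the limit using a priori bounds and the compact embedding $H^2_*\hookrightarrow H^1_*$. Your argument handles all signs of $b_1$ and all $\lam\ge 0$, $\mu\in\bbR$ in one stroke by minimizing $\Phi$ directly on $H^2_*$; the key observation that makes this work (and that the paper exploits only at the finite-dimensional level) is precisely that $v\mapsto\|v_x\|^2$ is weakly \emph{continuous} on $H^2_*$ by compactness, so even the non-convex pieces of $\Phi$ are weakly l.s.c. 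One small imprecision: you wrote ``the $\lam,\mu$ terms only help,'' but $\mu\in\bbR$ may be negative; this is harmless since $\tfrac{\mu}{2}\|v_x\|^2$ is absorbed by the quartic exactly as the $b_1$ term is (indeed the paper simply merges $\mu C$ into $A$ by redefining $b_1$).

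There is, however, a bookkeeping slip in your regularity step that creates a phantom obstacle. You wrote the weak identity as $\lam(v,\phi)_{H^2_*}=\cF(\phi)-\cdots$, but $DJ(v)(\phi)$ already contains $D(v_{xx},\phi_{xx})=(v,\phi)_{H^2_*}$. The correct rearrangement is
\[
  (\lam+1)(v,\phi)_{H^2_*}=\cF(\phi)-\mu(v_x,\phi_x)-b_2\|v_x\|^2(v_x,\phi_x)+b_1(v_x,\phi_x),
\]
so the fourth-order principal part has coefficient $(\lam+1)D\ge D>0$ for every $\lam\ge 0$. There is no degeneration at $\lam=0$, no need to fall back on a second-order problem via $C$, and the case $\lam=\mu=0$ is not vacuous. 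With this corrected, your elliptic-bootstrap argument (RHS in $[H^1_*]'$ gives $v\in W$ via the duality trace formulas \eqref{vxx}--\eqref{vxxx}; RHS in $\ltom$ gives $v\in H^4$ hence $v\in\dom(A)$ after reading off the natural boundary conditions) goes through uniformly and matches the paper's treatment of regularity.
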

\begin{proof}
We only consider the case $\lam=0$ since the case $\lam>0$ easily follows along the same lines.  Likewise, note that  $\mu  C$ can be merged with $A_M$ by redefining the constant $b_1$. Thus, it suffices to take~
 $\lam = 0 = \mu.$

If $b_1\leq 0$, the functional $J$ and also $v\mapsto J(v) -  \eps \frac{D}{2}\|v_{xx}\|^2$ for $\eps <1$ are both strictly convex.  So the operator $A_\eps \dfn A-\eps R$, $\dom(A_\eps)=\dom(A)$ is  the subdifferential of a convex functional and, thus,  is maximal monotone $H^2_*\to [H_*^2]'$. So 
$A = A_\eps +\eps R$ is a bijection $\dom(A)$ onto $[H^2_*]'$.

In the complementary case $b_1>0$, the convexity, and thus weak closure properties of the epigraph of $A$ are lost, but we can use a Galerkin  argument. Consider the operator $\cA_0$ introduced earlier in \eqref{def:cA0}. This time  we are going to normalize the basis of eigenfunctions $(e_k)$ so that it forms an ONB for $H_*^2$ with respect to the equivalent inner product \eqref{H2*prod}, and orthogonal basis for $L^2(0,L)$.

Let  $V_n$ be the subspace spanned by $\{e_1,e_2,\ldots, e_n\}$. On this space the finite-dimensional approximation of \eqref{weak-eqn} for $v = \sum_{i=1}^n \al_i e_i$ yields the nonlinear system
\begin{equation}\label{eqn:alpha}
  \al +(b_2 |\al|_P^2-b_1) P \al  = F\txtin \Rn\,.
\end{equation}
where $F_{i}\dfn \lg e_i, f_i\rg_{H^2_*\times [H^2_*]'}$, $P_{ij} = (\p_x e_{i}, \p_x e_{j})$ is positive-definite
and $\lg \gam, \del\rg_P = \gam^t P \del$ is an equivalent inner product on $\Rn$.

Note that the functional
\[
  \bet \mapsto \half |\bet|^2 +  \fourth b_2 |\bet|_P^4 -  \half b_1 |\bet|_P^2 - F\cdot \bet\txtfor \bet \in \Rn
\]	
is Fr\'echet differentiable and bounded below on $\Rn$. Hence any vector $\al$ corresponding to its local minimum (possibly non-unique) would yield a solution to \eqref{eqn:alpha}.

Thus, the $n$-dimensional approximation of \eqref{weak-eqn} has a solution $v^{(n)}$ on $V_n$. And satisfies
\[
  D(v^{(n)}_{xx},\phi_{xx})  + b_2 \|v^{(n)}_x\|^2(v^{(n)}_x,\phi_x) - b_1 (v^{(n)}_x,\phi_x) = \cF(\phi) \txtforall \phi \in V_n\subset H^2_*\,.
\]
Choosing $\phi = v^{(n)}$ gives an a  priori bound on $\|v^{(n)}\|$ in $H^2_*$. Hence, up to passing to a subsequence, $v^{(n)}$ converge to some $v$ weakly in $H^2_*$ and strongly in $H^1_*$. Passing to the limit $n\to \infty$ we recover a solution to \eqref{weak-eqn}.
The claimed regularity properties of the solution depending on $\cF$ are inferred by standard arguments relying on the duality representations of the traces \eqref{vxx} and \eqref{vxxx}.
\end{proof}
\subsubsection{Truncations of $A$}
Looking ahead, in order to construct local---at first---solutions, we will need ``truncated'' versions of $A$
in the spirit of \cite{lagleug,chu-ell-las:02:CPDE, boc-las:10:JDE}. Consider the following linear version of the operator $A$ defined in \eqref{def:A}:
\begin{equation}\label{def:L}
  L_M v = D\p_x^4 v - (b_2 M^2 v_{xx}- b_1) v_{xx}\txtwith M^2 > |b_1|
\end{equation}
\[
\cD(L_M) = \{ v \in H_*^2 :  L_M v\in \ltom,\quad   v_{xx}(L) = 0,\quad Dv_{xxx}- (b_2 M^2-b_1) v_{x}= 0 \}.
\]
Classical linear elliptic theory readily yields that $\lam R + L_M$ is surjective $\dom(A_M) \to L^2(0,L)$.
Next, introduce the following ``truncated'' operator
\begin{equation}\label{def:AM}
   A_M v \dfn 
   \begin{cases}
     A v  &  \|v_x\|\leq M\\
     L_M v &   \|v_x\| > M
   \end{cases}
 \end{equation}
\[
\dom(A_M) =\left\{ v \in H^2_* : \|v_x\| \leq M,  v\in \dom(A)\right\}\; \bigcup\; \left\{ v\in H^2_* : \|v_x\|>M, v\in \dom(L_M)\right\}.
\]
\begin{proposition}\label{prop:AM}
  For Riesz isomorphism $R$ in \eqref{def:R} and $A_M$ given by \eqref{def:AM}, the operator $\lam R+ A$ is surjective $\dom(A)\to L^2(0,L)$  and $W\to [H^1_*]'$ for any $\lam \geq 0$.
\end{proposition}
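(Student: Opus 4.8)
The plan is to establish surjectivity of $\lam R + A_M$ (not $\lam R + A$ --- note that the statement as written appears to have a typo and should read $A_M$, matching the operator just defined and the truncation construction being set up) by combining the nonlinear existence Lemma \ref{lem:Av=F} with the already-observed fact that $\lam R + L_M$ is surjective from classical linear elliptic theory. The key observation is that $A_M$ agrees with the genuine nonlinear operator $A$ precisely on the ball $\{\|v_x\| \le M\}$ and with the linear operator $L_M$ outside it, and these two pieces are designed to match continuously (in the sense that when $\|v_x\| = M$, the coefficient $b_2\|v_x\|^2 = b_2 M^2$ agrees, so the boundary conditions and the interior operator coincide). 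Thus a solution of $(\lam R + A_M)v = \cF$ is automatically a solution of either $(\lam R + A)v = \cF$ or $(\lam R + L_M)v = \cF$, depending only on which region the resulting $v$ lands in.

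First I would fix $\cF \in [H^2_*]'$ (resp.\ $\cF \in [H^1_*]'$, resp.\ $\cF = f \in \ltom$) and $\lam \ge 0$. Then I would solve the \emph{linear} problem $(\lam R + L_M)v = \cF$ using the asserted surjectivity of $\lam R + L_M$; call the solution $v_L$. If it happens that $\|v_{L,x}\| > M$, then $v_L \in \dom(L_M) \subset \dom(A_M)$ and $A_M v_L = L_M v_L$, so $v_L$ solves the truncated problem and we are done. Otherwise, i.e.\ if $\|v_{L,x}\| \le M$, I would instead invoke Lemma \ref{lem:Av=F} (with $\mu = 0$) to obtain $v \in H^2_*$ solving the genuine nonlinear problem $(\lam R + A)v = \cF$; the regularity clauses of that Lemma give $v \in W$ when $\cF \in [H^1_*]'$ and $v \in \dom(A)$ when $\cF \in \ltom$. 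If $\|v_x\| \le M$ then $v \in \dom(A_M)$ and $A_M v = A v$, so $v$ solves the truncated equation. This leaves only the awkward case where the nonlinear solution has $\|v_x\| > M$ while the linear one has $\|v_{L,x}\| \le M$; here one uses a continuity/connectedness argument in the parameter $M^2$ (or an a priori energy estimate): testing the equation $(\lam R + A)v = \cF$ with $v$ itself and using $b_2\|v_x\|^4 \ge 0$ together with coercivity of $\lam R$ and the $H^2_*$-ellipticity yields an a priori bound $\|v_x\|^2 \le \|v\|^2_{H^2_*} \le C(\|\cF\|)$ independent of the truncation level, so for $M$ large the genuine solution always satisfies $\|v_x\| \le M$ --- and for the purposes of the eventual local solvability one only needs surjectivity for each sufficiently large $M$, or one simply notes that at least one of the two constructions always yields a bona fide solution of $(\lam R + A_M)v = \cF$ by inspecting which region is hit.

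The main obstacle I anticipate is the bookkeeping at the interface $\|v_x\| = M$: one must verify that the ``matching'' of $A$ and $L_M$ is genuine, i.e.\ that $\dom(A) \cap \{\|v_x\| = M\}$ and $\dom(L_M) \cap \{\|v_x\| = M\}$ describe the same functions with the same boundary conditions, so that $A_M$ is well-defined (single-valued) and a solution produced by either route is legitimately in $\dom(A_M)$ with the correct image. Once that is checked, the surjectivity onto $\ltom$ (via Lemma \ref{lem:Av=F}'s last clause) and onto $[H^1_*]'$ (via the $v \in W$ clause, noting the extension $A \colon W \to [H^1_*]'$ established just before the Lemma) follow by tracking the regularity of $\cF$ through whichever of the two solvers is used. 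The argument is otherwise a routine patching of a nonlinear-but-bounded-nonlinearity solver with a linear solver, exactly in the spirit of the truncation technique of \cite{lagleug, chu-ell-las:02:CPDE, boc-las:10:JDE}.
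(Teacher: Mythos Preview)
You have correctly identified the typo ($A$ should be $A_M$) and correctly set up the dichotomy: solve both the full nonlinear problem $(\lam R + A)v = \cF$ and the linear problem $(\lam R + L_M)v_L = \cF$, and observe that if the nonlinear solution lands inside the ball $\{\|\cdot_x\|\le M\}$ or the linear one lands outside, you are done. The gap is in your handling of the remaining ``awkward case'' where $\|v_x\|>M$ and $\|v_{L,x}\|\le M$. In that case \emph{neither} $v$ nor $v_L$ lies in $\dom(A_M)$ with the correct image: $v$ satisfies the nonlinear boundary condition but $A_M v = L_M v \ne A v$, while $v_L$ satisfies the linear boundary condition but $A_M v_L = A v_L \ne L_M v_L$. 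Your claim that ``at least one of the two constructions always yields a bona fide solution'' is therefore exactly wrong in this case. Your fallback a~priori estimate $\|v_x\|\le C(\|\cF\|)$ only helps for $M$ large \emph{relative to the datum} $\cF$; but the proposition is invoked in the maximality step for $\bbA_M$, where $M$ is fixed (by the initial data) and $\cF$ ranges over all of $[H^1_*]'$, so this does not close the argument.

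The paper resolves the awkward case not by producing a solution but by showing it \emph{cannot occur}. Writing $\nu=\|v_x\|>M\ge\|w_x\|$ (with $w=v_L$), subtracting the two weak formulations, and testing with $\phi=v-w$ gives
\[
(\lam+D)\|v_{xx}-w_{xx}\|^2 + (M^2-b_1)\|v_x-w_x\|^2 + (\nu^2-M^2)(v_x,\,v_x-w_x)=0.
\]
Since $M^2>|b_1|$ the first two terms are nonnegative, forcing $(\nu^2-M^2)\big(\|v_x\|^2-(v_x,w_x)\big)\le 0$, hence $\|v_x\|\le\|w_x\|\le M<\nu$, a contradiction. This monotonicity-type comparison is the missing idea; once you have it, the rest of your outline is correct.
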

\proof 
This statement follows individually for $A$ from Lemma \ref{lem:Av=F}, and for $L_M$ as a simpler case.  The only obstacle would be if for some $\cF \in [H^2_*]'$ we have
\begin{equation*}
  \lam R v+ Av = \cF \txtand   \lam R w + L_M w = \cF
\end{equation*}
where $\nu \dfn \|v_x\| > M \geq  \|w_x\|$. So $\cF$ would be in the image of $\lam R+A$ and of $\lam R +L_M$, but not in the image of $A_M$. We will show that this scenario cannot happen. The weak formulations of the above identities respectively read:
\[
  (\lam+D)(v_{xx},\phi_{xx}) + (\nu^2 -b_1)(v_x,\phi_x)  =  \cF(\phi) \txtforall \phi\in H^2_*, 
\]
\[
(\lam+D) (w_{xx},\phi_{xx}) + (M^2-b_1)(w_x,\phi_x) =  \cF(\phi) \txtforall \phi\in H^2_*.  
\]
For any particular $\phi$ consider the difference of these identities:
\[
 (\lam +D) (v_{xx}-w_{xx},\phi_{xx}) + ( [\nu^2 - M^2] v_x + [M^2-b_1] (v_x - w_x),\phi_x) =  0\,.  
\]
Substitute $\phi = v-w$ and appeal to the fact that $M^2> |b_1|$ to arrive at:
\[
   ( [\nu^2 - M^2] v_x , v_x-w_x ) \leq 0 \iffq   \|v_x\|^2 \leq (v_x,w_x)
\]
Since $\|v_x\|=\nu>M\geq \|w_x\|$, then we have a contradiction $M<\nu \leq M$.
\qed

\subsubsection{Semigroup generation}

For semigroup analysis purposes we can regard in \eqref{Bergerplate} the interior linear (or, more generally, Lipschitz) terms that are continuous with respect to $\scrH_\al$ topology as a bounded perturbation  $\tl{p}(x,t) \dfn p_0(x) -k_0 w_t - \bet(w_t + U w_x)$ which  does not affect semigroup generation.  First, consider the case when $\tl{p}\equiv 0$. We start by formulating a suitable operator-theoretic analog of the problem. Recall $C$ and $C_\al= I+\al C$ from \eqref{def:C}, \eqref{def:Cal}. For $\al>0$ we can close $C$ and $C_\al$ to  topological isomorphisms from $\dom(C_\al^{1/2}) =\dom(C^{1/2})\cong H^1_*$ to $[H^1_*]'$. Introduce the evolution generator  on $\scrH_\al$
\[
   \bbA \vecn{w\\v}\dfn 
   \vecn{v \\ -C_\al^{-1}Aw - C_\al^{-1} \al k_0 C v } 
\]
Recall the previously defined space $W = H^2_*\cap H^3(0,L)$ subject to the boundary condition $v_{xx} = 0$. Then  we define
\[
  \dom(\bbA) = \{ (w,v) : w \in W, v\in H^2_*\}.
\]
The original system \eqref{Bergerplate} can be represented as an evolution problem on $\scrH_\al$
\begin{equation}\label{y-formulation}
  y' = \bbA y,\quad y(0) = y_0\dfn \vecn{w_0 \\ w_1}\in \scrH_\al.
\end{equation}
First, we consider the ``truncated'' system
\[
  y'_M = \bbA_M (y_M),\quad y_M(0)=y_0\in \cH_\al.
\]
\[
  \bbA_M\vecn{w\\v} \dfn \vecn{v \\ -C_\al^{-1}A_M (w) - C_\al^{-1} \al k_0 C v },\quad \dom(\bbA_M) = \dom(\bbA)
\]
We claim that $\bbA_{M}$ is $\om$-dissipative. Suppose first $y,z\in \dom(\bbA_{M})$ such that $\|\p_x y_1\|\leq M$ and $\|\p_x z_1\| \leq M$. Using the definition of \eqref{Hal-prod} and \eqref{H1-prod} we obtain
\begin{equation}\label{A0M-dissip}
  \begin{split}
  &  (\bbA_{M} y - \bbA_{M} z,  y - z )_{\scrH_\al} \\
  =&~ (b_2 \|y_{1x}\|^2-b_1) (z_{1x}-y_{1x}, y_{2x}-z_{2x}) \\
  & + b_2 (\|z_{1x}\|^2 - \|y_{1x}\|^2) (z_{1x}, y_{2x}-z_{2x})- \al k_0(Cy_2-Cz_2,y_2-z_2) \\
  \leq &~  (b_2 M^2 +|b_1|) \|z_{1x}-y_{1x}\|\| y_{2x}-z_{2x}\| + b_2 M \|z_{1x}+y_{1x}\|\|z_{1x}-y_{1x}\|\|y_{2x}-z_{2x}\|\\
  &+ \al k_0 \gam \|u_2-z_2\|_{L_\al}^2\\
  \leq &~ c (b_2 M^2+|b_1|) \|y-z\|_{\scrH_\al}^2.
\end{split}
\end{equation}
\begin{remark}
  If $\al=0$ then the estimate \eqref{A0M-dissip} cannot be repeated with respect to $\mathscr H_0= H^2_*\times \ltom$ because of the term $\|y_{2x}-z_{2x}\|$ on the RHS. This term corresponds to spatial derivatives of the velocity variables of the solutions, whereas velocities are not in $H^1(0,L)$ unless $\al>0$.
\end{remark}
If we assume that $\|\p_x y_1\| > M$ and $\|\p_x z_1\| \leq M$, then in the first terms after the equal sign in \eqref{A0M-dissip}   (ignoring $\al$) become
\[
  \begin{split}
   (b_2 M^2-b_1) (z_{1x}-y_{1x}, y_{2x}-z_{2x}) + b_2 (\|z_{1x}\|^2 - M^2) (z_{1x}, y_{2x}-z_{2x})
\end{split}
\]
Now using the 	fact that $\|y_{1x}\|^2 - \|z_{1x}\|^2 \geq M^2 - \|z_{1x}\|^2 > 0$ we can estimate them  as
\[
  \cdots \leq  b_2 M^2 \|z_{1x}-y_{1x}\| \| y_{2x}-z_{2x}\| + b_2 (\|y_{1x}\|^2 - \|z_{1x}\|^2) \| z_{1x}\| \| y_{2x}-z_{2x}\|
\]
from which the same same estimate as in \eqref{A0M-dissip} follows. The same calculation holds if $\|\p_x y_1\| \leq M$ and $\|\p_x z_1\|>M$, and, finally, the case when each of the norms exceeds $M$ is immediate as the operators involved in that case are linear.

Thus $\bbA_{M}$ is an $\omega$-dissipative operator on $\scrH$. Let's show that it is also maximal, that is, $-\bbA$ is $m$-accretive. To this end we must solve the system $  \lam y - \bbA_{M}(y) = F \in \scrH_\al$ for any $\lam>0$, say $\lam=1$, which amounts to 
\[
  \vecn{w - v\\ v +C_\al^{-1}A_M (w) + C_\al^{-1} \al k_0 C v  } = \vecn{g \in H^2_*\\ h\in L_\al = H^1_*}.
\]
Substitute $v= w-g$ into the second equation to obtain (also recall $C_\al = I + \al C$)
\[
 w+ \al (1+k_0)C w + A_M (w)   = C_\al h + g + \al(1+k_0) Cg 
\]
Note that because $g\not\in \dom(C)$, then the action $Cg$ is only permitted by using  the closure of $C$ to $H^1_*\times [H^1_*]'$, whence $Cg$ is interpreted as an element of $[H^1_*]'$. Thus we conclude that $C_\al h + g + \al(1+k_0) Cg  \in [H^1_*]'$, and therefore this problem has a solution $w \in W$ as verified by Proposition \ref{prop:AM}. 

So $\bbA_{M}$ is an m-$\om$-dissipative operator on $\scrH_\al$.
Thus, by Kato's theorem (e.g., see \cite[Thm 4.1 and 4.1A, pp. 180, 183]{b:showalter:97} or \cite[Thm. 1.6, p. 216]{b:barbu:93}), the evolution problem
\begin{equation}\label{y-M-formulation}
  (y^M)' = \bbA_{M} y,\quad y^M(0) = y_0
\end{equation}
has a unique solution rendered by a nonlinear semigroup flow $t\mapsto \cS(t) y_0$ on $\scrH_\al$. Such solutions reside in $C([0,T]; \scrH_\al)$ for any $T>0$, and moreover, can be approximated in this topology by strong solutions whose initial data resides in the (dense) domain $\dom(\bbA_M)$.

We still need to link the ``truncated'' generator $\bbA_M$ with solutions of the original system.
A simple optimization argument shows that $b_2>0$ and any $b_1\in \bbR$, 
\[
  s^2 \leq \fourth b_2 s^4 - \half b_1 s^2 + C_{b_1,b_2} \txtwith C_{b_1,b_2}\dfn - \frac{\max\{0,b_1+2\}^2}{4 b_2}.
\]
In particular,  $\|w_x(t)\|^2 \leq  \cE(t) + C_{b_1,b_2}$ for any $t\geq 0$. Fix any $M>\max\{\sqrt{\cE(0) + C_{b_1,b_2}},\; \sqrt{|b_1|} \}$  (the $\sqrt{|b_1|}$ constraint is not used right here and instead comes from the previous considerations in \eqref{def:L}). Then  then $\|\p_x w_0\| < M$. By continuity of the trajectory in $\scrH_\al$, the inequality persists for $w(t)$ on some maximal finite interval $[0,T_1]$. From the energy identity \eqref{ei} (recall that  for well-posedness analysis we reduced to the case $U=0$, $p_0=0$) we have 
\[ 
  \cE(T) + C_{b_1,b_2}\leq \cE(0)+C_{b_1,b_2}  \txtfor T\in [0,T_1].
\]
But then $\|\p_x w(T_1)\|<M$ contradicting the construction of $T_1$. Hence,
\[
  \|w_x\| \leq M\txtforall t\geq 0.
\]
In this case, the solution to the truncated system \eqref{y-M-formulation} coincides with a solution to the original system \eqref{y-formulation} for all $t\geq 0$. Conversely, any solution to \eqref{y-formulation} coincides with the unique solution to the $M$-system (with $M>\sqrt{|b_1|}$) on any  time interval $[0,T]$ on which $\|u_x\|$ is bounded above by $M$.

Finally, the existence result for weak and strong solutions of the forced system \eqref{Bergerplate} with forcing $\tl{p}(x,t)=p(x,t) - (k_0+\bet) w_t(x,t)$, readily follows  when either  $p\in W^{1,1}_{loc}([0,\infty);L^2(0,L))$, or when $p(x,t) = L(x, w(x,t), w_t(x,t))$ where operator $L$ is Lipschitz $\scrH_\al\to L_\al$, in the sense  that 
\[
  \| L(\cdot, w,v)-L(\cdot, \tl{w},\tl{v})\|_{L_\al}\leq C( \|v-\tl{v}\|_{L_\al} + \|w - \tl{w}\|_{H^2(0,L)}),
\]
see  \cite[Prop. 4.1 \& Coro. 4.1, pp. 180--181]{b:showalter:97}.

 \section{Long-time Behavior}
This section is divided into subsections concerning the dissipativity and asymptotic smoothness (and thus asymptotic compactness) for the dynamical system $(\mathscr H_{\alpha},\mathcal S(t))$, $\alpha>0$. Recall that for $\alpha=0$, we do not verifiably have a dynamical system to analyze. However, since weak solutions exist by Theorem \ref{thm:weak}, we do make some comments below in the case of $\alpha=0$. Regardless of the value of $\alpha \ge 0$, we emphasize that  the boundedness of trajectories does not follow immediately from the energy identity \eqref{ei}. The key feature here is that the nonlinear energy provides control of the energy building ``piston" terms in the energy identity.

 \subsection{$\alpha>0$: Absorbing ball}
We now prove Proposition \ref{ball}---the case of rotational inertia. 

\begin{proof}[Proof of Proposition \ref{ball}]
Let us start with a couple of useful consequences of the energy identity \eqref{ei}. Using the definition \eqref{def:energy2} of $\cl{E}$ and   Proposition \ref{prop:compare} we have, via Gr\"onwall's estimate
\begin{equation}\label{exp-bound}
  \cl{E}(t) \leq  \cl{E}(s) \exp^{\om T} \txtforall t\in [s,s+T],\; s\geq 0.
 \end{equation}
 Since the system is autonomous, then by merely relabeling the start and end times we can apply \eqref{ei} on interval $[t,T]$.  
 
For convenience, let's introduce
\[
  k\dfn k_0+\bet \txtand \tl{\bet} \dfn-\bet U\,.
\]
Then the energy identity \eqref{ei} reads 
\[
  \cE(T) +\al k_0 \int_t^T \|w_{tx}(s)\|^2 ds +  k \int_t^T  \|w_t(s)\|^2 ds  =\cE(t)+\int_t^T (p_0+ \tl{\bet} w_x(s), w_t(s))ds \txtfor T\geq t.
\]
Integrate with respect to $t\in[0,T]$, to we arrive at 

\begin{equation}\label{TET}
  \intT \cE(t) dt =  T \cE(T) +  \intT \int_t^T (k\|w_t(s)\|^2 + \al k_0 \|w_{tx}\|^2) ds dt - \intT \int_t^T (p_0+ \tl{\bet} w_x(s), w_t(s)) ds dt.
 \end{equation}

To take advantage of the above estimates we will also need multiplier identities. As usual, the multiplier calculations are applied to strong solutions end extended to weak ones by the density of the domain of the generator and the continuous dependence on the data in the finite energy space $\scrH_{\alpha}$. Using multiplier $w$ on \eqref{Bergerplate} gives
  \[
    \begin{split}
      \intT (\|w_t\|^2+\al  \|w_{tx}\|^2)  =& \intT (D\|w_{xx}\|^2 + b_2\|w_x\|^4 - b_1 \|w_x\|^2) + \intT (kw_t+\al k_0 w_{tx} - p_0 - \tl{\bet}w_x, w)\\
    &+ \bigg[ (w_t,w) + \al (w_{tx},w_x) \bigg]_0^T
\end{split}
\] 
Hence for $0< c \leq \min\{k,k_0\}/2$ 
\begin{equation}\label{equipart}
    \begin{split}
      & c\intT (D\|w_{xx}\|^2 + b_2\|w_x\|^4 - b_1 \|w_x\|^2)\\
      \leq & \intT \left( \frac{k}{2}\|w_t\|^2+\frac{\al k_0}{2}\|w_{tx}\|^2 \right)\\ 
      & -c\intT (kw_t+\al k_0 w_{tx} - p_0 - \tl{\bet} w_x, w)
    - c\bigg[ (w_t,w) + \al (w_{tx},w_x) \bigg]_0^T
\end{split}
\end{equation} 
Next, invoke the energy identity \eqref{ei} and combine it with \eqref{equipart} 
\[
  \begin{split}
    &  \cE(T) + \frac{k}{2} \intT\|w_t\|^2 + \frac{\al k_0}{2}\intT\|w_{tx}\|^2 + c \intT(D\|w_{xx}\|^2 + b_2\|w_x\|^4-b_1\|w_x\|^2)  \\
    \leq\; & \cE(0) + \intT (p_0+\tl{\bet}w_x, w_t) - 
    c\intT (kw_t +\al k_0 w_{tx} - p_0-\tl{\bet}w_x,w)  - c \bigg[ (w_t,w) + \al (w_{tx},w_x) \bigg]_0^T.
\end{split}
\]
The terms on the left dominate a multiple of the integral of the full energy $\cE$,  and the right-most term can be estimated via \eqref{energy-compare2}:
\[
  \begin{split}
    \cE(T) +  c_1 \intT \cE(t) dt \leq& \cE(0) + \intT (p_0+\tl{\bet}w_x, w_t) \\
    &- c\intT  (k w_t + \al k_0 w_{tx} -p_0- \tl{\bet} w_x, w)
   + C (\cE(0) + \cE(T))+C_2
  \end{split}
\]
Apply \eqref{TET} to rewrite half of $c_1\intT \cE(t) dt$ on the left, and use \eqref{ei} to rewrite $\cE(T)$ on the right:
\[ 
  \begin{split}
    &  \cE(T) + \frac{c_1}{2}\left( T \cE(T) +\intT \int_t^T (k \|w_t(s)\|^2 +\al k_0\|w_{tx}\|^2) - \intT \int_t^T (p_0+ \tl{\bet} w_x, w_t)  \right) +  \frac{c_1}{2}\intT \cE(t) \\
    \leq & \cE(0) + \intT (p_0+\tl{\bet}w_x, w_t) - c\intT  (k w_t + \al k_0 w_{tx} -p_0- \tl{\bet} w_x, w)
   +C\cE(0)\\
   &+  C\left( \cE(0) -  \intT k\|w_t\|^2 - \intT \al k_0 \|w_{tx}\|^2 + \intT (p_0+ \tl{\bet}w_x, w_t) \right) + C_2
 \end{split}
\]
Drop the (non-negative) double integral of $k\|w_t(s)\|^2+\al k_0\|w_{tx}\|^2$ on the left, and move $\ds \intT \int_t^T (p_0+ a w_x, w_t) $ to the RHS. The Cauchy-Schwartz inequality yields,  for any $\eps>0$, 
\[ 
  \begin{split}
     & \cE(T) + \frac{c_1 T}{2} \cE(T)  +  c_1\intT \cE(t) \\
     \leq & (1+2C)\cE(0) + C_2 +  \eps \intT (\|w_t\|^2+\|w_{tx}\|^2) + C_3 \eps^{-1}(1+T) \intT (\|p_0\|^2+ \|w_x\|^2 +\|w\|^2).
  \end{split}
\]
If we rewrite the integral of the energy on the left using the estimates of Proposition \ref{prop:compare} (in particular, $\const + \cE(t) \geq \half \cl{E}(t)$) we arrive at
\[ 
  \begin{split}
    &    \cE(T) + \frac{c_1 T}{2} \cE(T)  +  \frac{c_1}{4}\intT (\|w_t\|^2+ \al \|w_{tx}\|^2+ D \|w_{xx}\|^2 + b_2\|w_x\|^4)  -  c_1 T C_{b_1,b_2}\\
    \leq & (1+2C)\cE(0) + C_2 +  \eps \intT (\|w_t\|^2+\|w_{tx}\|^2) + C_3 \eps^{-1}(1+T) \intT (\|p_0\|^2+ \|w_x\|^2+\|w\|^2),
    \end{split}
\]
wherein for the choice $\eps \leq (c_1/4)\min\{1,\al\}$, the kinetic term on the RHS can be absorbed into the left:
\[
  \begin{split}
&  \cE(T) + \frac{c_1 T}{2} \cE(T)    +  \frac{c_1}{4}\intT (D \|w_{xx}\|^2+b_2\|w_x\|^4)\\
  \leq & (1+2C)\cE(0) + C_2 + C_3 \eps^{-1}(1+T) \intT( \|p_0\|^2+ \|w_x\|^2+\|w\|^2)+c_1 T C_{b_1,b_2}.
  \end{split}
\]
Note that,  for any $\del>0$, there exists a constant $K_{p_0,\del,\eps,T}$ independent of the solution, such that
\[
  C_3 \eps^{-1}(1+T)(\|p_0\|^2 + \|w_x\|^2 + \|w\|^2) \leq  \del \|w_x\|^4  + K_{p_0,\del,\eps,T}.
\]
Thus, for $\del < c_1/4$, and abbreviating $K_{p_0, \del,\eps,T}$ as $K_T$,  arrive at 
 \[
   \cE(T) + \frac{c_1 T}{2}  \cE(T)  \leq (1+2C) \cE(0)  +  T K_{T}.
\]
At this point let's switch to the positive definite energy $\cl{E}$,  \eqref{def:energy2}. By means of Proposition \ref{prop:compare}, infer:
\[
  \cl{E}(T) + c_2 T \cl{E}(T) \leq c_3 \cl{E}(0)  + \cK_T
\]
for positive constants $c_1, c_2$ and $\cK_T$  independent of the solution itself. Divide by $c_2 T$ with $T$ large enough to provide $\sig \dfn \frac{c_3}{1+c_2 T}<1$:
\[
   \cl{E}(T) \leq \sig \cl{E}(0) +  \cM_T\txtwith  0<\sig <1
\]
where, again, constant $\cM_T$  is independent of the solution. Consequently,
\[
   \cl{E}(nT) \leq \sig^n \cl{E}(0) + \cM_T \sum_{j=0}^{n-1} \sig^j.
\]
In particular, for $\ds n>n_0\dfn -\frac{\ln\big(\max\{1,\cl{E}(0)\}\big)}{\ln(\sig)} $ (non-negative since $\sig\in(0,1)$), we have
\[
  \cl{E}(nT) \leq  1 + \cM_T  \frac{1}{1-\sig}.
\]
Any $t>0$ can be written as $t = m T + \tau$ for $m>n_0$ and  $\tau \in [0,T)$. Then via \eqref{exp-bound}
  \[
   \cl{E}(t) \leq  c \cl{E}(mT) e^{\om T} \leq  c e^{\om T} \left( 1+ \frac{\cM_T}{1-\sig} \right)
 \]
for $t$ bigger than some $t_0$ dependent on $\cl{E}(0)$. Since $\cl{E}(0)$ is coercive with respect to the $\scrH_\al$-norm of the initial data, then this inequality confirms the existence of an absorbing ball.

\end{proof}
\begin{remark}[Conjecture for $\al=0$]\label{alf}
  The same proof as above {\em formally} carries through when $\al=0$ (mutatis mutandis), thus suggesting a bounded absorbing set for the non-rotational model. However, with $\al=0$, it is not known if the solutions are unique, hence that model may have to be considered in the context of generalized semi-flows (e.g., \cite{ball:87:JNS}).  Also the energy and the equipartition identities cannot be cited from the semigroup version, which admits them for {\em strong solutions} and extends by density. It appears plausible, nonetheless, that the non-rotational system shares long-term behavior features with the rotational one. We will examine the case $\al=0$ numerically in Section \ref{numerics} and, in fact, provide some numerical evidence of boundedness of solutions in the non-rotational case.
\end{remark}

\subsection{$\alpha>0$: Asymptotic compactness}
For $\alpha>0$, we show the {\em asymptotic smoothness} property via Theorem \ref{psi}. 
With dissipativity established, once asymptotic compactness of $(\mathscr H_{\alpha},\mathcal S(t))$ is shown, Theorem \ref{dissmooth} guarantees the existence of a compact global attractor. (This is the approach taken for obtaining global attractors in \cite{beam3,beam4} for the cantilevered extensible beam taken with boundary damping and sources.)

For brevity, in the subsequent estimates we will occasionally use the following notation:
\[
  \|w\|_\th \dfn \|w\|_{H^\th(0,L)}\,.
\]
In particular, for any solution $w$, $\|w_t\|_1$ is equivalent to $\|w_t\|_{H^1_*}$ and  
$\|w\|_2\equiv \|w\|_{H^2_*}$.

To begin, we consider some invariant (with respect to $\mathcal S(t)$), bounded set $\mathscr B \subset \mathscr H_{\alpha}$. There exists $R$ such that $\mathscr B\subset B_R(\mathscr H_{\alpha})$. With the existence of the absorbing ball for $(\mathscr H_{\alpha},\mathcal S(t))$ in place, we know there is a time $T_R$ so that $\mathscr B$ is absorbed by the absorbing ball. By the continuity of the energy functional (or by Gr\"{o}nwall) in Theorem \ref{thm:semigroup}, and the equivalence described in Proposition \ref{prop:compare} on $[0,T_R]$, we may restrict our attention to
$w^1,w^2 \in \mathscr B$ satisfying
\begin{equation*}
\|w^1(t)\|_{2}+\|w^1_{t}(t)\|_{1}+\|w^2(t)\|_{2}+\|w^2_{t}(t)\|_{1}\leq
C(R),\quad t>0. \end{equation*}
This assumption on the boundedness of trajectories will stand throughout this section.

As one can see by Theorem \ref{psi}, the key to asymptotic compactness relies on estimating the difference to two trajectories. To do so, one considers the following difference system, where $w^i$ each satisfy \eqref{Bergerplate}, $z=w^1-w^2$ and we write $B(w) = (b_1-b_2\|w_x\|^2)$:
\begin{equation}\label{difference}
\begin{cases}
(1-\alpha\partial_x^2)z_{tt}+D\partial_x^4z-\alpha k_0\partial_x^2z_t+kz_t+ \big(B(w^1)[w^1]_{xx}-B(w^2)[w^2]_{xx}\big) = -\beta Uz_x \\
z(0)=z_x(0)= 0 \\
z_{xx}(L)=0;~~-\alpha\partial_x[z_{tt}(L)+k_0z_t(L)]+D\partial_x^3z(L)+B(w^1)[w^1]_x(L)-B(w^2)[w^2]_x(L)=0\\
z(0)=w^1_0-w^2_0;~~z_t(0)=w^1_1-w^2_1.
\end{cases}
\end{equation}
(Note: as above, we are taking $k=k_0+\beta$, with $\beta>0$.) We let $\cF(z) =B(w^1)[w^1]_{xx}-B(w^2)[w^2]_{xx}$. We also introduce a modified energy for the $z$ trajectories:
\begin{equation}
E_z(t) :=\dfrac{1}{2}\Big\{D\|z_{xx}\|^2+ \|z_t\|_{L_{\alpha}}^2\Big\}.
\end{equation}
As is typical, the key term to be estimated for long-time behavior analysis is $\ds \int_s^t\left(\cF(z),z_t\right)d\tau.$ 
This term, however, is not compact in any obvious way.
In line with previous analyses \cite{Memoires,hlw,beam4}, 
\begin{align}\nonumber
\cF(z)=&~(b_1-b_2\|w^1_x\|^2)[w^1]_{xx}-(b_1-b_2\|w^2_x\|^2)[w^2]_{xx}\\
=&~B(w^1)z_{xx}+\left[B(w^1)-B(w^2)\right][w^2]_{xx}.\label{nondiff}
\end{align}
 \begin{remark} Unlike analyses involving clamped or hinged boundary conditions, this decomposition is more challenging  because of the nonlinear boundary conditions; indeed, the inextricable link between the elasticity operator, the rotational terms, and the nonlinearity (through the boundary condition), make estimation of differences challenging.  This is why the nonlinearity cannot be treated as a perturbation of the linear dynamics, as is typically the case with $f$ coming from the theory of large deflections \cite{lagnese,springer}.
 In fact, for $f(w)=(b_1-b_2\|w_x\|^2)w_{xx}$, when $w \in H_0^1(0,L)\cap H^2(0,L)$ (and hence in the context of fully clamped or hinged beams), we see that $f$ is locally Lipschitz into $L^2(0,L)$.  \end{remark}

Proceeding to  analyze the key term, we see that we must consider the elastic and inertial components in conjunction with the nonlinearity due to the boundary conditions. We first provide an identity that will be used for the equipartition multiplier $z$ and the energy multiplier $z_t$.

\begin{lemma} For $(z,z_t,z_{tt})$ a strong solution to \eqref{difference} and $\phi \in H^2_*$ we have the identity:
\begin{equation} 
  \begin{split}
    \big(-\alpha\partial_{x}^2(z_{tt}+k_0z_t)+& D\partial_x^4z+\mathcal F(z),\phi\big) \\
=&~\Big(\alpha\partial_x(z_{tt}+k_0z_t)-D\partial_x^3 z-B(w^1)z_{x}-\left[B(w^1)-B(w^2)\right][w^2]_{x}, \phi_{x}\Big)\\
=&~\alpha(\partial_xz_{tt},\phi_x)+\alpha  k_0(\partial_xz_t,\phi_x)+D(z_{xx},\phi_{xx})-B(w^1)(z_x,\phi_x) \\
& -\left[B(w^1)-B(w^2)\right]([w^2]_{x},\phi_x) \label{multiplier}
\end{split}
\end{equation}
\end{lemma}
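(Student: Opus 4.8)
The identity \eqref{multiplier} is purely an integration-by-parts statement about the operator $-\alpha\partial_x^2(z_{tt}+k_0z_t)+D\partial_x^4z+\cF(z)$: it uses only the two boundary conditions appearing in \eqref{difference} (namely $z_{xx}(L)=0$ and the dynamic third-order condition), not the interior equation, together with the regularity $z(t)\in W=H^2_*\cap H^3(0,L)$, $z_t(t)\in H^2_*$, $z_{tt}(t)\in H^1_*$ enjoyed by strong solutions. I would fix $\phi\in H^2_*$, substitute the decomposition \eqref{nondiff}, $\cF(z)=B(w^1)z_{xx}+[B(w^1)-B(w^2)][w^2]_{xx}$, and integrate by parts once in each of the terms $-\alpha(\partial_x^2(z_{tt}+k_0z_t),\phi)$, $D(\partial_x^4 z,\phi)$, $(B(w^1)z_{xx},\phi)$, and $([B(w^1)-B(w^2)][w^2]_{xx},\phi)$. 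Every boundary contribution at $x=0$ vanishes because $\phi(0)=0$.

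At $x=L$ the four boundary contributions all carry a factor $\phi(L)$ and sum to $\phi(L)$ times
\[
  -\alpha\partial_x(z_{tt}+k_0z_t)(L)+D\partial_x^3z(L)+B(w^1)z_x(L)+[B(w^1)-B(w^2)][w^2]_x(L).
\]
Writing $[w^1]_x=z_x+[w^2]_x$ gives $B(w^1)[w^1]_x(L)-B(w^2)[w^2]_x(L)=B(w^1)z_x(L)+[B(w^1)-B(w^2)][w^2]_x(L)$, so the displayed quantity is precisely the left-hand side of the dynamic boundary condition in \eqref{difference} and therefore vanishes. What survives is the middle expression of \eqref{multiplier}, which is the first equality. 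For the second equality I would integrate the single remaining term $-D(\partial_x^3z,\phi_x)$ by parts once more; the new boundary term is killed by $z_{xx}(L)=0$ at $x=L$ and by $\phi_x(0)=0$ at $x=0$, leaving $D(z_{xx},\phi_{xx})$, and expanding $\partial_x(z_{tt}+k_0z_t)=\partial_xz_{tt}+k_0\partial_xz_t$ produces the last line.

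The only point that requires genuine care — and the main obstacle — is that for strong solutions $\partial_x^3z$ and $\partial_xz_{tt}$ lie only in $L^2(0,L)$, so the traces $\partial_x^3z(L)$ and $\partial_x(z_{tt}+k_0z_t)(L)$, and hence the third-order condition in \eqref{difference}, are not classical pointwise statements but must be read by duality in the sense of \eqref{vxx}--\eqref{vxxx} (as is already built into the definition of strong solution used here). In the write-up I would therefore avoid isolating any ill-defined trace: I would first carry out the computation for strong solutions whose initial data lie in $\dom(\bbA)$, for which the duality identities make the boundary cancellation rigorous, and then extend \eqref{multiplier} to all strong solutions — and, where it is needed later, to generalized solutions — by density in $\scrH_\al$ and continuous dependence on data, exactly as the paper does for its other multiplier identities. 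Aside from this trace bookkeeping the argument is entirely routine.
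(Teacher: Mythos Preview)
Your proposal is correct and follows exactly the same approach as the paper: integrate by parts using the decomposition \eqref{nondiff} and invoke the boundary conditions in \eqref{difference}. The paper's own proof is a single line to that effect; your write-up simply fills in the details (including the careful handling of the duality traces) that the paper leaves implicit.
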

\begin{proof} We integrate by parts in the relation \eqref{nondiff} and invoke the boundary conditions, as in \eqref{difference}.
\end{proof}
In what follows we will use $\phi=z_t$ and $\phi=z$ as multipliers (again, first with strong solutions, then on generalized solutions via density). This will result in our key observability inequality, and allows us to invoke Theorem \ref{psi}.

\begin{lemma}\label{observe}
Let $T>0$. Generalized solutions on $[0,T]$ to \eqref{difference} satisfy the inequality
\begin{equation}\label{observeinequ}
TE_z(T)+\int_0^TE_z(\tau)d\tau \le C(R)E_z(0)+C(R,T,b_2)l.o.t._{[0,T]},
\end{equation}
where $\displaystyle l.o.t._{[0,T]} := \sup_{[0,T]}\|z\|_{2-\eta}^2$.
\end{lemma}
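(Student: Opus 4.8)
The plan is to derive \eqref{observeinequ} by the standard two-multiplier scheme (energy multiplier $\phi = z_t$ and equipartition multiplier $\phi = z$), where all computations are first performed on strong solutions and then extended to generalized solutions by density in $\scrH_\al$ and the continuous dependence on the data. Using the identity \eqref{multiplier} with $\phi = z_t$, the terms $\alpha(\p_x z_{tt}, z_{tx})$ and $D(z_{xx}, z_{txx})$ combine (after integration in $\tau$) into $E_z(t) - E_z(s)$, the damping term $\alpha k_0(\p_x z_t, \p_x z_t) + k\|z_t\|^2$ contributes a sign-definite dissipation (which we simply discard on the correct side), and the piston term $-\bet U(z_x, z_t)$ together with the two nonlinear terms $-B(w^1)(z_x, z_{tx})$ and $-[B(w^1)-B(w^2)]([w^2]_x, z_{tx})$ must be controlled. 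This yields an energy relation of the form $E_z(t) \le E_z(s) + C\int_s^t(\text{nonlinear and piston terms})\,d\tau$.

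Next I would apply \eqref{multiplier} with $\phi = z$; the leading term gives $\int_s^t \big( D\|z_{xx}\|^2 - B(w^1)\|z_x\|^2 \big)d\tau$ plus the inertial boundary pieces $\alpha(\p_x z_{tt}, z_x) + \alpha k_0(\p_x z_t, z_x)$, which after integrating by parts in $\tau$ produce $[\alpha(z_{tx}, z_x)]_s^t - \alpha\int_s^t \|z_{tx}\|^2 d\tau$ and similar; combined with the $z_t$-multiplier identity and the usual absorption of kinetic energy, this delivers the equipartition bound $\int_s^t E_z(\tau)\,d\tau \le C(R)(E_z(s)+E_z(t)) + (\text{remainder terms})$. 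Combining the two multiplier outputs in the customary way (multiply the energy relation appropriately, add, use $E_z(t) \le C(R) E_z(s) + \dots$ from dissipativity, and reabsorb) produces $T E_z(T) + \int_0^T E_z(\tau)\,d\tau$ on the left against $C(R) E_z(0)$ plus all the ``remainder'' terms on the right.

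The crux is to show every remainder term is a \emph{lower-order} term, i.e.\ bounded by $C(R,T,b_2)\sup_{[0,T]}\|z\|_{2-\eta}^2$ plus, where necessary, a small multiple of the energy that can be reabsorbed. Using the a priori bound $\|w^i(t)\|_2 + \|w^i_t(t)\|_1 \le C(R)$, the scalar coefficient difference obeys $|B(w^1)-B(w^2)| = b_2\big|\|w^1_x\|^2 - \|w^2_x\|^2\big| \le C(R)\|z_x\| \le C(R)\|z\|_{2-\eta}$, so the genuinely troublesome terms are those containing $z_{tx}$, namely $\int_s^t B(w^1)(z_x, z_{tx})\,d\tau$ and $\int_s^t [B(w^1)-B(w^2)]([w^2]_x, z_{tx})\,d\tau$, together with the piston term $\int_s^t \bet U(z_x, z_{tx})\,d\tau$ coming from rewriting $-\bet U(z_x,z_t)$ via the same integration-by-parts on the boundary. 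Here the rotational inertia is essential: $z_{tx}\in L^2$, so one integrates by parts in $\tau$ to move the time derivative off $z_{tx}$, producing boundary terms $[B(w^1)(z_x,z_x)]_s^t$-type expressions (controlled by $\|z\|_{2-\eta}^2$ at the endpoints after using compactness of $H^2 \hookrightarrow H^{2-\eta}$, up to an $\eps E_z$ reabsorbed) and interior terms involving $\frac{d}{d\tau}B(w^1) = -2b_2(w^1_x, w^1_{tx})$ which is bounded by $C(R)$, so those interior contributions are $\le C(R)\int_s^t \|z_x\|^2 d\tau \le C(R,T)\sup_{[0,T]}\|z\|_{2-\eta}^2$. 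Assembling these estimates, choosing $\eps$ small relative to the coefficients produced by the multiplier combination, and absorbing the $\eps$-terms into the left side yields \eqref{observeinequ}.
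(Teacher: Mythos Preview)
Your overall two-multiplier scheme matches the paper's structure, but the handling of the two problematic nonlinear terms is genuinely different. The paper integrates by parts \emph{in $x$}: it rewrites $(w^1_x,w^1_{tx})=-(w^1_{xx},w^1_t)+w^1_x(L)w^1_t(L)$ and $([w^2]_x,z_{tx})=-([w^2]_{xx},z_t)+[w^2]_x(L)z_t(L)$, so the crux becomes controlling the point traces $w^1_t(L)$ and $z_t(L)$, which is where $\al>0$ enters via the trace theorem on $H^1$. Your route---integrating by parts in $\tau$ and bounding $\frac{d}{d\tau}B(w^1)=-2b_2(w^1_x,w^1_{tx})$ directly by $C(R)$---avoids the boundary traces entirely while still using $\al>0$ (through $\|w^i_{tx}\|\le C(R)$). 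For the first nonlinear term this is actually cleaner than the paper's argument.

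Two points need more care, however. First, your treatment only explicitly covers $B(w^1)(z_x,z_{tx})$; for the cross term $\int[B(w^1)-B(w^2)]([w^2]_x,z_{tx})\,d\tau$ the temporal IBP produces the scalar $\frac{d}{d\tau}[B(w^1)-B(w^2)]=-2b_2\big[(z_x,w^1_{tx})+(w^2_x,z_{tx})\big]$, and the piece $(w^2_x,z_{tx})\cdot([w^2]_x,z_x)$ is only controlled by $\eps\|z_{tx}\|^2+C(R,\eps)\|z\|_{2-\eta}^2$. This means the damping term $\al k_0\int\|z_{tx}\|^2$ cannot be ``discarded on the correct side'' as you wrote---it must be retained on the left to absorb these $\eps$-contributions (the paper needs the same absorption, cf.\ its final displayed estimate). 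Second, the piston term $-\bet U(z_x,z_t)$ never involves $z_{tx}$ and needs no integration by parts; it is estimated directly as $\eps\|z_t\|^2+C(\eps)\|z\|_{2-\eta}^2$. With these two clarifications your argument goes through and delivers \eqref{observeinequ}.
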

\begin{proof} The proof is standard (see \cite[Lemma 8.3.1, p.381]{springer} for an abstract version for second order problems, or the discussion of Lemma 3.1 in \cite{hlw}), and follows along the lines of the proof of Proposition \ref{ball} utilizing the energy and equipartition multipliers. Here, we focus only the key issue: the coupling of the nonlinearity and boundary conditions, and addressing the standard ``trick" given in \eqref{nondiff} of tackling nonlinear differences.
First, we take $\phi=z_t$ in \eqref{multiplier} to arrive at:
\begin{align} \nonumber \big(-\alpha\partial_{x}^2(z_{tt}+k_0z_t)+D\partial_x^4z+\mathcal F(z),z_t\big)& \\=~\alpha(\partial_xz_{tt},\partial_xz_t)+\alpha & k_0(\partial_xz_t,\partial_xz_t)+D(z_{xx},z_{txx})-B(w^1)(z_x,\partial_xz_t) \\\nonumber
-\left[B(w^1)-B(w^2)\right]&([w^2]_{x},\partial_xz_t)\end{align} 
The first few terms can be rewritten as total derivatives or damping in the individual inner products, leaving two remaining terms to be addressed:
\begin{align*} (-\alpha \partial_x^2(z_{tt}+k_0z_t)+\partial_x^4z+\mathcal F(z),z_t) =&~\dfrac{1}{2}\dfrac{d}{dt}\Big(\alpha \|\partial_xz_t\|^2+D\|z_{xx}\|^2-B(w^1)\|z_x\|^2\Big)+k_0\alpha\|\partial_xz_t\|^2\\&+\frac{1}{2}\dfrac{d}{dt}\left(B(w^1)\right)\|z_x\|^2-\left[B(w^1)-B(w^2)\right]([w^2]_{x} , z_{xt}).\end{align*}

We rewrite the last two terms, using integrations by parts, with a mind to estimate them:
\begin{align}\label{bad}
\dfrac{1}{2}\dfrac{d}{dt}(B(w^1)) \|z_x\|^2= &-b_2(w^1_x,w^1_{xt})\|z_x\|^2 = \Big(b_2(w^1_{xx},w^1_t)-b_2[w^1_x(L)w^1_t(L)]\Big)\|z_x\|^2\\\label{bad*}
\left[B(w^1)-B(w^2)\right]([w^2]_{x}\big], z_{xt}) = &-\left[B(w^1)-B(w^2)\right] ([w^2]_{xx},z_t)+\left[B(w^1)-B(w^2)\right]\left[[w^2]_x(L)z_t(L)\right]
\end{align}
Then, utilizing the above and reorganizing, we have
\begin{equation}\label{operateon}
  \begin{split}
&\dfrac{1}{2}\Big[D\|z_{xx}(T)\|^2+\alpha\|\partial_xz_t(T)\|^2\Big]+\alpha k_0\int_0^T\|\partial_xz_t\|^2d\tau \\
\le&~ \frac{1}{2}E_z(0) +\Big[|B(w^1(t))|\cdot\|z_x(t)\|^2\Big]\Big|_0^T\\
&+\Big|\int_0^Tb_2(w^1_{xx},w^1_t)\|z_x\|^2d\tau\Big|+\Big|\int_0^Tb_2[w^1_x(L)w^1_t(L)]\|z_x\|^2d\tau \Big|  \\ 
&+\Big|\int_0^T\left[B(w^1)-B(w^2)\right] ([w^2]_{xx},z_t) d\tau \Big| 
+\Big| \int_0^T\left[B(w^1)-B(w^2)\right]\left[[w^2]_x(L)z_t(L)\right]  d\tau \Big|
\end{split}
\end{equation}
The terms at $x=L$ may be estimated using the trace theorem and interpolation directly, yielding:
\begin{align}\label{operateon*}
\dfrac{1}{2}\Big[D\|z_{xx}(T)\|^2+\alpha\|\partial_xz_t(T)\|^2\Big]+\alpha k_0\int_0^T\|\partial_xz_t\|^2d\tau \le&~ \frac{1}{2}E_z(0) +C(R)l.o.t._{[0,T]}\\
&+C(R,T)l.o.t._{[0,T]}\nonumber\\
&+b_2\int_0^T\|w\|_{3/2+\delta}\cdot \|w^1_t\|_{1/2+\delta}\cdot\|z_x\|^2d\tau \nonumber \\ \nonumber
&+\int_0^T\Big|B(w^1)-B(w^2)\Big|\cdot \|w^2\|_2\cdot \|z_t\|_0 d\tau \\
&+\int_0^T\Big|B(w^1)-B(w^2)\Big|\cdot  \|w^2\|_{\frac{3}{2}+\delta}\cdot \|z_t\|_{\half+\delta}  d\tau. \nonumber
\end{align}
We will now invoke the assumption that $\|w^i\|_2^2+\|w^i_t\|_1 \le C(R)$. For the term $B(w^1)-B(w^2)$, we have
\[
  |B(w^2)-B(w^2)| \le b_2\big|\|w^1_x\|^2-\|w^2_x\|^2\big| \le C(b_2)(\|w^1\|_1+\|w^2\|_1)\,\big|\,\|w^1\|_1-\|w^2\|_1\,\big|,
\]
from which we see that \begin{equation}\label{bdiff} |B(w^2)-B(w^2)| \le  C(b_2)\big(\|w^1\|_1+\|w^2\|_1\big)\|z\|_1 \le C(R,b_2)\|z\|_1.\end{equation}
We can thus estimate each line on the RHS of \eqref{operateon*} respectively as follows:

\begin{align}\label{operateon**}
\dfrac{1}{2}\Big[D\|z_{xx}(T)\|^2+\alpha\|\partial_xz_t(T)\|^2\Big]+\alpha k_0\int_0^T\|\partial_xz_t\|^2d\tau \le&~ \frac{1}{2}E_z(0) +C(R)l.o.t._{[0,T]}\\
&+C(R,T)l.o.t._{[0,T]}\nonumber\\
& +C(b_2,R,T)l.o.t._{[0,T]}\nonumber\\
& +C(b_2,R,T)l.o.t._{[0,T]}\nonumber\\
&+\epsilon \int_0^T \|\partial_x z_t\|^2d\tau +C(\epsilon,R,b_2,T)l.o.t._{[0,T]}.\nonumber
\end{align}
Finally, this results in the estimate
$$\dfrac{1}{2}\Big[D\|z_{xx}(T)\|^2+\alpha\|\partial_xz_t(T)\|^2\Big]+\alpha k_0\int_0^T\|\partial_xz_t\|^2d\tau \le \epsilon \int_0^T\|\partial_x z_t\|^2d\tau+C(R,T,b_2,\epsilon)l.o.t._{[0,T]}, ~~\epsilon>0.$$
Taking $\phi=z$ in \eqref{multiplier}, the estimation proceeds more easily, as there is no need to handle time derivatives as in \eqref{bad}--\eqref{bad*}.
 Thus the inequality in Lemma \ref{observe} is obtained from the multipliers $z_t$ and $z$ on \eqref{difference} using the estimations above.
\end{proof}

Now, we may take $\Psi\equiv l.o.t._{[0,T]}$. To apply Theorem \ref{psi}, we simply let $T$ be sufficiently large in \eqref{observeinequ} (relative to $R$) and note that $l.o.t._{[0,T]}$ are compact with respect to the space $\mathscr H_{\alpha}$, and thus the iterated limit property follows easily. Finally, invoking Theorem \ref{psi}, we obtain that $(\mathscr H_{\alpha},\mathcal S(t))$ is asymptotically smooth. This step in turn concludes the proof of Theorem \ref{attractor}.
\begin{remark}
This last step is much simpler than in the case of von K\'arm\'an dynamics for a plate, where the structure of $\Psi$ has more terms dependent upon the structure of the nonlinearity. In that case, one must forgo compactness and exploit the strength of the iterated compensated compactness allowed by Theorem \ref{psi}. See \cite[Section 8.3.2]{springer}.
\end{remark}

\begin{remark}[Lack of trace regularity]\label{alphazerosmooth}
The key issue above is precisely the terms with boundary traces:
\begin{align}
b_2[w^1_x(L)w^1_t(L)]\|z_x\|^2;~~\hskip1cm
\left[B(w^1)-B(w^2)\right]\left[[w^2]_x(L)z_t(L)\right].
\end{align}
These terms are present regardless of the value of $\alpha \ge 0$. It becomes clear, here, that to control these terms one must have control of $z_t(L)$, which is not a priori defined in the case $\alpha =0$ (with $z_t \in L^2(0,L)$ only). Thus, the standard tack of estimating 
$(\cF(z),z_t)$ via the decomposition approach above hinges upon this point. Moreover, we emphasize that---owing to the lack of the local Lipschitz property of $f$ here---writing 
$\ds \left|(\cF,z_t)\right| \le \|\cF(z)\|\|z_t\|$ and imposing large interior damping ($k_0 \gg 0$) when $\alpha=0$ is not sufficient to obtain the estimates. These hurdles present themselves in an analysis depending on the difference of two trajectories, including uniqueness of $\alpha=0$ weak solutions from Theorem \ref{thm:weak}. 

One {\em can} control these terms with boundary damping of the form $g(z_t)$ at $x=L$ imposed in the higher order condition. This is precisely the approach taken by \cite{beam1,beam3,beam4}.
\end{remark}

\section{Numerical Simulations}\label{numerics}
\def\Ucrit{U_{\text{crit}}}
\def\hUcrit{\hat{U}_{\text{crit}}}
\def\emax{\mathcal{E}_{\text{max}}}
In this section we consider dynamics of the form
\begin{equation}\label{k1}(1-\alpha\partial_x^2)w_{tt}+D\partial_x^4 w+ {k_0w_t-k_1\partial_x^2w_t}+(b_1-b_2\|w_x\|^2)w_{xx} = p_0(x)-\beta(w_t+Uw_x),\end{equation} with the associated nonlinear cantilevered boundary conditions (analogous to those in \eqref{Bergerplate}) with specific parameter choices. 
Note that we have decoupled the ``viscous" damping from the ``strong" damping treating $k_0$ and $k_1$ as fully independent parameters (unlike the theoretical analysis above where we took $k_1 =\alpha k_0$).  The piston-theoretic right hand side is determined by three quantities: $\beta$, representing the scaling of the fluid downwash (that is, due to flow effects), $p_0$, representing a static fluid pressure, and $U$, the unperturbed flow velocity. For convenience we fix $\beta$ at unity, and we are not primarily interested in the effect of the function $p_0$ on the long-time behavior of trajectories.
Thus, for the numerical study below we take
\[
  \bet=1,~ \txtand  p_0 \equiv 0.
\] 
We also note that a non-zero $b_1$---a pre-stressing parameter---is typically associated with configurations that restrict both beam ends. We included nonzero $b_1$ for generality in our theoretical work, but we do not focus on it in our simulations below, and thus we take
\[b_1=0.\]

To demonstrate several qualitative aspects of dynamics considered here,  we conduct numerical simulations on the piston-theoretic, cantilevered extensible beam model driven by \eqref{k1}.  Spatial discretization on the first-order evolution system is accomplished via a finite element method, and temporal integration is performed using the Runge-Kutta method of fourth order.  All simulations were performed with {\em mathematical} parameter choices\footnote{One can find appropriate physical scalings, for instance, in \cite{vedeneev}.}: $$D=1,~~~L=1,$$ and a spatial mesh size of $\Delta x=\ell/20$. {\em Unless stated otherwise, initial data for all simulations included an equilibrium initial displacement and linear initial velocity:  
\[
w(x,0)=0,\qquad w_t(x,0)=0.01x.
\]}
In general, meaningful observations of the beam dynamics could be made in the range $0\le t\le 20$, so the running time is $T=20$ for most simulations below.

In what follows, the central focus on stability revolves around the piston-theoretic ``perturbation" term ~$-U w_x$ in the equation, and its effect on stability properties of the dynamics. For various parameter combinations, we will determine a so called $U_{\text{crit}}$ which corresponds to the onset of flow-induced instability. To reiterate, when we are considering a $U<U_{\text{crit}}$ for a particular configuration, we expect and demonstrate that the dynamics (linear or nonlinear) converge---with exponential rate---to the equilibrium. In the supercritical case $U>U_{\text{crit}}$, we expect linear dynamics ($b_2=0$) to exhibit exponential growth of energies, and we expect nonlinear dynamics ($b_2>0$) to exhibit fluttering behavior---typically characterized by limit cycle oscillations (LCOs) resembling the second in vacuo cantilever beam eigenmode.
We now describe several different studies on the simulated dynamics.

\subsection{Basic qualitative properties of flutter}

To ground our discussions, we begin by showing a collection of snapshots of the in vacuo \emph{linear} ($b_2=0$) beam dynamics corresponding to the first and second Euler-Bernoulli cantilever mode shapes. The simulations below take us through one period of the beam dynamics (\eqref{k1} taken with $\alpha=k_0=k_1=b_1=b_2=p_0=\beta =0$ and $D=1$), with initial displacements taken as the first and second cantilever modes. 
\begin{figure}[htp]
\begin{center}
\includegraphics[width=3in]{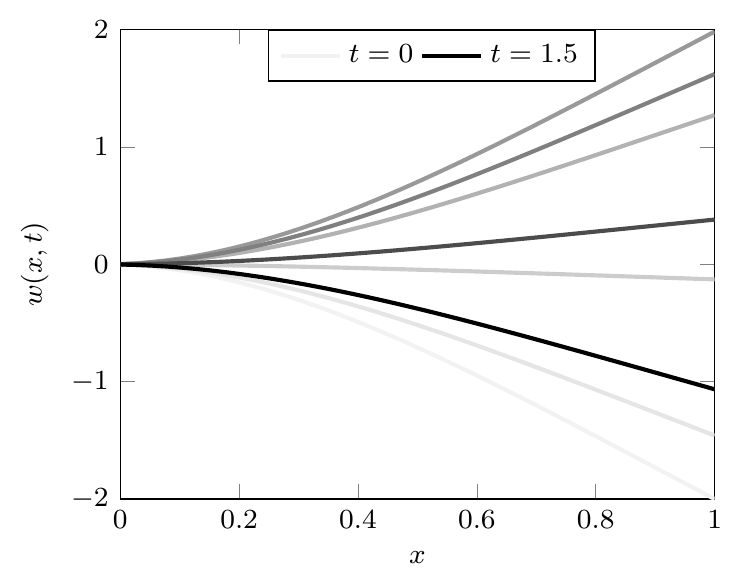}
\hspace*{0.25in}
\includegraphics[width=3in]{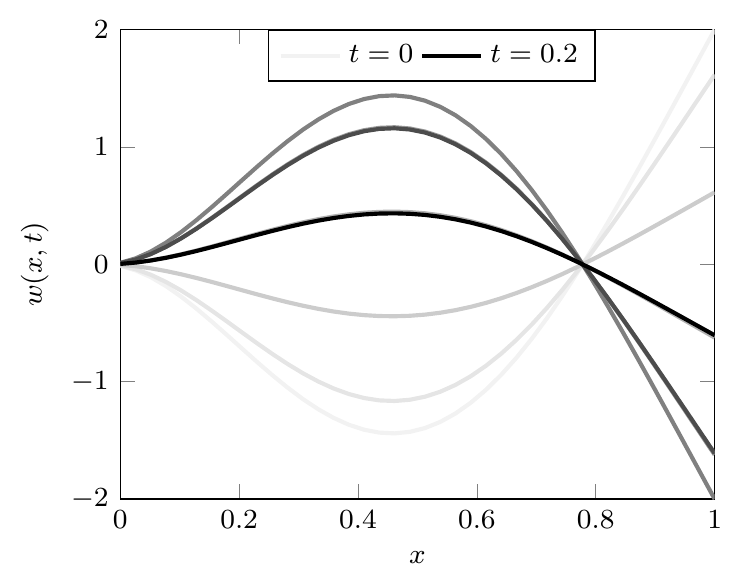}
\caption{Plot of in vacuo $w(x,t)$ at varying $t$; 1st mode as initial displacement (left), 2nd mode as initial displacement (right).}\label{fig21}
\end{center}
\end{figure}

We now provide a similar snapshot of the profile of a fluttering cantilever beam ($b_2=1$), taken with a polynomial initial displacement (see \textbf{Polynomial ID} below). We show approximately one period of the non-transient dynamics of a beam after it has approached a limit cycle. Note the similarity of the flutter profile and the second in vacuo mode profile. 
\begin{figure}[htp]
\begin{center}
\includegraphics[width=3in]{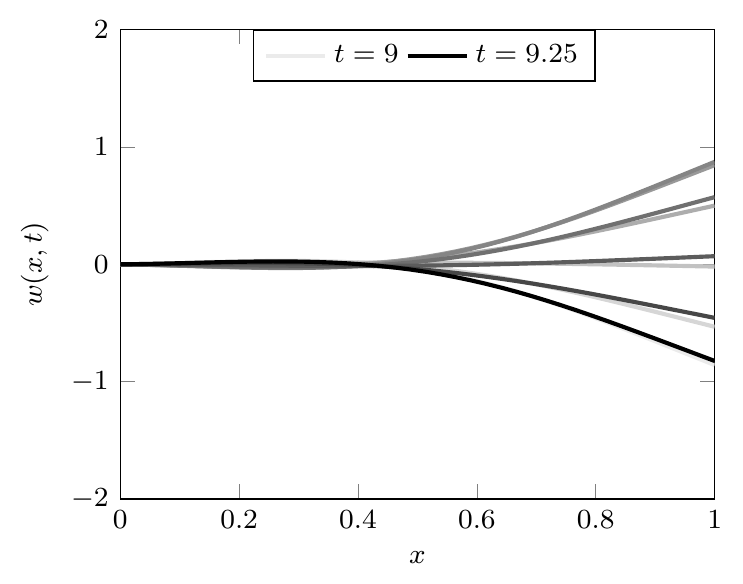}\hspace*{0.25in}
\includegraphics[width=3in]{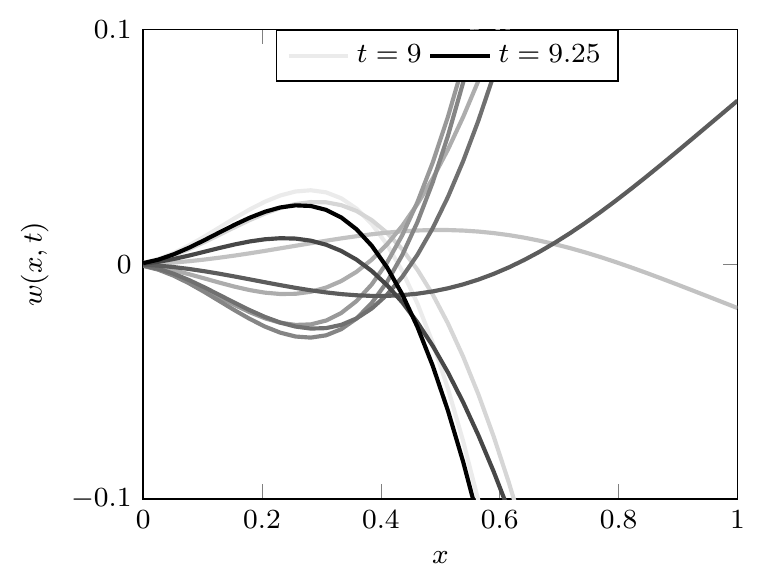}
\caption{Plot of $w(x,t)$ for $\alpha=k_1=k_0=0$, $b_2=1$, $U=150$ at varying $t$; {\bf Polynomial ID}. The figure on the right is a magnification in the transverse dimension of the left.}\label{fig29}
\end{center}
\end{figure}

Next, we consider the nonlinear case $(b_2=1)$ and provide tip profile---the displacement at $x=L=1$---for three different parameter sets, each for three different initial configurations:
\begin{itemize}
  \item \textbf{[2nd Mode ID]}
$w(0,x)=s_2(x) =  [\cos(\kappa_2x)-\cosh(\kappa_2x)]-\mathcal C_2[\sin(\kappa_2x)-\sinh(\kappa_2x)], ~w_t(0,x)=0$,  \vskip.05cm
where $\kappa_2\approx 4.6941$ is the second Euler-Bernoulli cantilevered mode number (with $L=1$) and
 
 $\mathcal C_2 = \left[\dfrac{\cos(\kappa_2)+\cosh(\kappa_2)}{\sin(\kappa_2)+\sinh(\kappa_2)}\right]\approx 1.0185$; 
 \item \textbf{[Polynomial ID]}\quad
$w(0,x)=-4x^5+15x^4-20x^3+10x^2$, ~$w_t(0,x)=0$; 
\item \textbf{[Linear IV]}\quad $w(0,x)=0$, $~w_t(0,x)=x$. 
\end{itemize} 

First, Figure \ref{fig26} represents the nonlinear ($b_2=1$), in vacuo ($\beta=k_0=k_1=0$), $\alpha=0$ tip displacements with the three initial configurations described above.
\begin{figure}[htp]
\begin{center}
\includegraphics[width=5in]{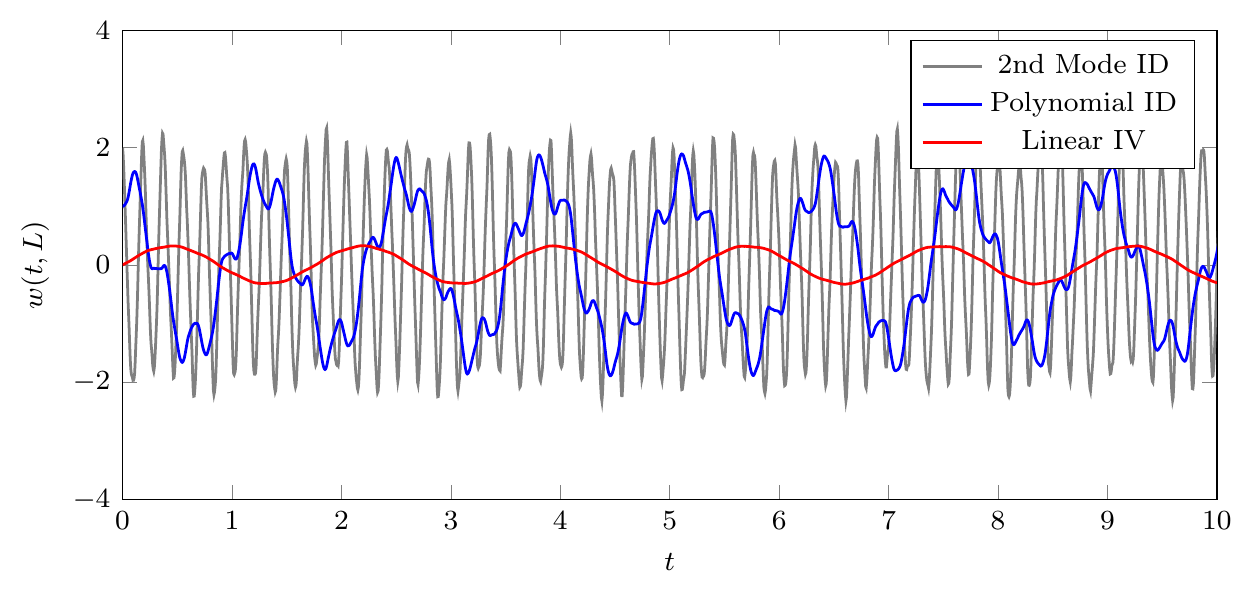}\\
\caption{Plot of in vacuo tip displacement $w(t,L)$; varying initial configuration.}\label{fig26}
\end{center}
\end{figure}
Figures \ref{fig23} and \ref{fig25} represent fluttering (nonlinear) dynamics in the case of $\al=0$, as well as in a rotational configuration ($\al=k_0=0.1$). 
Figure \ref{fig26} allows for comparisons of amplitudes and periods between the aforementioned nonlinear, in vacuo dynamics and the corresponding fluttering dynamics.
In the flutter cases, we see convergence to the ``same" LCO for each of the initial conditions considered. We also note the effects (on amplitude and period of the LCO) of the inherent parameters (e.g., $k_1$ and $\alpha$ here). Finally,  note that, although we seem to see convergence to the same LCO, the {\em transient regime} and ``time to convergence" are certainly affected by the choice of initial configuration. 
\begin{figure}[htp]
\begin{center}
\includegraphics[width=5in]{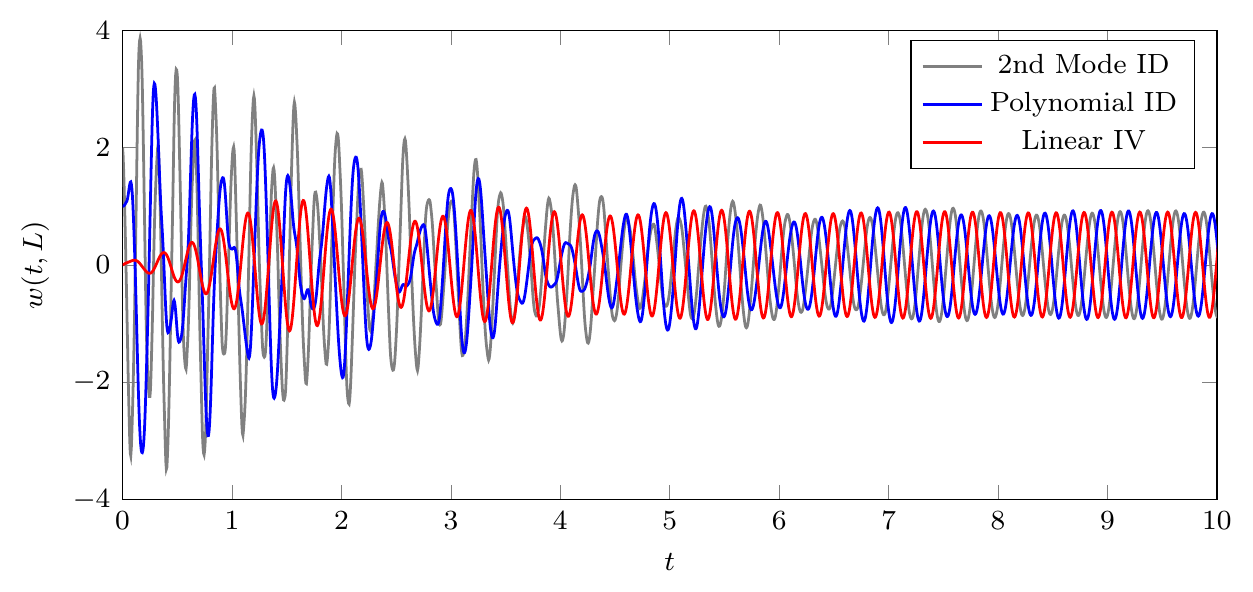}\\
\caption{Plot of $w(t,L)$ for $\alpha=k_1=k_0=0$, $b_2=1$, $U=150$; varying initial configuration.}\label{fig23}
\end{center}
\end{figure}
\begin{figure}[htp]
\begin{center}
\includegraphics[width=5in]{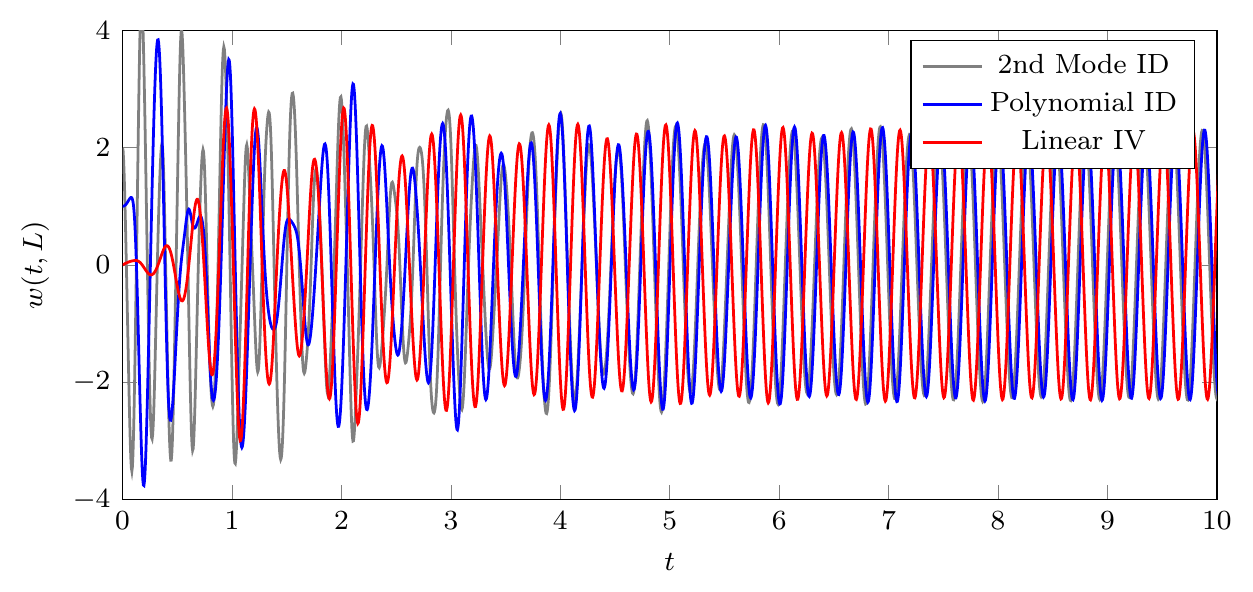}\\
\caption{Plot of $w(t,L)$ for $\alpha=k_1=.01$, $k_0=0$, $b_2=1$, $U=150$; varying initial configuration.}\label{fig25}
\end{center}
\end{figure}

\subsection{Influence of rotational inertia on critical flow velocities}
The next step in our study focuses on the influence of rotational inertia on the stability of the flow-perturbed dynamics.  In particular, for a linear model, we think of the flow term $Uw_x$ as a perturbation that can affect the location of the eigenvalues for the in vacuo beam. Without nonlinear effects included, an {\em unstable} configuration will exhibit exponential grow of energies in time, in accordance with the emergence of an unstable (perturbed) eigenvalue.\footnote{Another popular approach to assess the stability of the linear model is the so called ``modal" analysis, popular in the engineering literature (e.g., \cite{vedeneev}). This is a spectral approach that makes use of a Galerkin procedure with a basis given by the structure's in vacuo eigenfunctions. Treating lower order and damping terms as perturbations, and assuming simple harmonic motion in a dominant frequency, one reduces the linear stability problem to an eigenvalue computation. Our results here are easily checked via such an approach, though we refer to \cite{comppaper} for more details and a recent analysis of this type.}

Define the ``energy max'' $\emax(t_a, t_b)$ over the interval $[t_a, t_b]$ as
\[\emax(t_a, t_b) = \max_{t_a\le t\le t_b} \mathcal{E}(t).\]  
To empirically determine if a trajectory is stable or unstable here, we examine the ratio ~$r=\emax(20,40)/\emax(0,20)$ for a large class of simulations.  If $r>1$, then the trajectory will be considered unstable, and is stable otherwise.  In Figure \ref{fig0}, the rotational inertia parameter $\alpha$ is plotted against the critical flow velocity $U$ (here viewed as a function of  $\alpha \ge 0$), with the blue curve giving the approximate location of $\Ucrit(\alpha)$ for each value of $\alpha$. The damping coefficients $k_0$ and $k_1$ are taken to be $0$, and  nonlinear effects are disabled ($b_2=0$). We initialize with $\alpha=0$, with critical flow velocity $U_{\text{crit}}(0)=135.97$.  We then note that for $10^{-3}\le \alpha \le 10^{-1}$, the critical flow velocity drops precipitously. We also note that, as $\al \searrow 0$, the $U_{\text{crit}}(\alpha)$ appears to converge to $U_{\text{crit}}(0)=135.97$.
\begin{figure}[htp]
\begin{center}
\includegraphics[width=5in]{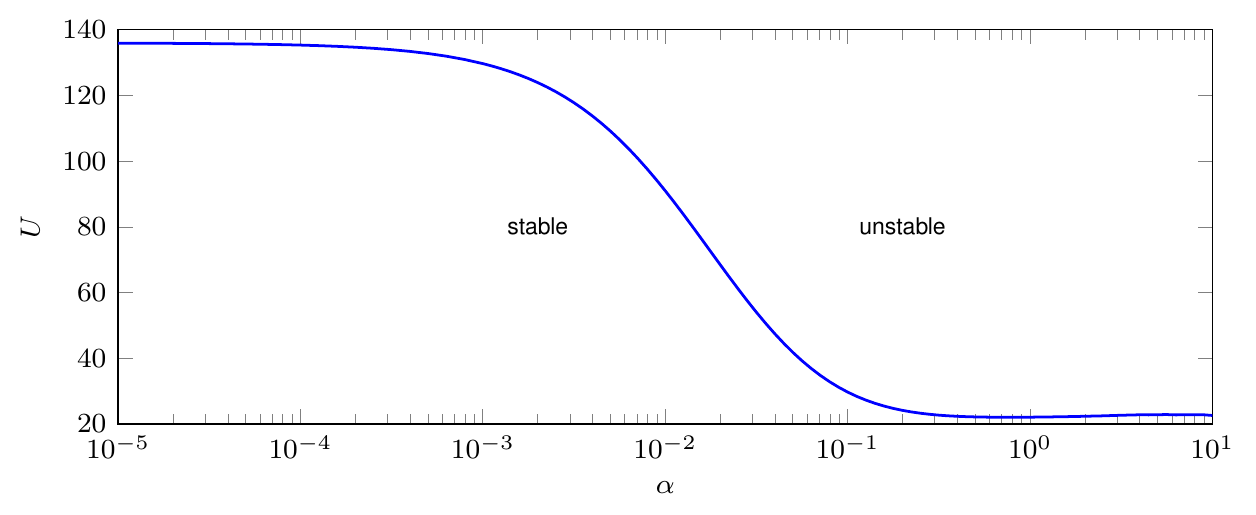}\\
\caption{Plot of $\mathcal{E}(t)$ for $k_1=k_0=0$, $b_2=0$, and varying $U$; $U_{\text{crit}}(\alpha=0)=135.97$.}\label{fig0}
\end{center}
\end{figure}
\begin{remark}
We include here, for reference, the comparative dispersion relations for the Euler-Bernoulli beam ($\alpha=0$) and the Rayleigh beam ($\alpha>0$) with $\alpha$ varying and $D=1$. Assuming a traveling wave solution of the form $w=Ce^{i(kx-\omega t)},~~C \in \mathbb C$, we have:
\begin{equation}
\omega^2 =\dfrac{k^4}{1+\alpha k^2} \txtfor  \alpha\ge 0.
\end{equation}
The above shows the relationship between wave number and eigenfrequency; in particular, if one invokes the boundary conditions of a particular configuration, one arrives at a relationship between the eigenvalues $\pm i\omega_n$ and $k_n$ (with $n \in \mathbb N$, $k_n \to +\infty$). The dispersion relationship demonstrates the {\em destabilizing} effect of $\alpha>0$, which reduces the magnitude of the in vacuo eigenvalues.
\end{remark}

\subsection{Influence of nonlinearity on boundedness of trajectories}
\subsubsection{$\alpha=0$: No rotational inertia}

First, computed energies $\mathcal{E}(t)$ (as given in \eqref{def:energy}) for the model \eqref{k1} with {$\alpha=0$}, no imposed damping ($k_0=k_1=0$), and with no nonlinear effects ($b_2=0$) are shown in Figure \ref{fig1}.  For this choice of parameters, an empirically determined approximation to the critical flow velocity is $\Ucrit(0)=135.97$.  Energy profiles (log-scale) for various choices of $U$ in terms of $\Ucrit$ are given---here, a linear profile in the energy plot represents exponential growth or decay of the dynamics in the energy norm, with margin of instability/stability depending on the slope of the profile.  Note that  the piston-theoretic damping induces exponential decay of energies $\cE(t)$  whenever $U<\Ucrit$, as evinced by the linear profiles (or envelopes) with a negative slope. On the other hand, when $U> \Ucrit$ the energy grows exponentially.
\begin{figure}[htp]
\begin{center}
\includegraphics[width=5in]{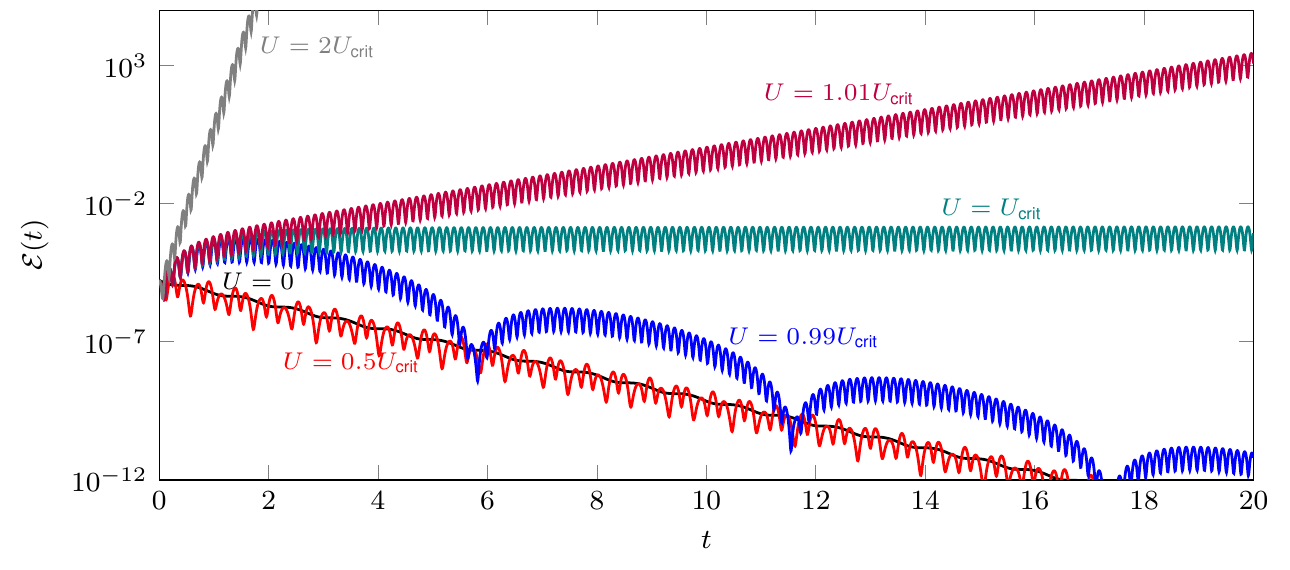}\\
\caption{Plot of $\mathcal{E}(t)$ for $\alpha=k_1=k_0=0$, $b_2=0$, and varying $U$; $U_{\text{crit}}=135.97$.}\label{fig1}
\end{center}
\end{figure}

In Figure \ref{fig2}, the nonlinear dynamics ($b_2=1$) are included into the model and the nonlinear energy $\mathcal{E}(t)$  is shown (for the same choices of $U$ as in Figure \ref{fig1}).  Note that $\mathcal{E}(t)$ (and hence $\cl{E}(t)$, due to \eqref{prop:compare}) {\em now remains bounded for all supercritical (unstable) velocities $U>\Ucrit$}, demonstrating the Lyapunov stability induced by the $-\|w_x\|^2  w_{xx}$ term. From the point of view of trajectories, each remains bounded for all time, with global-in-time bound dependent upon $U$ (and other intrinsic parameters). {\em This confirms the formal conjecture in Remark \ref{alf}.} 

For $U>\Ucrit$ in the nonlinear case ($b_2>0$), we refer to the non-transient behavior of the dynamics as {\em flutter}, and, expect convergence to a limit cycle\footnote{We note that other qualitative properties---including chaotic behavior---of the dynamics are possible when $b_1>>0$ or $p_0(x) \not\equiv 0$, those these cases are not investigated in this treatment.}. For $U<\Ucrit$, we note that the nonlinearity does not dramatically perturb the corresponding exponentially decay of the trajectory observed in the linear case.
\begin{figure}[htp]
\begin{center}
\includegraphics[width=5in]{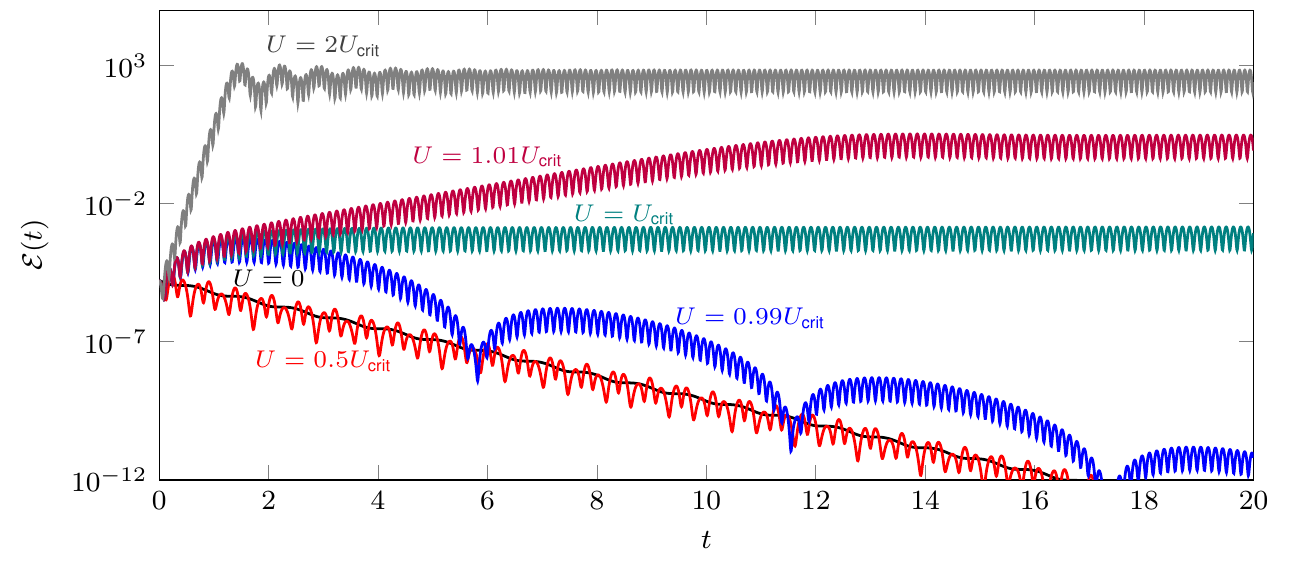}\\
\vspace*{-0.2in}
\caption{Plot of $\mathcal{E}(t)$ for $\alpha=k_1=k_0=0$, and $b_2=1$, varying $U$; $\Ucrit=135.97$.}
\label{fig2}
\end{center}
\end{figure}

\subsubsection{$\alpha>0$: Rotational inertia}
Rotational inertia is incorporated into \eqref{k1} when $\alpha$ is taken to be positive.  As discussed above, the presence of rotational inertia decreases the critical flow speed $\Ucrit (\alpha)$.   For $\alpha=10^{-3}$, the empirically determined critical flow velocity is $\Ucrit(.001)=129.68$.  Analogous to the prior section, computed energies are given in Figure \ref{fig16} for the linear model without nonlinear effects, and $b_2=1$ in Figure \ref{fig17}.  Note that the vertical axis scaling in Figures \ref{fig16} and \ref{fig17} are the same as in Figures \ref{fig1} and \ref{fig2}.  
\begin{figure}[htp]
\begin{center}
\includegraphics[width=5in]{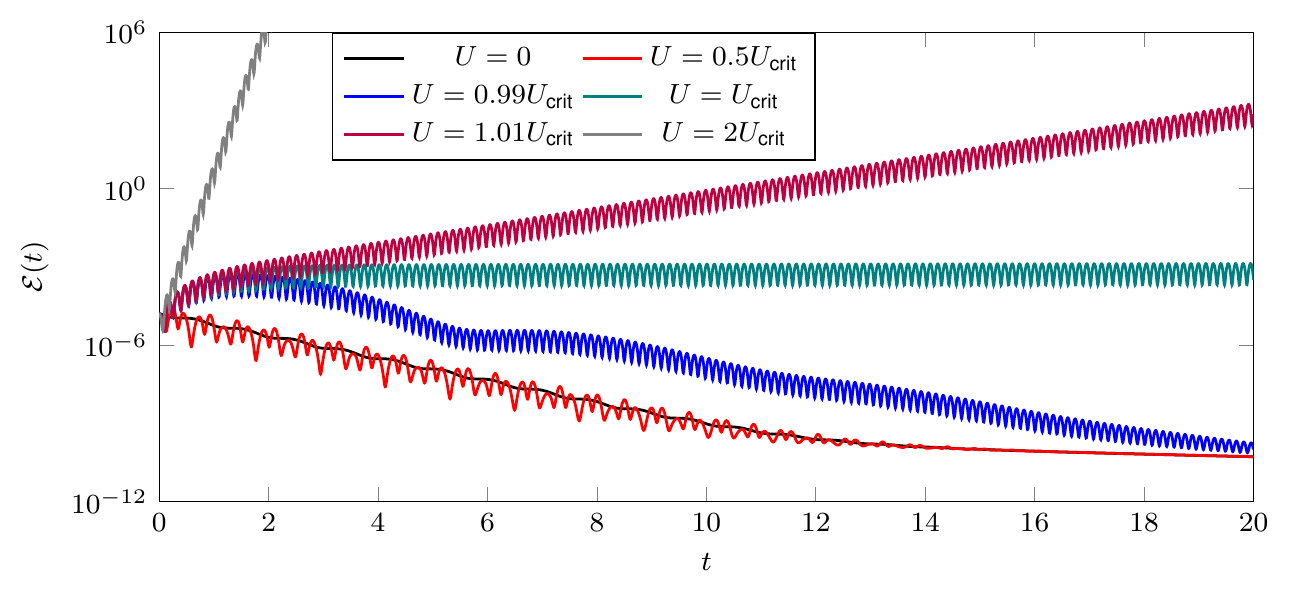}\\
\caption{Plot of $\mathcal{E}(t)$ for $\alpha=10^{-3}$, $k_1=k_0=0$, $b_2=0$, and varying $U$; $U_{\text{crit}}=129.68$.}\label{fig16}
\end{center}
\end{figure}
\begin{figure}[htp]
\begin{center}
\includegraphics[width=5in]{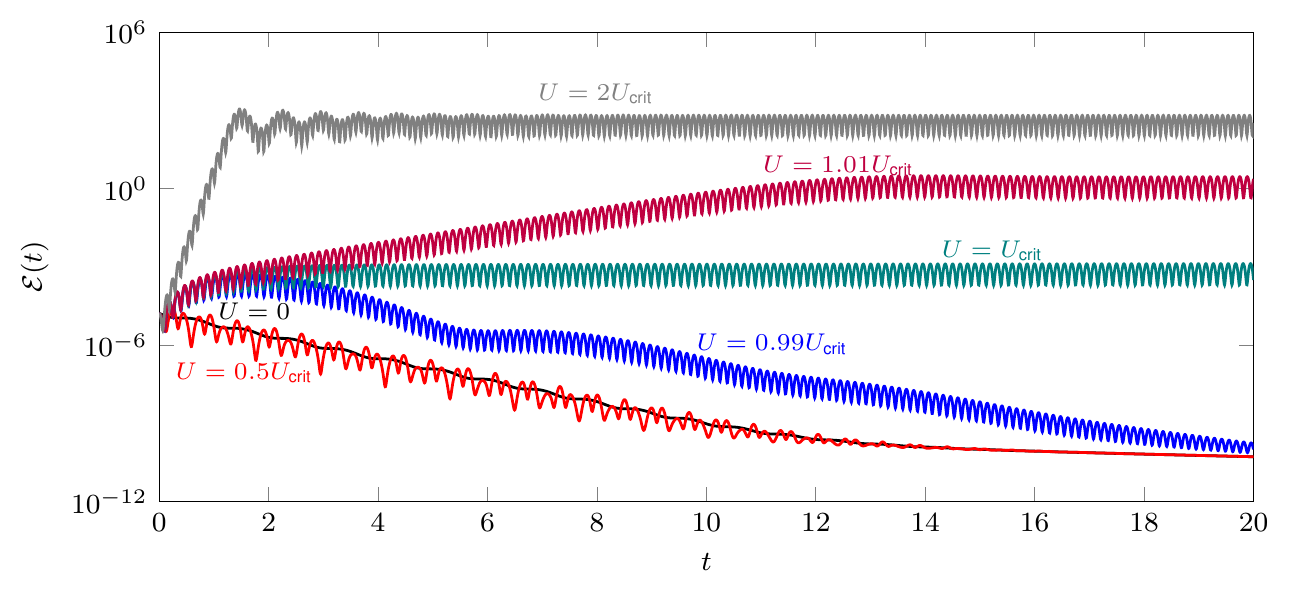}\\
\vspace*{-0.2in}
\caption{Plot of $\mathcal{E}(t)$ for $\alpha=10^{-3}$, $k_1=k_0=0$, and $b_2=1$, varying $U$; $\Ucrit=129.68$.}
\label{fig17}
\end{center}
\end{figure}

Figures \ref{fig13} and \ref{fig14} give trajectories for $\alpha=1$, with $b_2=0$ in Figure \ref{fig13} and $b_2=1$ in Figure \ref{fig14}.  For $\alpha=1$, the empirically determined critical flow velocity is $\Ucrit=22.09$.  When rotational inertia is present at this magnitude, there is less distinction between energy profiles near $\Ucrit$. 
\begin{figure}[htp]
\begin{center}
\includegraphics[width=5in]{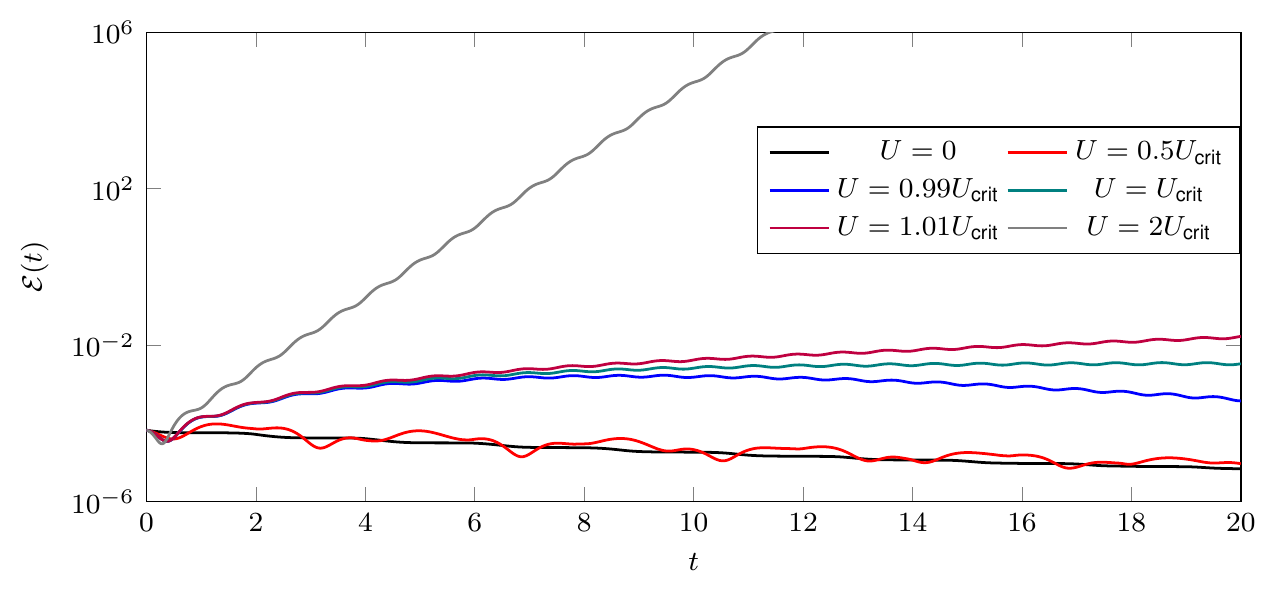}\\
\caption{Plot of $\mathcal{E}(t)$ for $\alpha=1$, $k_1=k_0=0$, $b_2=0$, and varying $U$; $U_{\text{crit}}=22.09$.}\label{fig13}
\end{center}
\end{figure}
\begin{figure}[htp]
\begin{center}
\includegraphics[width=5in]{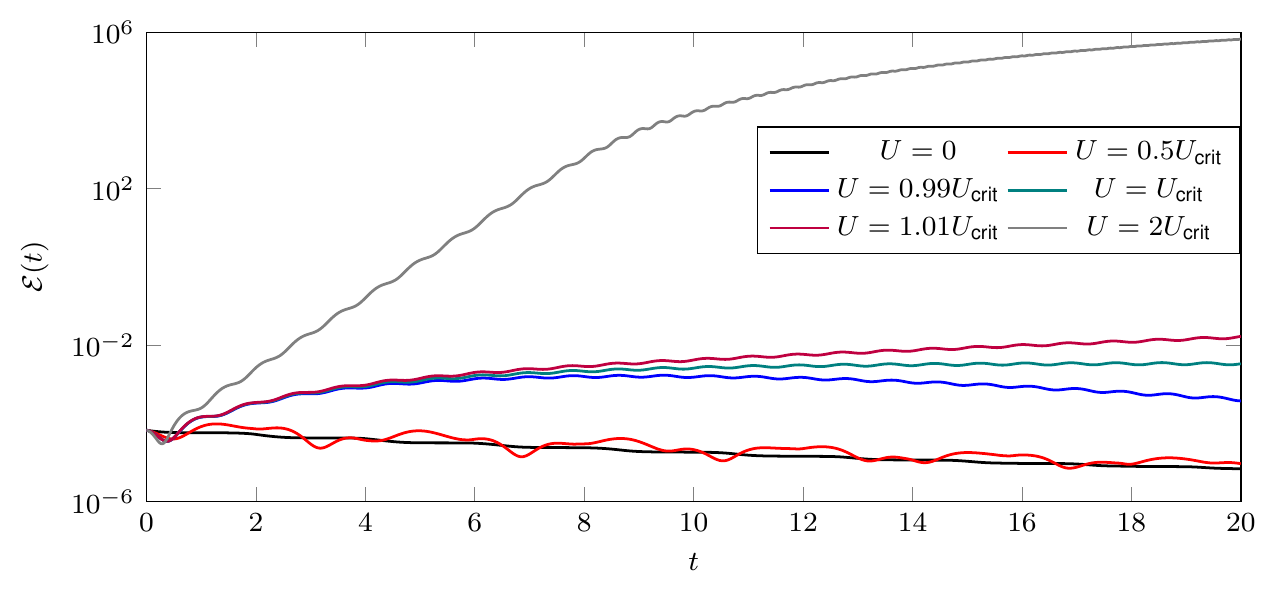}\\
\vspace*{-0.2in}
\caption{Plot of $\mathcal{E}(t)$ for $\alpha=1$, $k_1=k_0=0$, and $b_2=1$, varying $U$; $\Ucrit=22.09$.}
\label{fig14}
\end{center}
\end{figure}
As before, {\em the presence of $b_2>0$ guarantees boundedness of trajectories for all times.}

\subsection{Influence of the nonlinear parameter $b_2$}

Subsequently, the effect of increasing the nonlinear parameter $b_2$ was studied on a supercritical flow velocity ($U >\Ucrit$) for beam with and without rotational inertia.  In Figure \ref{fig11}, energy profiles for several different choices of $b_2$ are given for the parameters $\alpha=k_1=k_0=0$,  and $U=150$ ($\Ucrit=135.97$).  Note that  as $b_2$ increases, the energy plateau decreases (for an initial configuration fixed across all simulations).
\begin{figure}[htp]
\begin{center}
\includegraphics[width=5in]{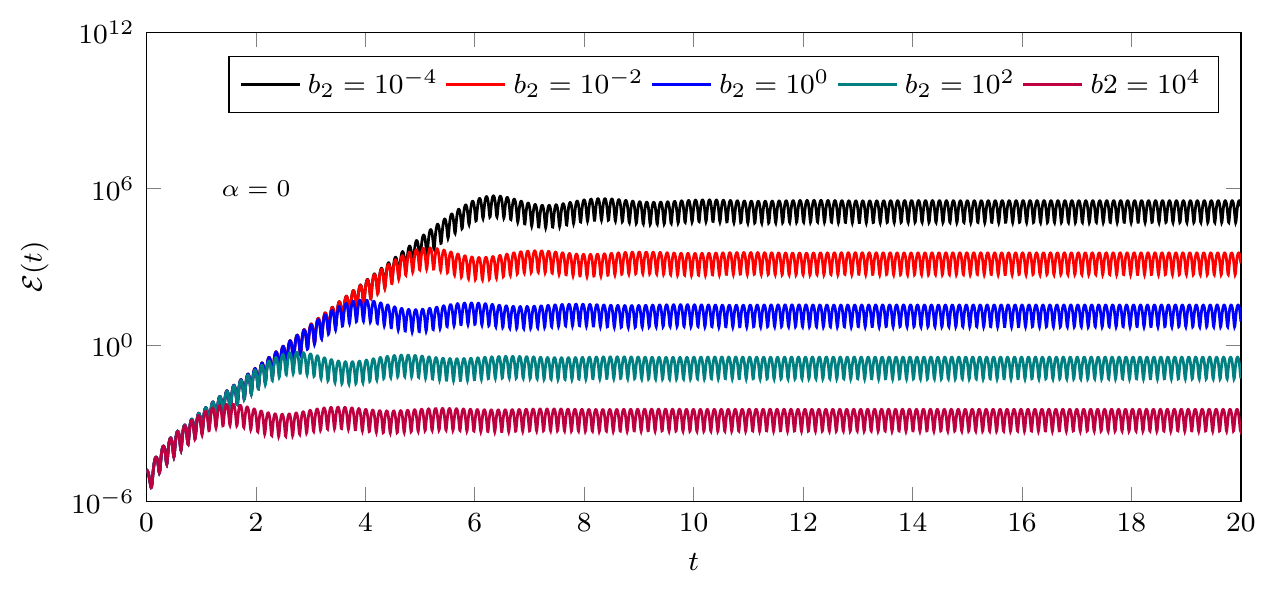}\\
\vspace*{-0.2in}
\caption{Plot of $\mathcal{E}(t)$ for $\alpha=k_1=k_0=0$, and $U=150$, varying $b_2$.}
\label{fig11}
\end{center}
\end{figure}

In Figure \ref{fig12}, energy profiles for several different choices of $b_2$ are computed with $\alpha=10^{-3}$ and $U=150$ (the critical flow velocity for this $\alpha$ is $129.68$).  The vertical axis is scaled as in Figure \ref{fig11}.  Note that, in contrast to Figure \ref{fig11}, energies for beams with $\alpha>0$ initially grow at a faster rate (as expected) than when rotational inertia is absent; since the energy plateaus are comparable for both values of $\alpha$, we observe the effect of $\alpha$ ``speeding up" the bounding effect induced by the presence of nonlinearity. 

\begin{figure}[htp]
\begin{center}
\includegraphics[width=5in]{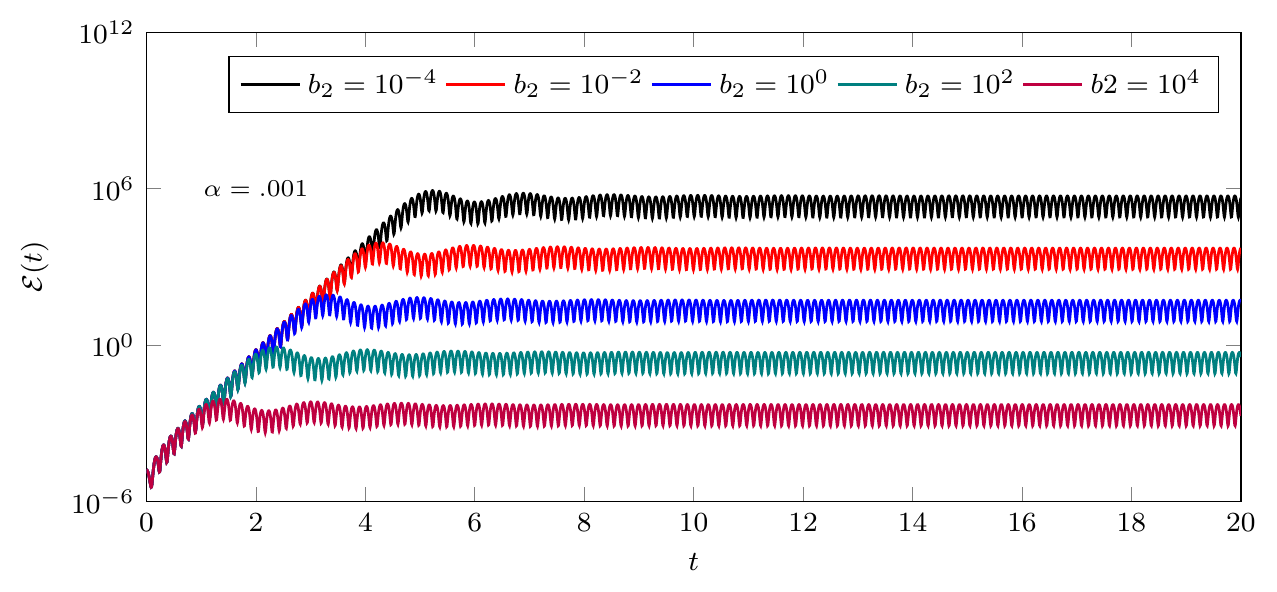}\\
\vspace*{-0.2in}
\caption{Plot of $\mathcal{E}(t)$ for $\alpha=10^{-3}$, $k_1=k_0=0$, and $U=150$, varying $b_2$.}
\label{fig12}
\end{center}
\end{figure}

Finally, \ref{fig18} presents energy profiles for the case when $\alpha=1$ and $U=50$, which is greater than the critical velocity of $22.09$ for this $\alpha$.  Note that the energy profiles are much smoother, but the energies are greater, even with a substantially smaller $U$ value.

\begin{figure}[htp]
\begin{center}
\includegraphics[width=5in]{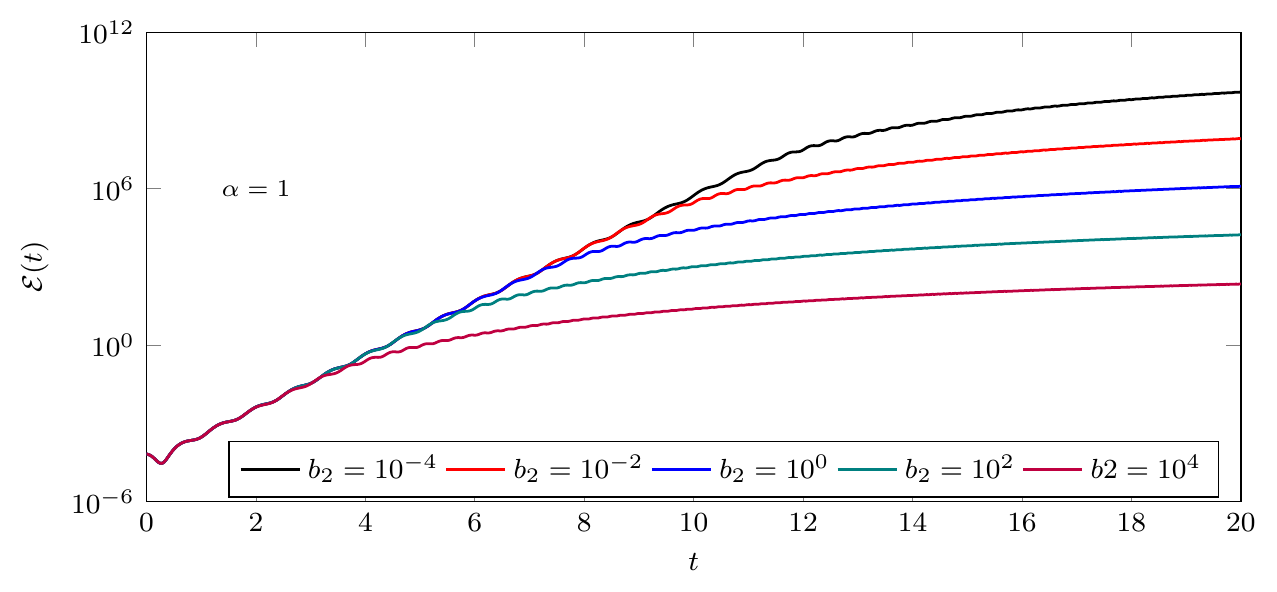}\\
\vspace*{-0.2in}
\caption{Plot of $\mathcal{E}(t)$ for $\alpha=1$, $k_1=k_0=0$, and $U=50$, varying $b_2$.}
\label{fig18}
\end{center}
\end{figure}

\subsection{Stability when increasing $\alpha=k_1$}
The collective influence of the terms in \eqref{k1} that pertain to rotational inertia is  studied by fixing all other parameters and allowing $\alpha=k_1$ to vary.  In Figure \ref{fig7}, energy plots for $\alpha=k_1$ ranging from $10^{-4}$ to $10^{4}$ with $b_1=b_2=0$, $k_0=0$, and $U=150$ (supercritical) are given.  For values of $\alpha=k_1$ from $10^{-4}$ to $10^{-1}$, the slope of the energy profile increases, indicating that the rotational inertia terms decrease the linear stability of the beam.  However, the slope of the energy profiles decreases around $10^{-1}$, and eventually regains stability, although there is a slight increase in the maximum computed energy from $10^{2}$ to $10^{4}$. \emph{These curves represent the trade-off between the destabilizing effect of increasing $\alpha$ on the beam's eigenvalues, and the stabilizing effect of strong damping, due to increasing $k_1$ when $\alpha>0$}.
\begin{figure}[htp]
\begin{center}
\includegraphics[width=5in]{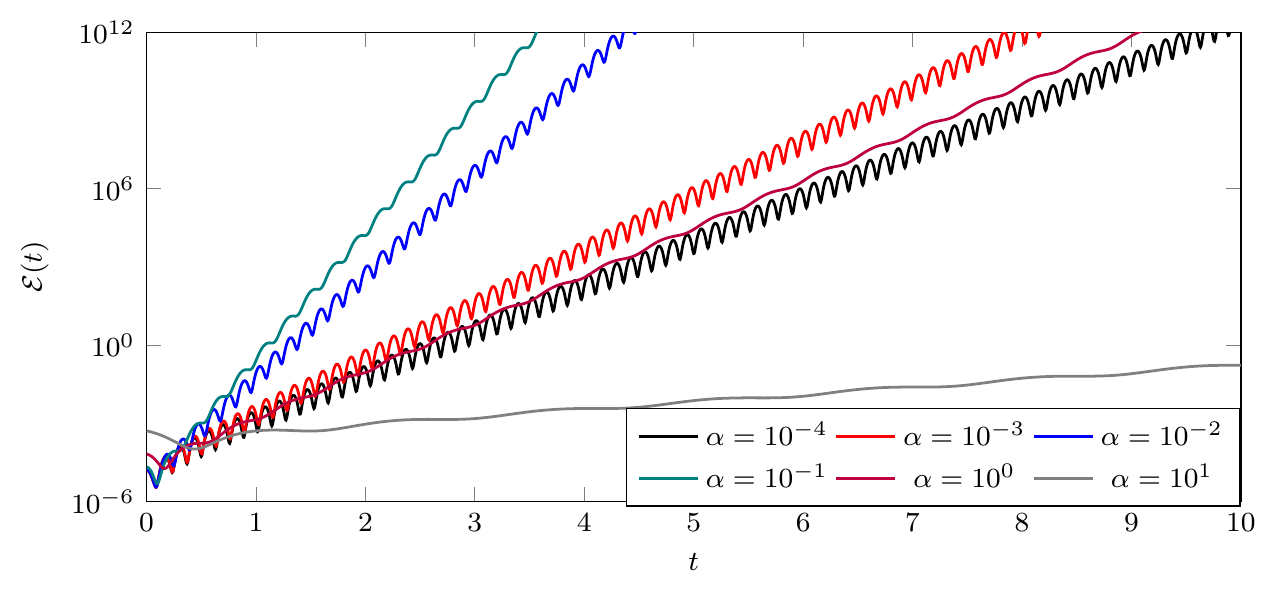}\\
\vspace*{-0.2in}
\caption{Plot of $\mathcal{E}(t)$ for varying $\alpha=k_1$, $b_2=0$,  $U=150$, $k_0=0$.}
\label{fig7}
\end{center}
\end{figure}

To better understand the overall behavior seen in Figure \ref{fig7}, we examine how $\emax$ behaves when $\alpha=k_1$ is varied.  In Figure \ref{fig8}, the energy max at $T=20$ is computed along a fine grid of $\alpha=k_1$ values ranging from $10^{-4}$ to $10^{4}$ (with all other parameters the same as in Figure \ref{fig7}).  It is observed that $\emax(20)$ peaks at around $\alpha=k_1=10^{-1}$ and subsequently decreases until approximately $10^{1.5}$.  At $\alpha=k_1\approx 10^{1.5}$ the curve exhibits a mild cusp and then increases slowly to $\alpha=k_1=10^{4}$.  
\begin{figure}[htp]
\begin{center}
\includegraphics[width=5in]{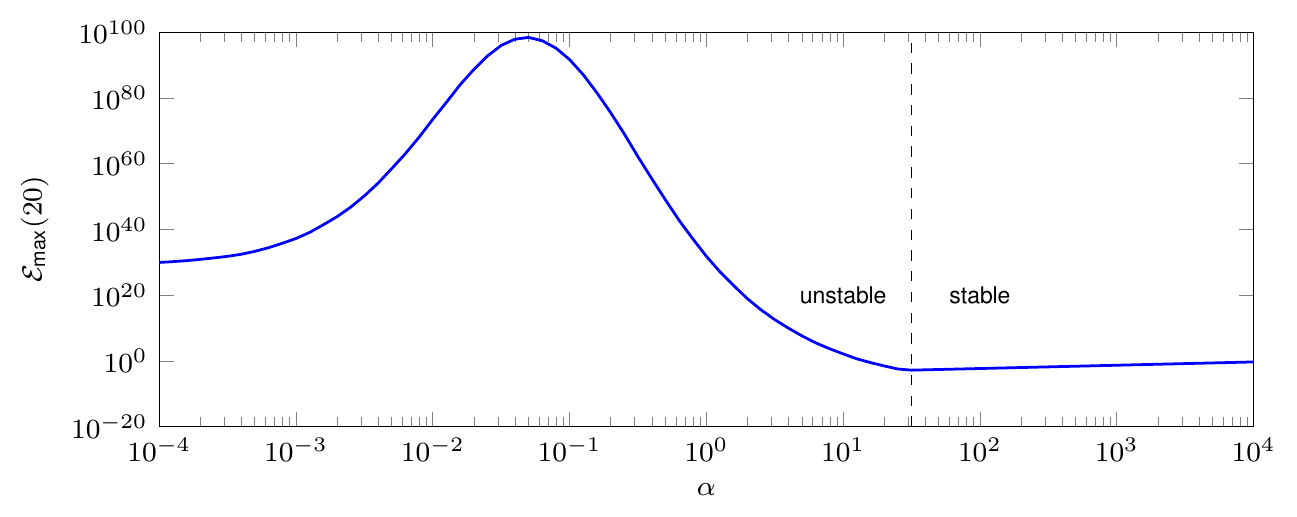}\\
\vspace*{-0.2in}
\caption{Plot of $\emax(0,20)$ for varying $\alpha=k_1$, $b_1=b_2=0$,  $U=150$, $k_0=0$.}
\label{fig8}
\end{center}
\end{figure}
A plot of the beam tip displacement $w(L,t)$ is given for $\alpha=k_1=1.4, 1.5, 1.6$ in Figure \ref{fig9}.  All three exhibit comparable oscillations for (approximately) $0<t<3$, but the collective effect of increasing $\alpha=k_1$ is to suppress amplitude growth and extend the period of oscillation. It is clear that, in the smallest case of $\alpha=k_1=1.4$, the displacement amplitude is increasing.
\begin{figure}[htp]
\begin{center}
\includegraphics[width=5in]{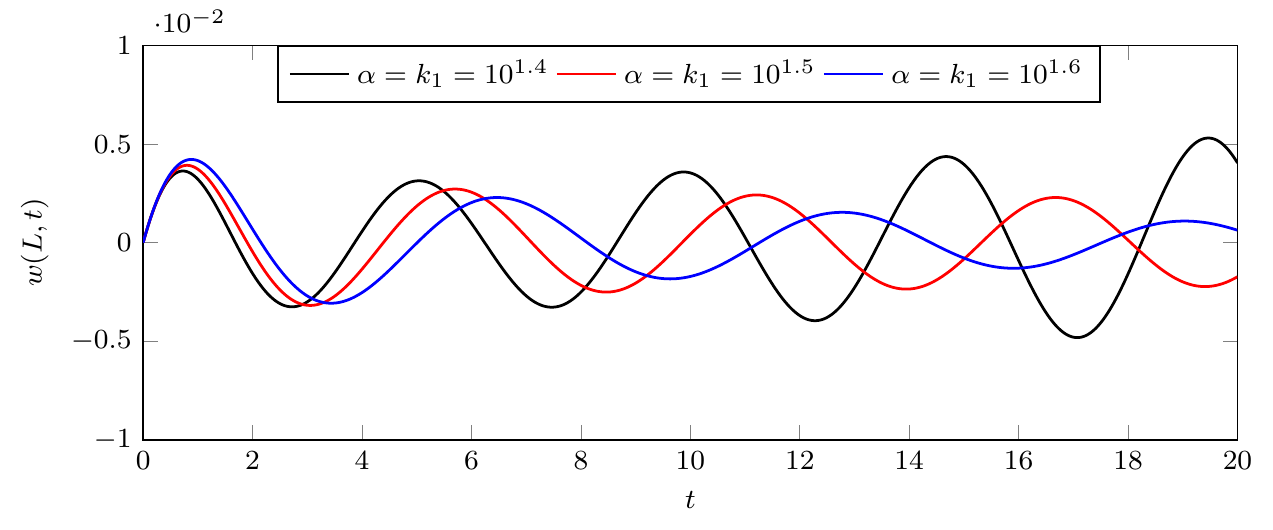}\\
\vspace*{-0.2in}
\caption{Plot of $w(L,t)$ for varying $\alpha=k_1$, $b_1=b_2=0$,  $U=150$, $k_0=0$.}
\label{fig9}
\end{center}
\end{figure}

\subsection{Qualitative effects of increasing rotational inertia}

To understand the influence of the rotational inertia term $-\alpha \partial_x^2w_{tt}$ by itself on the dynamics of an unstable beam, energies were computed for a wide range of $\alpha$.  Here both damping parameters $k_0$ and $k_1$ are set to zero, $b_2=0$, and $U=150$ (supercritical).  The energy profiles are given in Figure \ref{fig10}.  For values of $\alpha$ up to around $10^{-1}$, the profiles are very similar to those observed above in Figure \ref{fig7} where $k_1=\alpha$.  Energies for $\alpha > 10^{-1}$ are (in general) larger than those in Figure \ref{fig7}, due to the exclusion of the strong damping term $-k_1\partial_x^2w_{t}$ here.
\begin{figure}[htp]
\begin{center}
\includegraphics[width=5in]{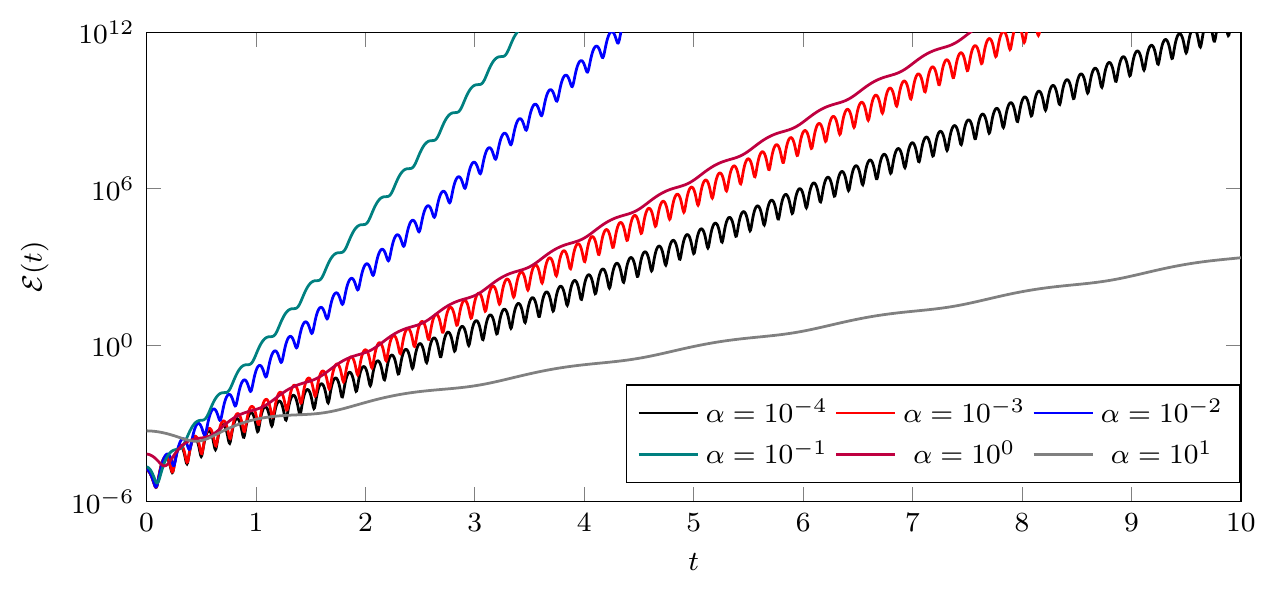}\\
\vspace*{-0.2in}
\caption{Plot of $\mathcal{E}(t)$ for  varying $\alpha$, $b_2=0$,  $U=150$, $k_0=k_1=0$.}
\label{fig10}
\end{center}
\end{figure}
In Figure \ref{fig15}, the same energies are computed as in Figure \ref{fig7} ($k_0=0, U=150)$  but with $b_2=1$.  
\begin{figure}[htp]
\begin{center}
\includegraphics[width=5in]{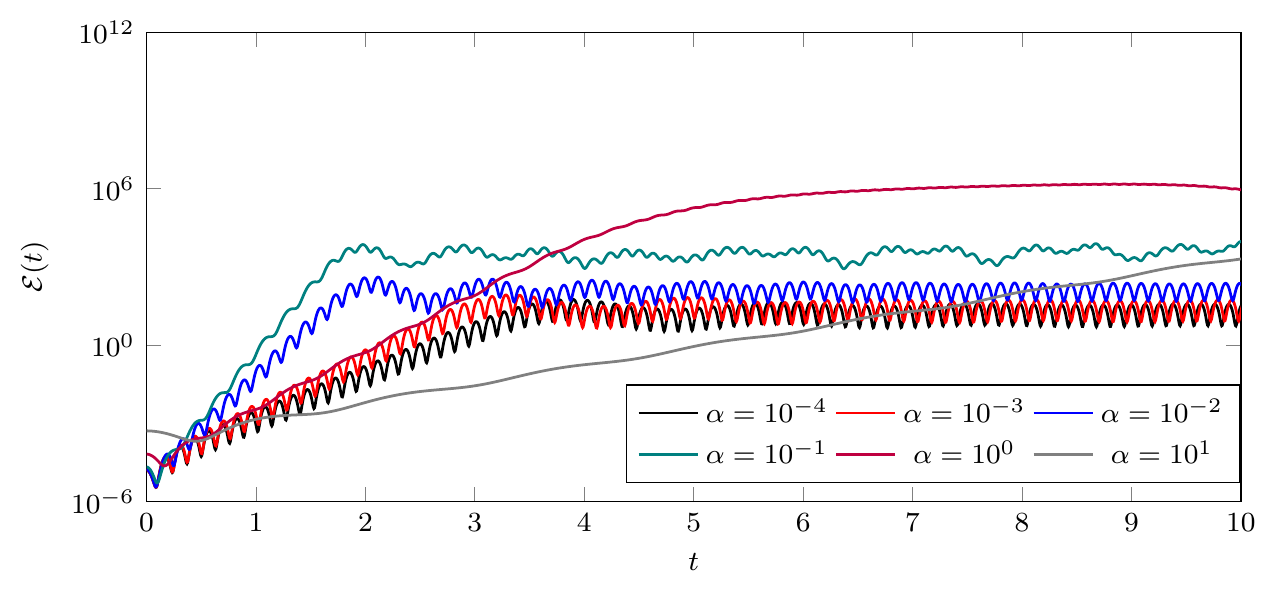}\\
\vspace*{-0.2in}
\caption{Plot of $\mathcal{E}(t)$ for varying $\alpha=k_1$, $b_1=0$, $b_2=1$,  $U=150$, $k_0=0$.}
\label{fig15}
\end{center}
\end{figure}
In these simulations, it appears that the size of $\alpha$ impacts the periodicity of the nonlinear energy in the nonlinear regime (LCO).

In Figure \ref{fig3}, the plot shows the effect of including rotational inertia in the model \eqref{k1} on trajectories of a supercritical flow velocity of $U=150$.    For $b_2=1$ and $k_1=0$, the rotational inertia delays the stabilizing effect of the nonlinearity, but note that the amplitude of the energy oscillations is somewhat smaller than without rotational inertia.  As expected via \eqref{def:energy}, the inclusion of rotational inertia increases the overall profile of $\mathcal{E}(t)$.
\begin{figure}[htp]
\begin{center}
\includegraphics{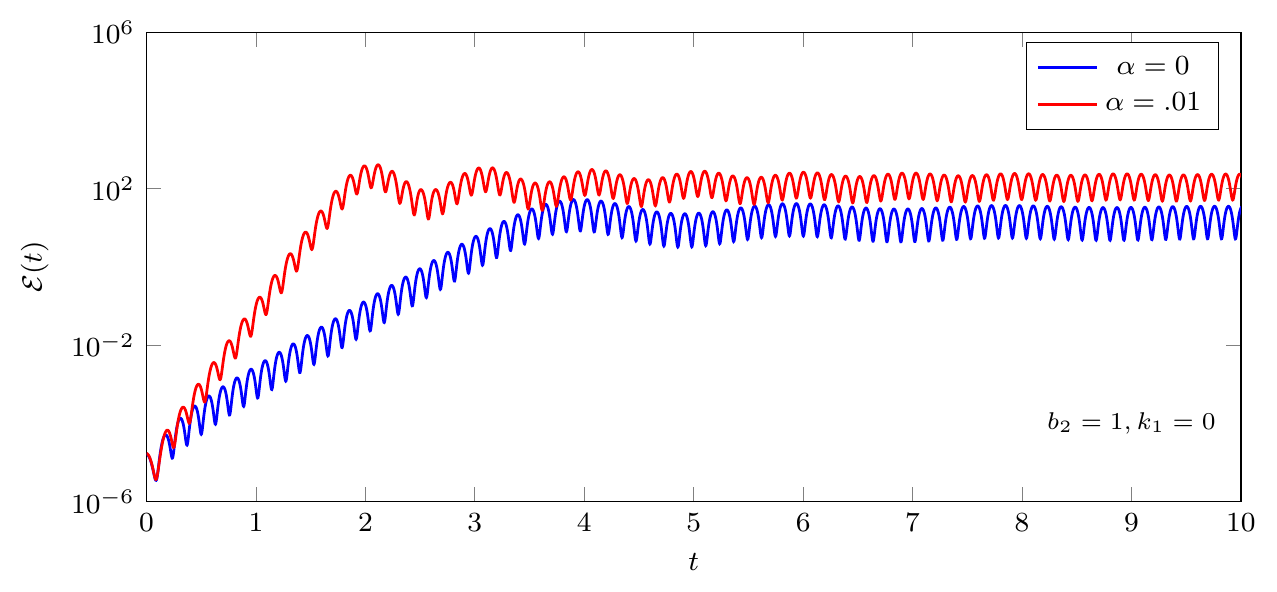}
\\
\vspace*{-0.2in}
\caption{Plot of $\mathcal{E}(t)$ for $k_0=k_1=0$, $U=150$, $b_2=1$, varying $\alpha$.}
\label{fig3}
\end{center}
\end{figure}
The effect of including rotational inertia can also be observed when examining $w_t(L)$ for a particular trajectory.  In Figure \ref{fig4}, plots of the velocity at the beam endpoint ($x=L$) are given for the same cases in Figure \ref{fig3}. 
\begin{figure}[htp]
\begin{center}
\includegraphics{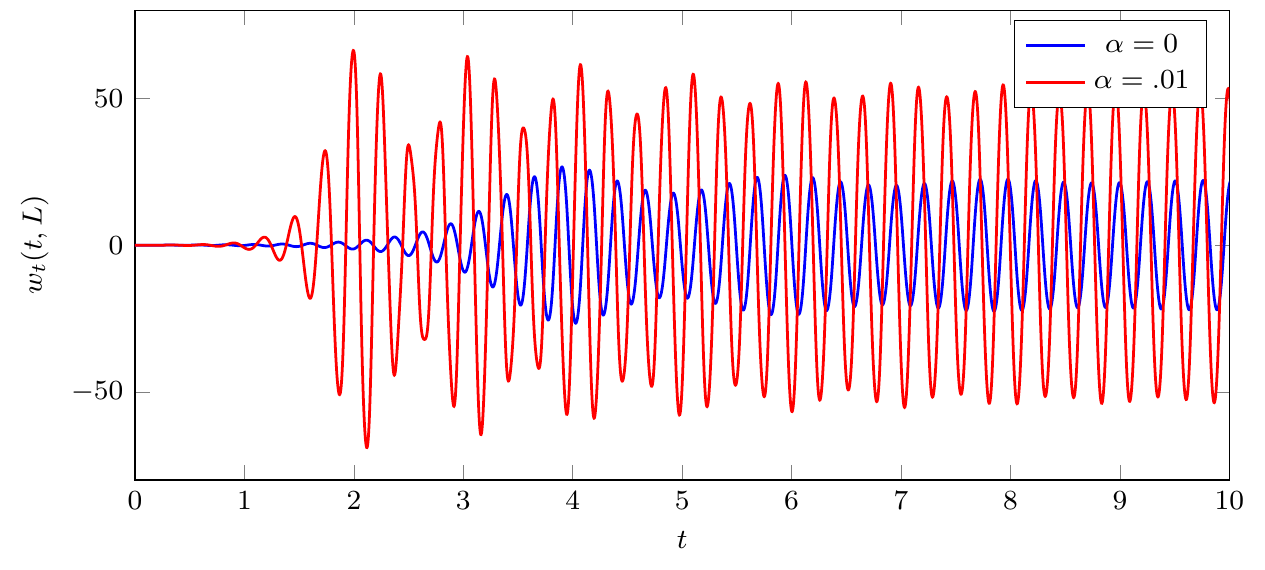}
\\
\vspace*{-0.2in}
\caption{Plot of $w_t(L,t)$ for $k_0=k_1=0$, $U=150$, $b_2=1$, varying $\alpha$.}
\label{fig4}
\end{center}
\end{figure}
\subsection{Comparative effects of strong and viscous damping}

The two damping coefficients $k_0$ (viscous) and $k_1$ (strong) in \eqref{k1} influence the stability of trajectories in different ways (see Remark \ref{sqrt}). The quantity $k=k_0+\beta$ can be considered as a total viscous damping parameter for \eqref{k1};  $\beta=1$ represents the scaling of the frictional damping due to the piston-theoretic term (that is, due to flow effects), and $k_0$ represents the material/imposed damping.  Changes in the total damping are effected via changes in $k_0$ and/or $k_1$.

In Figure \ref{fig5}, computed energies are plotted for several different choices of  strong damping coefficient $k_1$ with $k_0=0$ at (the unstable) $U=150$.  No nonlinear effects are considered ($b_2=0$), and $\alpha$ is fixed at $10^{-3}$.  Note that trajectories become {\em stable} as $k_1$ increases, with a ``critical'' $k_1$ somewhere near $2$ at these parameter values. 
\begin{figure}[htp]
\begin{center}
\includegraphics[width=5in]{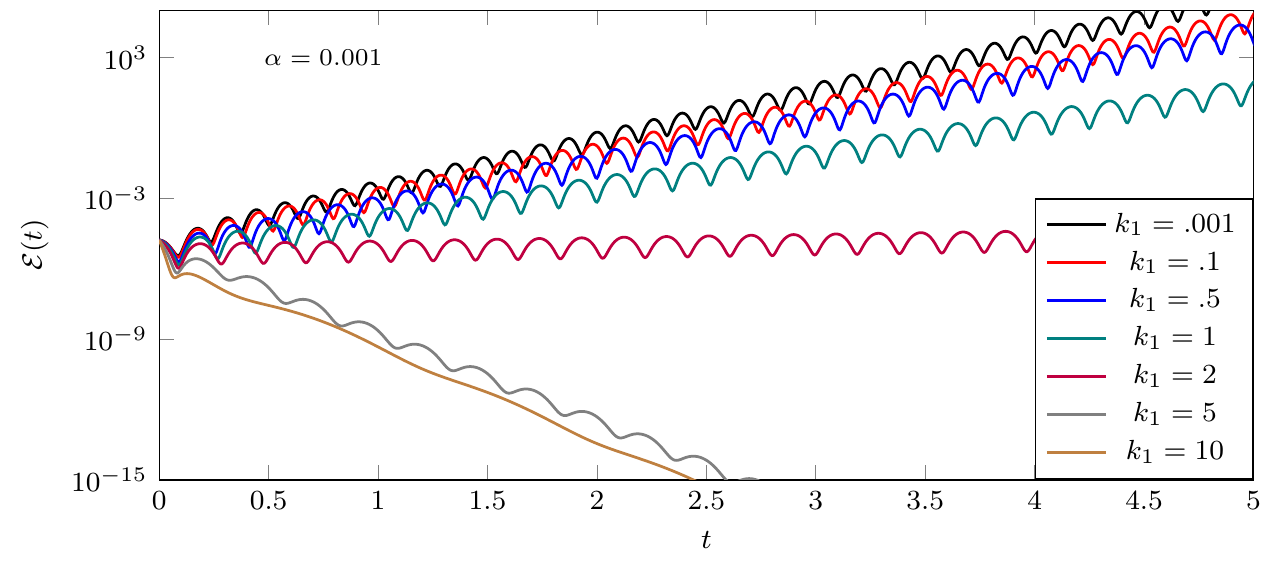}\\
\vspace*{-0.2in}
\caption{Plot of $\mathcal{E}(t)$ for $\alpha=10^{-3}$, $b_2=0$,  $U=150$, $k_0=0$, varying $k_1$.}
\label{fig5}
\end{center}
\end{figure}

In Figure \ref{fig6}, computed energies are plotted for several different choices of  viscous damping coefficient $k_0$, with $k_1=0$, also at $U=150$.  Again, $b_2=0$ is considered and the rotational inertia parameter $\alpha=10^{-3}$.  Note that a ``critical'' value of $k_0$ occurs near $5$, suggesting that {\em even in the presence of rotational inertia, it is possible to stabilize energies via viscous damping alone.}
\begin{figure}[htp]
\begin{center}
\includegraphics[width=5in]{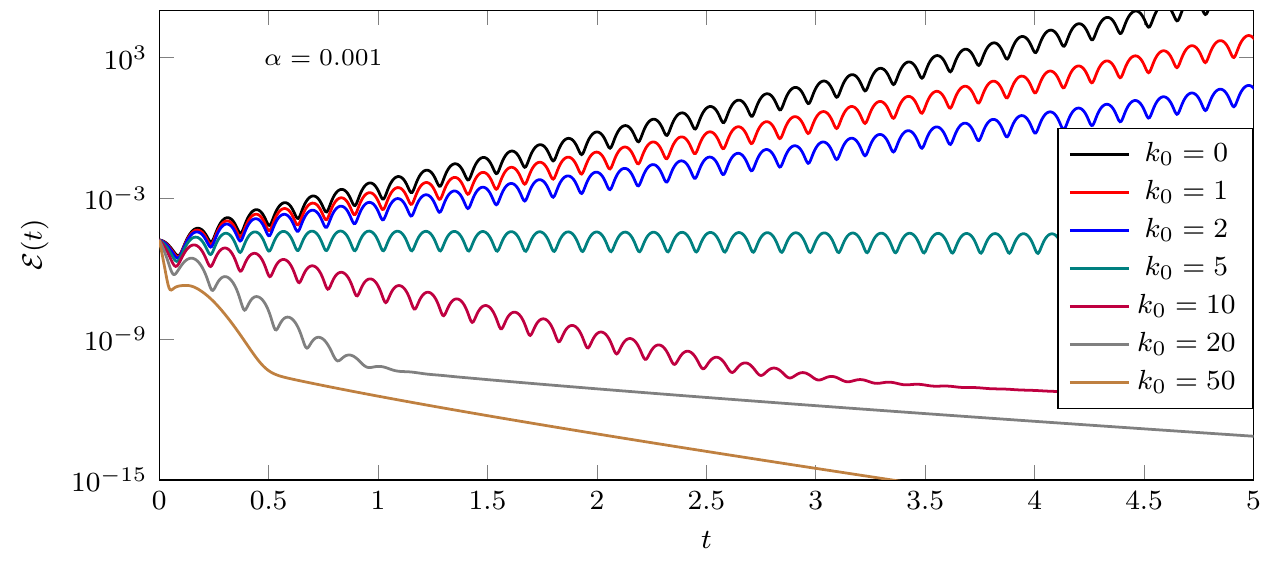}\\
\vspace*{-0.2in}
\caption{Plot of $\mathcal{E}(t)$ for $\alpha=10^{-3}$, $b_2=0$,  $U=150$, $k_1=0$, varying $k_0$. }
\label{fig6}
\end{center}
\end{figure}

The sensitivity of the dynamics to the damping parameters can be seen by noting the relative rate of initial decay or growth of energy as $k_0$ or $k_1$ increases (with other parameters fixed), and the relative sizes of the damping parameters and their impact on stability. On a trajectory by trajectory basis for a fluttering beam, one can see the effect of the damping parameters $k_0$ and $k_1$ on the LCO, as well as on the decay of the transient dynamics---Figures \ref{fig19}--\ref{fig20}.
\begin{figure}[h!]
\centering
\includegraphics{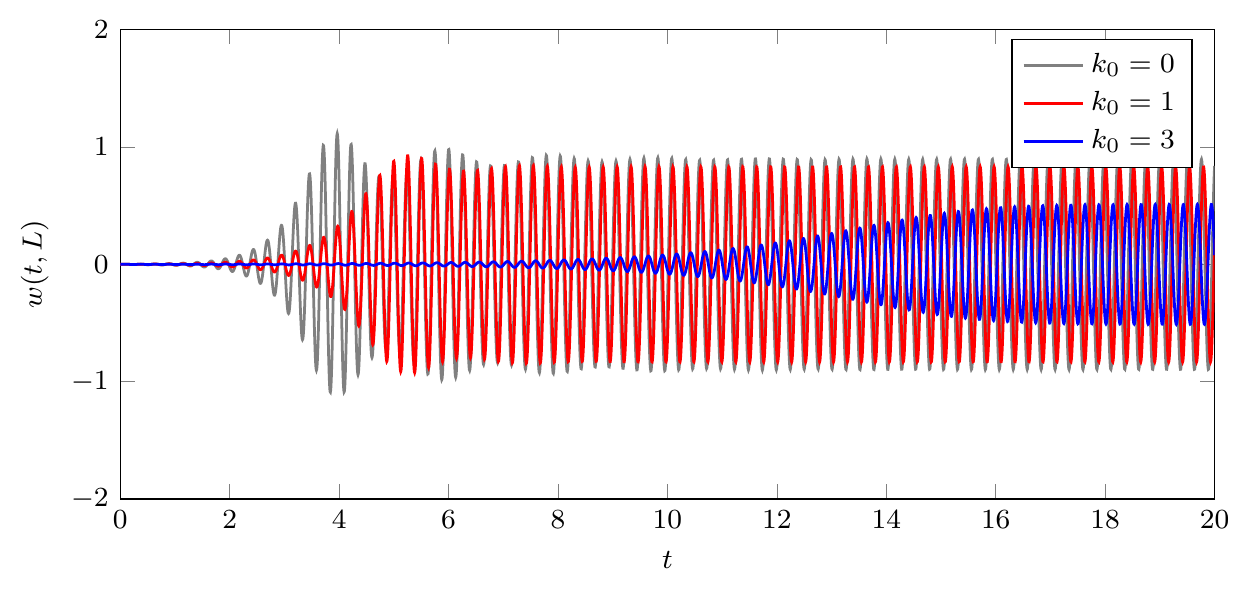} 
\caption{Plot of $w$ at beam endpoint for $U=150$, $b_2=1$, $\alpha=0$, $k_1=0$, varying $k_0$.}
\label{fig19}
\end{figure}
\begin{figure}[h!]
\centering
\includegraphics{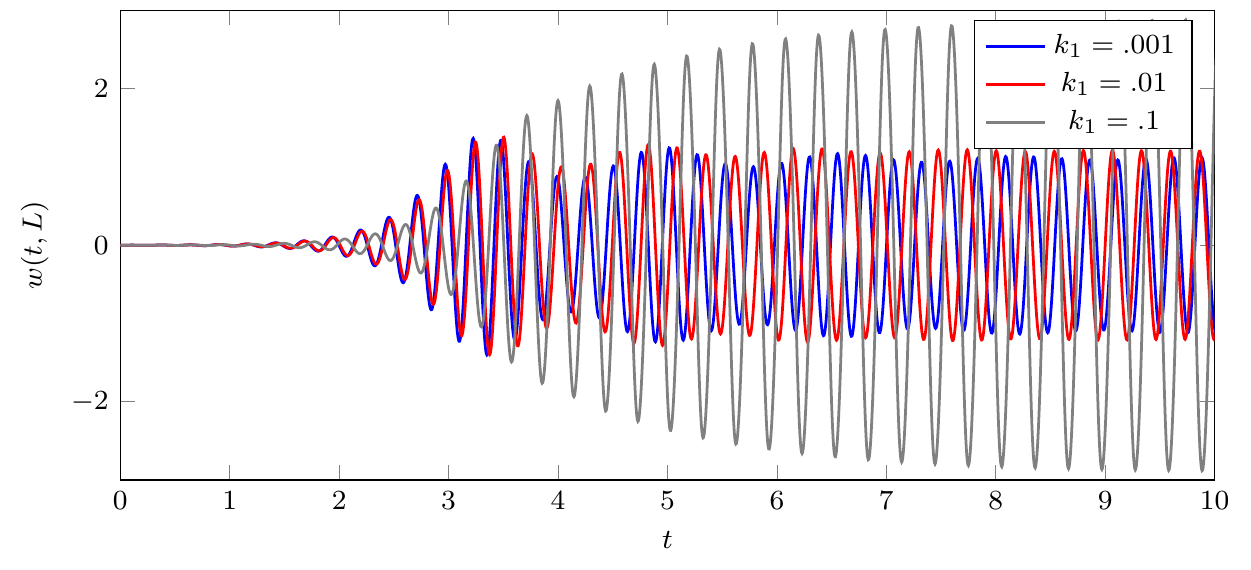}
\caption{Plot of $w$ at beam endpoint for $U=150$, $b_2=1$, $\alpha=.001$, $k_0=0$, varying $k_1$.}
\label{fig20}
\end{figure}
\begin{remark} We conclude with a remark about the somewhat peculiar case of $\alpha=0$ and $k_1>0$. In this case we do not include rotational inertia effects, but {\em do} include strong (or ``square root" type) damping. Preliminary simulations for an unstable configuration (e.g., $U=150$) for the linear beam indicate that increasing the strong damping coefficient $k_1$ in this case has a much different effect on {\em stability} properties of the dynamics than increasing viscous damping coefficient $k_0$. In particular, two separate effects are observed: (i) with all other parameters fixed, relatively small values of $k_1$ will result in stabilization of the dynamics, in contrast to utilizing $k_0$ for stability, which on its own must be large outright to yield a stabilizing effect;  (ii) however, as $k_1$ increases, the so called {\em margin of stability} approaches zero, which is not observed for larger values of $k_0$. In general, the margin of stability for large $k_0$ is much greater than the corresponding margin for large $k_1$.
\end{remark}

\subsection{Synopsis of main numerical observations}
In this section we briefly provide a synopsis of the most pertinent conclusions/observations from the simulations presented above
\begin{itemize}
\item The flutter point $U_{\text{crit}}$ decreases as $\alpha\ge 0$ increases. This is in line with the general notion that $\alpha>0$ is a destabilizing presence for the in vacuo beam. 
\item The emergence of instability via $U>\Ucrit$ is only marginally affected by the presence of the Berger nonlinearity. In this way, we may claim that the onset of flutter is a purely linear phenomenon, regarding the manner in which $Uw_x$ perturbs the in vacuo eigenmodes of the system; however, to view the flutter dynamics, one must include nonlinear restoring force ($b_2>0$).
\item For fixed intrinsic parameters, we typically observe convergence to the ``same" LCO, across various initial configurations.
\item The nonlinearity considered here (with $b_2>0$ any size) is ``strong" enough to bound trajectories for all values of $\alpha \ge 0$. The overall bound, as the transient dynamics decay, seems uniform in the initial data, and the magnitude of the non-transient behavior (typically an LCO) depends on the intrinsic parameters, such as $b_2, U$, etc.
\item Stable and unstable nonlinear dynamics seem to converge as $\alpha \searrow 0$. There are no apparent issues with the $\alpha=0$ dynamics, though as discussed in the theoretical part of the treatment, we cannot prove that a proper dynamical system exists in this case, nor can we show the existence of a compact global attractor encapsulating the flutter behavior. 
\item Viscous damping of the form $k_0w_t$ (for $k_0$ sufficiently large) is ``strong" enough to prevent/stabilize flutter. For a fixed $U>\Ucrit$, increasing $k_0$ provides two regimes: initially, it increases the rate of convergence to a single stable LCO, but, for $k_0$ sufficiently large, the flutter dynamics are eventually damped out. 

Strong damping, via the $k_1$ coefficient, stabilizes in the same manner as described above for $k_0$ {\em so long as $\alpha>0$}. 
\item When $\alpha=k_1$ are scaled up together, there is a competition: $\alpha>0$ is destabilizing for the dynamics, but eventually $k_1$ takes over and re-stabilizes the dynamics. 
\end{itemize}

\section{Acknowledgments}
D. Toundykov's research was partially supported by the National Science Foundation with grant NSF-DMS-1616425.
 J.T. Webster's research was partially supported by the National Science Foundation with grant NSF-DMS-1504697.

\section{Appendix: Long-time behavior of dynamical systems}
 Let $(H,S_t)$ be a dynamical system on a complete metric space $H$.    $(H,S_t)$ is said to be (ultimately) dissipative iff it
possesses a bounded absorbing set $\mathcal B$. This is to say
that for any bounded set $D$, there is a time $t_D$ so that
$S_{t_D}(D)\subset \mathcal B$. 

We say that a dynamical system is
\textit{asymptotically compact} if there exists a compact set $K$
which is uniformly attracting: for any bounded set $ D\subset
H$ we have that $$\displaystyle~
\lim_{t\to+\infty}d_{{H}}\{S_t D|K\}=0$$ 
in the sense of
the Hausdorff semidistance. 
$(H,S_t)$ is said to be
\textit{asymptotically smooth} if for any bounded, forward
invariant $(t>0) $ set $D$ there exists a compact set $K \subset
\overline{D}$ which is uniformly attracting (as above).
\begin{theorem}[Proposition 7.1.4 \cite{springer}]\label{compact}
For a dissipative dynamical system $(H,S_t)$, asymptotic smoothness and asymptotic compactness are equivalent. 
\end{theorem}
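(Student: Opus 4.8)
The plan is to prove the two implications separately; one direction is essentially soft, while the converse is where dissipativity enters. Throughout, for a compact set $K$ I will say $K$ \emph{uniformly attracts} a set $D$ if $d_H\{S_tD|K\}:=\sup_{y\in D}\dist(S_ty,K)\to 0$ as $t\to\infty$, and I will use the $\omega$-limit set $\omega(D):=\bigcap_{s\geq 0}\overline{\bigcup_{t\geq s}S_tD}$.

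First I would dispatch the implication \emph{asymptotically compact $\Rightarrow$ asymptotically smooth}, which does not even require dissipativity. Let $K_0$ be the compact set uniformly attracting all bounded sets, and let $D$ be bounded and forward-invariant. Forward invariance gives $\bigcup_{t\geq 0}S_tD\subset D$, hence $\omega(D)\subset\overline{D}$. Since $K_0$ attracts $D$, any sequence $S_{t_n}y_n$ with $y_n\in D$ and $t_n\to\infty$ is eventually arbitrarily close to the compact $K_0$, so it has a convergent subsequence whose limit lies in $\omega(D)$; this shows $\omega(D)$ is nonempty, and the same reasoning gives $\omega(D)\subset K_0$, so $\omega(D)$ is compact. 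If $\omega(D)$ did not uniformly attract $D$ one could extract a sequence $S_{t_n}y_n$ staying a fixed distance from $\omega(D)$ while admitting a subsequence converging into $\omega(D)$ --- a contradiction. Hence $K:=\omega(D)\subset\overline{D}$ witnesses asymptotic smoothness.

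For the converse, assume the system is dissipative and asymptotically smooth, and let $\mathcal B_0$ be a bounded absorbing set. Applying the absorption property to $\mathcal B_0$ itself yields $t_0>0$ with $S_t\mathcal B_0\subset\mathcal B_0$ for all $t\geq t_0$; I then set $\mathcal B:=\overline{\bigcup_{t\geq t_0}S_t\mathcal B_0}\subset\overline{\mathcal B_0}$, which is bounded. It is forward-invariant: by continuity of each $S_s$, $S_s\mathcal B\subset\overline{\bigcup_{t\geq t_0+s}S_t\mathcal B_0}\subset\mathcal B$. It is absorbing: if $S_tD\subset\mathcal B_0$ for $t\geq t_D$, then $S_{t+t_0}D=S_{t_0}(S_tD)\subset S_{t_0}\mathcal B_0\subset\mathcal B$. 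Asymptotic smoothness applied to the bounded forward-invariant set $\mathcal B$ then produces a compact $K\subset\overline{\mathcal B}$ uniformly attracting $\mathcal B$. Finally, given any bounded $D$, choose $t_D$ with $S_{t_D}D\subset\mathcal B$; then $d_H\{S_tD|K\}=d_H\{S_{t-t_D}(S_{t_D}D)|K\}\leq d_H\{S_{t-t_D}\mathcal B|K\}\to 0$, so $K$ uniformly attracts every bounded set and the system is asymptotically compact.

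The only step demanding genuine care is the assertion, invoked in the first implication, that asymptotic compactness forces $\omega(D)$ to be nonempty, compact, and attracting for forward-invariant $D$: this rests on the precompactness of trajectory tails coming from their eventual proximity to the universal attracting compact $K_0$, combined with the standard extraction-and-contradiction argument sketched above. Since the statement is quoted verbatim as Proposition 7.1.4 of \cite{springer}, I would either cite that reference for this lemma or include the short argument above in full.
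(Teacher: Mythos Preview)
The paper does not supply its own proof of this statement; it is simply quoted in the appendix as Proposition 7.1.4 of \cite{springer}. Your argument is correct and is essentially the standard one found in that reference: the forward implication builds the local attracting compact as $\omega(D)$, using the universal compact $K_0$ to furnish precompactness of trajectory tails, while the reverse implication upgrades the absorbing set to a forward-invariant absorbing set and then invokes asymptotic smoothness once. One minor caveat worth noting explicitly: your construction in the second implication uses that absorption holds for all $t\geq t_D$ (the appendix literally writes only $S_{t_D}(D)\subset\mathcal B$) and that each $S_s$ is continuous on $H$; both are part of the standard hypotheses in \cite{springer} and hold for the semigroup flow constructed in this paper, so the argument goes through.
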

A \textit{global attractor} $A\subset H$ is a closed, bounded set in $H$ which is (fully) invariant (i.e. $S_tA={A}$ for all $
t>0 $) and uniformly attracting (as defined above). 

\begin{remark} Since we are considering a dynamics that are inherently non-gradient, we do not (here) discuss the set of stationary points, strict Lyapunov functions, or certain characterizations available for dynamical systems that are gradient (see \cite[Chaper 7]{springer}). \end{remark}

\begin{theorem}[Theorem 7.2.3 \cite{springer}]\label{dissmooth}
Let $(H,S_t)$ be an dissipative dynamical system in a complete
metric space  $H$. Then $(H,S_t)$ possesses a compact
global attractor ${A} $ if and only if $(H,S_t)$ is
asymptotically smooth.
\end{theorem}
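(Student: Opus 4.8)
The plan is to prove the two implications separately, with a single lemma on $\omega$-limit sets carrying the bulk of the work in both directions. Throughout, for a set $D\subset H$ I write $\omega(D)=\bigcap_{s\ge 0}\overline{\bigcup_{t\ge s}S_tD}$, so that $y\in\omega(D)$ exactly when $y=\lim_k S_{t_k}x_k$ for some $t_k\to\infty$ and $x_k\in D$; here $\dist(X,K)$ denotes the Hausdorff semidistance from $X$ to $K$. The technical core I would isolate is the following claim: \emph{if $D$ is bounded and forward invariant and there is a compact set $K\subset H$ with $\dist(S_tD,K)\to 0$ as $t\to\infty$, then $\omega(D)$ is nonempty, compact, contained in $K\cap\overline D$, fully invariant ($S_t\omega(D)=\omega(D)$ for all $t\ge 0$), and satisfies $\dist(S_tD,\omega(D))\to 0$.}

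To prove the claim I would argue as follows. Nonemptiness and the inclusion $\omega(D)\subset K$ come from extracting, out of any sequence $S_{t_k}x_k$ with $t_k\to\infty$, a subsequence converging into $K$ — possible because these points eventually lie in arbitrarily small neighborhoods of the compact set $K$; compactness then follows since $\omega(D)$ is a closed subset of $K$, while $\omega(D)\subset\overline D$ is immediate from forward invariance. The attraction property is obtained by contradiction: a sequence $S_{t_k}x_k$ staying a distance $\ge\varepsilon$ from $\omega(D)$ would, after extraction using $K$, converge to a point of $\omega(D)$. The inclusion $S_t\omega(D)\subset\omega(D)$ uses continuity of $S_t$; the reverse inclusion is the delicate point and needs a backward-orbit construction: given $y=\lim S_{t_k}x_k\in\omega(D)$ and fixed $t\ge 0$, one looks at $S_{t_k-t}x_k$ for large $k$, extracts a limit $z$ (again using precompactness supplied by $K$), checks $z\in\omega(D)$, and verifies $S_tz=y$ by continuity. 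I expect this last step — full invariance — to be the main obstacle.

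Granting the claim, the forward implication is immediate. If $A$ is a compact global attractor, then for every nonempty bounded forward-invariant $D$ the compact set $A$ satisfies $\dist(S_tD,A)\to 0$, so the claim (with $K=A$) produces a compact set $\omega(D)\subset\overline D$ that uniformly attracts $D$; hence $(H,S_t)$ is asymptotically smooth.

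For the converse, assume $(H,S_t)$ is dissipative with bounded absorbing set $\mathcal{B}$ and is asymptotically smooth. Choosing $t_0$ so that $S_t\mathcal{B}\subset\mathcal{B}$ for $t\ge t_0$, I would first replace $\mathcal{B}$ by $\mathcal{B}_0:=\overline{\bigcup_{t\ge t_0}S_t\mathcal{B}}\subset\overline{\mathcal{B}}$, which by elementary semigroup manipulations is bounded, forward invariant, and still absorbing. Asymptotic smoothness then furnishes a compact $K_0\subset\overline{\mathcal{B}_0}$ with $\dist(S_t\mathcal{B}_0,K_0)\to 0$, so the claim (with $D=\mathcal{B}_0$, $K=K_0$) gives that $A:=\omega(\mathcal{B}_0)$ is nonempty, compact, fully invariant, and uniformly attracts $\mathcal{B}_0$. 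Finally, since $\mathcal{B}_0$ is absorbing, for any bounded $D'$ there is $t'$ with $S_{t'}D'\subset\mathcal{B}_0$, whence $\dist(S_{t'+s}D',A)=\dist\big(S_s(S_{t'}D'),A\big)\le\dist(S_s\mathcal{B}_0,A)\to 0$; thus $A$ uniformly attracts every bounded set. Being compact, fully invariant, and uniformly attracting, $A$ is a compact global attractor (one checks it is moreover the minimal closed uniformly attracting set, hence unique), which completes the proof.
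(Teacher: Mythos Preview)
The paper does not supply its own proof of this theorem: it is stated in the Appendix as a citation of Theorem~7.2.3 from \cite{springer}, with no argument given. So there is nothing in the paper to compare your proposal against.

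That said, your argument is correct and is essentially the standard proof one finds in \cite{springer} and related monographs. The key lemma you isolate --- that for a bounded forward-invariant set $D$ attracted to a compact $K$, the $\omega$-limit set $\omega(D)$ is nonempty, compact, fully invariant, and uniformly attracts $D$ --- is exactly the engine behind the classical construction, and your handling of the delicate full-invariance step via the backward-orbit extraction is the right idea (and does implicitly use continuity of each $S_t$, which is part of the standing definition of a dynamical system in this framework). The replacement of the absorbing set $\mathcal{B}$ by a forward-invariant absorbing set $\mathcal{B}_0$ before invoking asymptotic smoothness is also the standard maneuver. Nothing is missing.
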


For non-gradient systems, this theorem is often the mechanism employed to obtain the existence of a compact global attractor. If one can show that a dissipative dynamical system $(H,S_t)$ is asymptotically smooth, one obtains the existence of a compact global attractor. In many cases, showing asymptotic smoothness can be done conveniently using the criterion due to \cite{kh} and presented in a streamlined way in \cite{springer}.

\begin{theorem}[Theorem 7.1.11 \cite{springer}]
\label{psi} Let $(H,S_t)$ be a dynamical system,
$H$ a Banach space with norm $\|\cdot\|$. Assume that for any
bounded positively invariant set $D \subset H$ and for
all $\epsilon>0$ there exists a $T\equiv T_{\epsilon,D}$ such that
\begin{equation*}
\|S_Ty_1 - S_Ty_2\|_{\mathcal{H}} \le
\epsilon+\Psi_{\epsilon,B,T}(y_1,y_2),~~y_i \in D
\end{equation*}
with $\Psi$ a functional defined on $D \times D$ depending on
$\epsilon, T,$ and $D$ such that
\begin{equation*}
\liminf_m \liminf_n \Psi_{\epsilon,T,D}(x_m,x_n) = 0
\end{equation*}
for every sequence $\{x_n\}\subset D$. Then $(H,S_t)$ is
 asymptotically smooth.
\end{theorem}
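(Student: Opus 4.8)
The plan is to realize the uniformly attracting compact set as the $\om$-limit set of $D$,
\[
  \om(D) = \bigcap_{s\geq 0}\overline{\bigcup_{t\geq s} S_t D},
\]
and to prove that the hypothesis forces $\om(D)$ to be totally bounded. Since $D$ is bounded and positively invariant, $S_tD\subseteq D$ for $t\geq 0$, hence $\om(D)\subseteq\overline D$; so once I know that $\overline{\om(D)}$ is compact and uniformly attracts $D$, the set $K\dfn\overline{\om(D)}$ witnesses asymptotic smoothness.

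The crux will be the claim: \emph{for every $t_n\to+\infty$ and every $\{y_n\}\subset D$ the sequence $\{S_{t_n}y_n\}$ is totally bounded} (hence, $H$ being complete, relatively compact). To prove it I would fix $\eps>0$, take the time $T=T_{\eps,D}$ supplied by the hypothesis, discard the finitely many indices with $t_n<T$, and write $S_{t_n}y_n=S_T\eta_n$ with $\eta_n\dfn S_{t_n-T}y_n\in D$ (by positive invariance). The hypothesis then gives $\|S_T\eta_i-S_T\eta_j\|\leq\eps+\Psi_{\eps,T,D}(\eta_i,\eta_j)$ for all $i,j$. The key observation is that the property $\liminf_m\liminf_n\Psi_{\eps,T,D}(x_m,x_n)=0$ is inherited by \emph{every} subsequence of $\{\eta_n\}$, since each such subsequence is again a sequence in $D$; consequently any infinite set of indices $I$ contains distinct $i,j\in I$ with $\Psi_{\eps,T,D}(\eta_i,\eta_j)<\eps$, whence $\|S_{t_i}y_i-S_{t_j}y_j\|<2\eps$. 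Thus $\{S_{t_n}y_n\}$ has no infinite $2\eps$-separated subset, and since $\eps>0$ is arbitrary it is totally bounded.

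Granting the claim, the remaining steps are routine. Taking $y_n\equiv y\in D$ and $t_n=n$ shows every orbit in $D$ has a convergent subsequence, so $\om(D)\neq\emptyset$; applying the claim together with a standard diagonal extraction to sequences drawn from $\bigcup_{t\geq s}S_tD$ shows $\om(D)$ is totally bounded, so $\overline{\om(D)}$ is compact. For uniform attraction I would argue by contradiction: were it to fail there would be $\delta>0$, $t_n\to\infty$, $y_n\in D$ with $\dist(S_{t_n}y_n,\om(D))\geq\delta$, but the claim yields a subsequence $S_{t_{n_k}}y_{n_k}\to\zeta$, and by construction $\zeta\in\om(D)$, contradicting the separation. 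Hence $\overline{\om(D)}\subseteq\overline D$ is compact and uniformly attracting, i.e. $(H,S_t)$ is asymptotically smooth. The one genuinely subtle point is that the hypothesis provides only a \emph{single} time $T$ per accuracy $\eps$, with no decay rate, so one should not try to show that $S_tD$ itself becomes precompact; the maneuver that makes the proof work is precisely to pass to $\om(D)$ and to exploit the stability of the double $\liminf$ under taking subsequences. (Equivalently one could phrase everything through the Kuratowski measure of noncompactness $\alpha$, using that $t\mapsto\alpha(S_tD)$ is non-increasing on positively invariant $D$ and that the estimate above gives $\alpha(S_TD)\leq 2\eps$.)
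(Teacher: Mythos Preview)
The paper does not supply its own proof of this theorem; it is quoted verbatim from \cite{springer} (Theorem 7.1.11) in the Appendix as a tool, with no argument given. So there is nothing in the paper to compare against.

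That said, your argument is correct. The crucial step---that the double $\liminf$ hypothesis, being stated for \emph{every} sequence in $D$, passes to subsequences and therefore forbids infinite $2\eps$-separated subsets of $\{S_T\eta_n\}$---is exactly the right observation, and the passage from there to relative compactness of $\{S_{t_n}y_n\}$ and then to compactness and uniform attractivity of $\omega(D)$ is standard and cleanly executed. Your parenthetical remark is in fact closer to how the result is proved in the cited monograph: there one shows directly that the Kuratowski (or ball) measure of noncompactness of $S_TD$ is at most a multiple of $\eps$, whence $\alpha(S_tD)\to 0$ and a general criterion yields asymptotic smoothness. Your main argument avoids invoking the measure-of-noncompactness machinery and is slightly more self-contained; the trade-off is that the Kuratowski route makes the monotonicity in $t$ (via $S_{t+s}D\subset S_tD$ for positively invariant $D$) do more of the work and shortens the endgame.
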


\footnotesize
\bibliographystyle{abbrv} 
\bibliography{flutter.bib}
\end{document}

\end{document}